\theoremstyle{thmstyleone}%
\newtheorem{theorem}{Theorem}%  meant for continuous numbers
\newtheorem{proposition}[theorem]{Proposition}% 
\theoremstyle{thmstyletwo}%
\newtheorem{remark}{Remark}%
\theoremstyle{thmstylethree}%
\newtheorem{definition}{Definition}%
\newtheorem{lemma}[theorem]{Lemma}
\DeclareMathOperator\supp{supp}
\DeclareMathOperator*{\argmax}{arg\,max}
\newcommand{\Vspace}{L^2(\mu,\Omega;\mathbb{R}^n)}
\begin{document}

\title[Article Title]{Average Optimal Control of Uncertain Control-Affine Systems}

%%=============================================================%%
%% GivenName	-> \fnm{Joergen W.}
%% Particle	-> \spfx{van der} -> surname prefix
%% FamilyName	-> \sur{Ploeg}
%% Suffix	-> \sfx{IV}
%% \author*[1,2]{\fnm{Joergen W.} \spfx{van der} \sur{Ploeg} 
%%  \sfx{IV}}\email{iauthor@gmail.com}
%%=============================================================%%

\author[1]{\fnm{M. Soledad} \sur{Aronna}}\email{soledad.aronna@fgv.br}
\equalcont{These authors contributed equally to this work.}

\author*[1]{\fnm{Gabriel} \sur{de Lima Monteiro}}\email{g.delimamonteiro@gmail.com}
\equalcont{These authors contributed equally to this work.}

\author[1]{\fnm{Oscar} \sur{Sierra Fonseca}}\email{oscar.fonseca@fgv.br}
\equalcont{These authors contributed equally to this work.}

\affil[1]{\orgdiv{Escola de Matem\'atica Aplicada}, \orgname{Fundaç\~ao Getulio Vargas}, \orgaddress{\street{Praia de Botafogo 190}, \city{Rio de Janeiro}, \postcode{22250-900}, \state{RJ}, \country{Brazil}}}

%\affil[2]{\orgdiv{Department}, \orgname{Organization}, \orgaddress{\street{Street}, \city{City}, \postcode{10587}, \state{State}, \country{Country}}}

%\affil[3]{\orgdiv{Department}, \orgname{Organization}, \orgaddress{\street{Street}, \city{City}, \postcode{610101}, \state{State}, \country{Country}}}

%%==================================%%
%% Sample for unstructured abstract %%
%%==================================%%

\abstract{This work studies optimal control problems of systems with uncertain, probabilistically distributed parameters to optimize average performance. Known as {\it Riemann-Stieltjes, average,} or {\it ensemble optimal control,} this kind of problem is crucial when parameter uncertainty matters. We derive necessary optimality conditions and characterize feedback controls for control-affine systems. Two scenarios are examined: known initial conditions (finite-dimensional case) and uncertain initial conditions (infinite-dimensional framework). The Pontryagin Maximum Principle is extended using a Hilbert space formulation.}

\keywords{optimal control, Riemann-Stieltjes optimal control problems, uncertain dynamics, necessary conditions.}

%%\pacs[JEL Classification]{D8, H51}

\pacs[MSC Classification]{49K45, 49K27,49J55,49M05,93E03, 65C05}

\maketitle

%\section{Introduction}\label{sec1}
\section{Introduction}
This article studies optimal control problems in which a probability distribution describes the parameters. Our objective is to optimize the control strategy with respect to the average performance.
This class of problems, where the system possesses internal stochasticity, is nowadays known as {\it Riemann-Stieltjes optimal control problems}, {\it average optimal control} or {\it optimal ensemble control problem} (see {\it e.g., }\cite{Zuzua-Loheac,Ross2015,Phelps2016}). Our study focuses on the necessary conditions and the characterization of controls in feedback form for control-affine problems, which appear in several applications \cite{almeida2022optimal,Martinon2009,seyde2021bang} and for which adding uncertainty in the parameters provides significant practical insight.
We consider an optimal control problem where the dynamics involves unknown parameters  within a given metric space. Two cases are studied: first, when the initial data is known, leading to a formulation in a finite-dimensional space, and second, when the initial data depends on uncertain parameters, meaning that the initial state is not fully known, resulting in a formulation within an infinite-dimensional space. More precisely, we are dealing with the following problem
\begin{equation*}
(P) \parbox{.96\textwidth}{
\begin{align}
&\text{minimize} \quad  J[u(\cdot)]:=\int_\Omega g(x(T,\omega),\omega) \, d\mu(\omega), \nonumber \\
&\text{s.t.} \nonumber \\
& \begin{cases}\label{dynamics}
\dot{x}(t,\omega) = f_0(x(t,\omega),\omega) + \displaystyle \sum_{i=1}^m f_i(x(t,\omega),\omega)u_i(t),\\ \hspace{4.2cm}\text{a.e. } t\in [t_0,T], \ \omega\in \Omega, \\
x(t_0,\omega)=\varphi(\omega), & \hspace{-4.1cm} \omega\in \Omega, \\
u(t) \in U(t), & \hspace{-4.1cm}\text{a.e. } t\in [t_0,T],
\end{cases}
\end{align}
}\nonumber
\end{equation*}
where, $(\Omega,\rho_{\Omega})$ is a metric space, $\mu$ a probability measure and   $f_i\colon \mathbb{R}^n\times\Omega\rightarrow\mathbb{R}^n$ for $i=0,\dots ,m\leq n,$ $g 
\colon \mathbb R^n \times \Omega \rightarrow
\mathbb R$ and  $u:[t_0,T]\rightarrow \mathbb{R}^m$ is the control function, which belongs to the {\it admissible controls set}
\begin{equation}\label{Uad}
\mathcal{U}_{\rm ad}[t_0,T] := \left\{ u \in  L^\infty(t_0,T;\mathbb{R}^m): u(t)\in U(t) \quad\text{a.e. } t\in [t_0,T]\right\},
\end{equation}
with  $U :[0,T] \rightsquigarrow \mathbb R^m$ being a multifunction taking non-empty and closed values contained in a compact subset $\textbf{  U}\subset \mathbb{R}^m.$

We investigate necessary optimality condition for problem $(P)$ under two scenarios: {\it Known Initial Condition}, where the initial state $\varphi \equiv x_0 \in \mathbb{R}^n$ is precisely defined, %enabling us to prove the existence of optimal controls, characterize singular arcs for both scalar and commutative vector field cases, and present a version of Pontryagin's Maximum Principle due to Bettiol \& Khalil \cite{Bettiol2019}; 
and {\it Uncertain Initial Condition}, where $\varphi: \Omega \rightarrow \mathbb{R}^n$ is a measurable function, {\it i.e.,} the initial state is uncertain.

Beginning with \cite{Ross2015}, the Riemann-Stieltjes framework found several applications, as in the aerospace \cite{GonzlezArribas2018,Shaffer2016}, military planning \cite{Walton2018} and affine dynamics for reinforcement learning and system identification \cite{PesareMAY2021,pesare2021convergence}. Numerical results were developed in \cite{Lambrianides2020,Phelps2016}, while \cite{Bettiol2019} extended the theory for systems with uncertainty in the dynamics, cost, and for fixed initial conditions, including versions of the Pontryagin Maximum Principle for smooth and non smooth settings.

In \cite{PesareMAY2021,pesare2021convergence}, the authors explored the usefulness of the Riemann-Stieltjes framework for system identification. Specifically, in \cite{pesare2021convergence}, they applied the framework to the Linear Quadratic Regulator with affine control, addressing cases where the dynamics exhibit uncertainty in the linear drift term. In \cite{PesareMAY2021}, they extended the approach to purely nonlinear dynamics. In this work, we study the theory of the affine setting, that is, we consider uncertainty in a purely nonlinear drift term and introduce uncertainty in the controlled part of the dynamics, similar to the initial condition. 

Necessary conditions for parameterized optimal control problems with known initial conditions have been studied in several papers. For example, in \cite{minimaxVinter}, the necessary conditions for minimax problems with a compact metric space parameter set $\Omega$ are derived. The approach involves approximating $\Omega$ with a finite subset $\Omega_N$, analyzing the properties of approximate minimizers for problems defined on $\Omega_N$, and then taking the limit as $N$ increases. In \cite{Bettiol2019}, the authors deduce first order necessary optimality conditions (in the form of a Pontryagin Maximum Principle) for problem $(P)$ when the uncertainty dynamics is non-linear. 

The problem $(P)$ with an uncertain initial condition was previously investigated in \cite{Zuzua-Loheac,Phelps2016,Scagliotti2023}.  
In \cite{Zuzua-Loheac} it is addressed as a penalized optimization problem to tackle {\it approximate simultaneous controllability} (also referred to as {\it ensemble controllability}) in a weighted $L^2$-space. The authors consider linear uncertain dynamics, {\it i.e.,} $$f_0(x,\omega) = A(\omega)x \quad\text{and} \quad\sum f_i(x,\omega)u_i(t)=B(\omega)u(t),$$  where for every $\omega \in \Omega$, $A(\omega) \in \mathcal{L}(\mathbb{R}^n)$ and $B(\omega) \in \mathcal{L}(\mathbb{R}^m,\mathbb{R}^n)$, and a penalized cost given by 
{\small \begin{equation*}
g(x(T,\omega),\omega)=\frac{1}{2\,%\mu(\Omega)
}\int_0^T |u(t)|_{\mathbb{R}^m}^2 \, dt + k  |x(T,\omega) - \psi(\omega)|_{\mathbb{R}^n}^2  \quad k \geq 0,
\end{equation*}}
with the final constraint
$
\int_\Omega x(T,\omega) \, d\mu(\omega) = \int_\Omega \psi(\omega) \, d\mu(\omega),
$
where $x(\cdot, \omega)$ is the solution of \eqref{dynamics} with control $u$ and initial condition $\varphi(\omega)$. In \cite{Phelps2016}, the authors termed this type of problem an {\it uncertain optimal control problem} (UOCP). They considered uncertain initial conditions of the form $\varphi(\omega) = \xi + l(\omega)$, where the goal is to find a pair $(u, \xi)$ that minimizes the cost functional $J$ in problem $(P)$. They derived necessary conditions for both the original and approximate problems, demonstrating that the approximations using sample averages are consistent as defined by Polak \cite{Polak}. This guarantees that the global minimizers of the approximate problems converge to those of the original problem, and through numerical examples, the authors show that their framework facilitates solving both optimal search and optimal ensemble control problems. In \cite{Scagliotti2023} it is considered $\Omega$ as a compact subset of a Euclidean space, and $\mu$ is a Borel probability measure. The approach is similar to that used in \cite{minimaxVinter}; the author addresses the optimal control problem of affine control system ensembles, establishing convergence for approximating infinite ensembles with finite sub-ensembles. The author derive a Maximum Principle for ensemble control and propose numerical algorithms. These algorithms are validated through simulations on 2D linear system ensembles, demonstrating their effectiveness.

This work lies within the established literature on optimal control under uncertainty, extending prior contributions in key directions. We investigate the existence of optimal controls for problem $(P)$ and derive the PMP for both known and uncertain initial conditions. For the known initial condition case (Section \ref{KIC}), our analysis focuses on the general PMP framework of \cite{Bettiol2019} to the class of control-affine systems. This structural assumption enables a concrete characterization of optimal controls, leading to explicit formulas that facilitate a detailed analysis of singular arcs in Section \ref{char}. Our main contribution, however, addresses the uncertain initial condition case (Section \ref{UIC}), where we derive a PMP within an infinite-dimensional Hilbert space setting. Our approach treats the initial condition as an element of this space, meaning the state $x(t, \cdot)$ is also Hilbert-space valued for each $t$. Crucially, this derivation does not rely on a linear semigroup structure, a common restriction in works like \cite{Frankowska1992} and \cite{liYong}, thereby significantly expanding the theory's applicability to purely nonlinear dynamics. The resulting PMP enables us to characterize singular arcs for both scalar and vector-valued controls with commuting vector fields. The special case with known initial condition and scalar-valued control was concisely presented in \cite{AronnaMonteiroSierra2025CDC}. \\ \\ The present work is organized as follows. Section \ref{KIC} studies the problem for known initial conditions, addressing the continuity of trajectories, existence of optimal controls, and revising the necessary conditions from \cite{Bettiol2019} for the control-affine case. Section \ref{UIC} is dedicated to the case of unknown initial conditions, where we discuss the existence of optimal controls and derive the Pontryagin maximum principle for the system. Section \ref{char} deals with the characterization of singular arcs for both scalar control and vector-valued controls with commutative vector fields. Finally, Section \ref{NUM_EXMP} is devoted to numerical implementation, where we first comment on convergence and then present numerical experiments on two examples that validate our theoretical results and prove their practical relevance, leveraging the sample average approximation framework of \cite{Phelps2016}.

\section{Preliminaries}\label{Preliminaries}
We let $\mathcal B^n$ denote the Borel subsets of $\mathbb R^n.$ We write $\mathcal B$- and $\mathcal L$- to refer to Borel- and Lebesgue-measurable functions, respectively. The support
of a measure $\mu$ defined on $\Omega$ is written $\supp(\mu).$ If $X$ is a Banach space, %with norm $\|\cdot\|$
we will denote $\mathbb{B}$ as the open unit ball centered at the origin of $X$. By $L^p(0,T;X)$ we mean the Lebesgue space with domain equal to the interval $[0,T]\subset \mathbb{R}$ and with values in $X.$ The notation $W^{q,s}([0,T];X)$ refers to the Sobolev spaces (see {\it e.g.} Adams \cite{adams1975}). For $q=r=1,$ we let the corresponding norm be $\|x\|_{W^{1,1}}:= \|x\|_{1} + \|\dot x\|_{1}$. According to \cite{evans2010}, for any $ x \in W^{1, p}([0, T]; X) $, there exists a continuous function $ y \in C([0, T]; X) $ that represents $ x $, meaning that $ x(t) = y(t) $ for almost every $ t \in [s, T] $.

The graph of a function $f$ is written as ${\rm Gr } f$.
A function $\theta:[0,\infty)\rightarrow[0,\infty)$ that is increasing and such that $\lim_{s\rightarrow 0^+} \theta(s)=0$ is called {\it modulus of continuity}.
Given two smooth vector fields $ f, g: \mathbb{R}^n \rightarrow \mathbb{R}^n $, the {\it Lie bracket} $[f, g]$ is a new vector field defined as:
$$
[f, g] := g'f - f'g.
$$
%$$[f, g] = (Dg) \cdot f - (Df) \cdot g = \sum_{j=1}^n \left( f_j \frac{\partial g_i}{\partial x_j} - g_j \frac{\partial f_i}{\partial x_j} \right).$$
%where, for simplicity, we slightly abuse notation by writing $f'$ and  $g'$ to denote the Jacobian matrices of  $f$ and $g$, respectively.
 We will adopt the following set of assumptions, which we refer to hereafter with \textbf{ (H0)}:
\begin{itemize}
\item[(i)] $\mu$ is a probability measure defined over a complete separable metric space   $(\Omega,\rho_\Omega)$ and, for a.e $t\in [t_0,T],$ $U(t)$ is a non-empty, compact and convex subset of $\mathbb R^m$ and its graph  $Gr U(\cdot)$  is a $ \mathcal{L}\times\mathcal{B}^m$-measurable set.
\item [(ii)] For every  $i=0,1,...,m,$ there exist constants $c_i,k_i>0$  such that, for every  $x,x'\in \mathbb{R}^n,$ $\omega\in \Omega,$  $f_i$  satisfies
\begin{equation}
\label{fbounded}
|f_i(x,\omega)| \leq c_i  (1+|x|)
\end{equation}
and
\begin{equation}
\label{fLipschitz}
|f_i(x,\omega)-f_i(x',\omega)|\leq k_i|x-x'|, \quad \text{for all} \quad i=0,1,...,m.
\end{equation}
\item [(iii)] The function $g:\mathbb{R}^n\times\Omega\rightarrow \mathbb{R}$ is $\mathcal{B}^n\times\mathcal{B}_\Omega$-measurable and there exist positive constants $k_g\geq 1$ and $M$ such that, for all  $x,x'\in \mathbb{R}^n,$ $\omega\in\Omega,$
\begin{equation*}
\begin{aligned}
 |g(x,\omega)|&\leq M, 
\\
    |g(x,\omega)-g(x',\omega)| &\leq k_g|x-x'|.
    \end{aligned}
    \end{equation*}
\end{itemize}

An \textit{admissible process} $(u, \{x(\cdot,\omega) : \omega\in\Omega\})$ consists of a control function $u$ belonging to $\mathcal{U}_{ad}[t_0,T]$, coupled with its associated family of arcs $\{x(\cdot,\omega)\in W^{1,1}([0,T], \mathbb R^n):\omega\in\Omega\}$ such that, for each $\omega\in\Omega$,  $x(\cdot,\omega)$ is solution of the following evolution integral equation
\begin{equation}\label{trajectory}
\begin{aligned}
x(t,\omega)%\varphi(\cdot)+\int_{t_0}^{t} f(\sigma,x(\sigma,\cdot),u(\sigma))\,d\sigma
=\varphi(\omega)+\int_{t_0}^{t} f_0(x(\sigma,\omega),\omega) + \sum_{i=1}^mf_i(x(\sigma,\omega),\omega)\,u_i(\sigma)\,d\sigma.
\end{aligned}
\end{equation} If there exists a  process $(\bar{u}, \{\bar{x}(\cdot,\omega) : \omega\in\Omega\})$ solving problem $(P),$ then it is called   {\it optimal pair}, and we will refer to $\bar{x}$ and $\bar{u}$ as {\it optimal trajectory} and {\it control}, respectively. 

\section{The case of known initial condition}\label{KIC}

We first consider the problem where the initial conditions are known, {\it i.e.,}  problem $(P)$  with $\varphi(\omega) = x_0$ for all $\omega\in\Omega$. 
%We first consider problem $(P)$ with $\varphi := x_0$ where $x_0\in \mathbb R^n$ is given.
%Regularity assumptions for the data functions are detailed below. 
It is sometimes useful to define a generic expression for the system dynamics. Hence, for the appropriate cases, we consider the function $f$ given by
\begin{equation}\label{dyna.finito}
f(x,u,\omega):=f_0(x,\omega) + \sum_{i=1}^m f_i(x,\omega)u_i.
\end{equation}

%\begin{equation}
%\begin{cases}\label{problem}\tag{P}
%\text{minimize} \quad & \displaystyle J_\mu[u] := \int_\Omega g(x(T,\omega),\omega) d\mu(\omega)\\
%\text{subject to} \quad & \dot{x}(t,\omega) = f_0(x(t,\omega),\omega) + \sum_{i=1}^m f_i(x(t,\omega),\omega)u_i(t) \\
%& x(t_0,\omega) = x_0 \\%x_0(\omega) \\
%& x \in \left\{ x(\cdot,\omega) :[0,T]\rightarrow \mathbb{R}^n \;|\; \omega \in \Omega \right\}, \; u \in L^\infty([0,T]; U),\end{cases}\end{equation}

\subsection{Continuity of trajectories}

This subsection aims at studying the regularity of the trajectories with respect to the parameters and controls. In the next result, for each control $u \in \mathcal{U}_{ad}[t_0,T], $ the associated trajectory will be denoted by $x_u(\cdot,\omega)$ for all $\omega\in \Omega.$
\begin{lemma}\label{cont.traj}
   Suppose that \textbf{  (H0)} holds true, then the following properties are verified.
    \begin{itemize}
        \item[(i)] For each $u \in \mathcal{U}_{ad}[t_0,T], $ the trajectory $x_u$ is uniformly bounded: $$|x_u(t,\omega)|\leq C_x \quad \text{for all } t\in [t_0,T], \ \omega \in \Omega,$$where $C_x $ is a positive constant which depends on $u$ and on $x_0.$
        \item[(ii)] If there exists a modulus of continuity $\theta_{f}$ such that, for all  \ $\omega_1,\omega_2\in\Omega,$
    {\small\begin{equation}\label{f.regular}
        \int_0^T \sup_{x\in\mathbb{R}^n, u\in U(t)} |f(x,u,\omega_1) - f(x,u,\omega_2)| dt \leq \theta_{f}(\rho_\Omega(\omega_1,\omega_2)),
    \end{equation}}
     then,  for each $u \in \mathcal{U}_{ad}[t_0,T],$ the map $\omega\mapsto x_u(\cdot,\omega)$ is continuous from $\Omega$ to $W^{1,1}([t_0,T];\mathbb R^n)$;
        \item [(iii)] For each $\omega\in \Omega,$ the map $u\mapsto x_u(\cdot,\omega)$ is
        \begin{itemize}
            \item[$\bullet$] continuous   from ${\mathcal{U}}_{\rm ad}[t_0,T]$ to $W^{1,1}([t_0,T];\mathbb R^n),$
            \item[$\bullet$] continuous  from $L^2(t_0,T;\mathbb{R}^m)$ to $W^{1,1}([t_0,T];\mathbb R^n).$ 
        \end{itemize}
    \end{itemize}
\end{lemma}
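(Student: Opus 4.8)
The plan is to derive all three statements from Grönwall-type estimates on the integral formulation \eqref{trajectory}, relying only on the linear-growth bound \eqref{fbounded} and the Lipschitz bound \eqref{fLipschitz} of \textbf{(H0)} (existence and uniqueness of each arc $x_u(\cdot,\omega)$ is standard Carathéodory theory, which I take for granted). Throughout I would use $\ell_u(t):=k_0+\sum_{i=1}^m k_i|u_i(t)|$, which is a Lipschitz modulus in $x$ for $f(\cdot,u(t),\omega)$ uniformly in $\omega$ and which lies in $L^1(t_0,T)$ whenever $u\in L^2(t_0,T;\mathbb R^m)$ and in $L^\infty(t_0,T)$ whenever $u\in\mathcal U_{\rm ad}[t_0,T]$. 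For (i), taking norms in \eqref{trajectory} and bounding $|f_i(x_u(\sigma,\omega),\omega)|\le c_i(1+|x_u(\sigma,\omega)|)$ and $|u_i(\sigma)|\le\|u\|_\infty<\infty$ (finite since $u\in\mathcal U_{\rm ad}[t_0,T]$ takes values in a compact set) gives
\[
1+|x_u(t,\omega)|\le 1+|x_0|+C\int_{t_0}^{t}\big(1+|x_u(\sigma,\omega)|\big)\,d\sigma,\qquad C:=c_0+\|u\|_\infty\textstyle\sum_{i=1}^m c_i,
\]
and Grönwall's inequality yields $|x_u(t,\omega)|\le(1+|x_0|)e^{C(T-t_0)}-1=:C_x$, which depends only on $u$ and $x_0$ and is uniform in $(t,\omega)$.

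For (ii) I would fix $u$, put $\Delta(t):=|x_u(t,\omega_1)-x_u(t,\omega_2)|$, subtract the two copies of \eqref{trajectory}, and split the integrand by inserting $f(x_u(\sigma,\omega_1),u(\sigma),\omega_2)$: the $x$-part is bounded via \eqref{fLipschitz} by $\ell_u(\sigma)\Delta(\sigma)$, and the $\omega$-part, integrated over $[t_0,T]$ and using $u(\sigma)\in U(\sigma)$ a.e., is at most $\theta_f(\rho_\Omega(\omega_1,\omega_2))$ by \eqref{f.regular}. Hence $\Delta(t)\le\theta_f(\rho_\Omega(\omega_1,\omega_2))+\int_{t_0}^t\ell_u(\sigma)\Delta(\sigma)\,d\sigma$, so by Grönwall $\|\Delta\|_1\le(T-t_0)\|\Delta\|_\infty\le(T-t_0)\,\theta_f(\rho_\Omega(\omega_1,\omega_2))\,e^{\|\ell_u\|_1}\to0$ as $\rho_\Omega(\omega_1,\omega_2)\to0$, because $\theta_f$ is a modulus of continuity. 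For the $W^{1,1}$ claim I would then use $\dot x_u(t,\omega)=f(x_u(t,\omega),u(t),\omega)$ a.e.\ and the same splitting to get $\|\dot x_u(\cdot,\omega_1)-\dot x_u(\cdot,\omega_2)\|_1\le\theta_f(\rho_\Omega(\omega_1,\omega_2))+\|\ell_u\|_1\,\|\Delta\|_\infty\to0$.

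For (iii), fixing $\omega$ and setting $x:=x_u(\cdot,\omega)$, $y:=x_v(\cdot,\omega)$, I would decompose $f(x,u,\omega)-f(y,v,\omega)$ into the $x$-increment, bounded via \eqref{fLipschitz} by $\ell_u(\sigma)|x(\sigma)-y(\sigma)|$, and the control-increment $\sum_i f_i(y,\omega)(u_i(\sigma)-v_i(\sigma))$, bounded via \eqref{fbounded} by $\big(1+\|y\|_\infty\big)\sum_i c_i|u_i(\sigma)-v_i(\sigma)|$; here $\|y\|_\infty<\infty$ by (i), whose proof extends verbatim to $v\in L^2(t_0,T;\mathbb R^m)$ since $\sigma\mapsto c_0+\sum_i c_i|v_i(\sigma)|$ is still in $L^1$. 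The Grönwall inequality with the $L^1$-kernel $\ell_u$ then yields $\|x-y\|_\infty\le\big(1+\|y\|_\infty\big)\big(\textstyle\sum_i c_i\big)e^{\|\ell_u\|_1}\|u-v\|_1$, and Cauchy--Schwarz on $[t_0,T]$ converts $\|u-v\|_1$ into a constant multiple of $\|u-v\|_{L^2}$; since on a bounded interval $L^1$-, $L^2$- and $L^\infty$-convergence coincide for uniformly bounded families, this covers both the $\mathcal U_{\rm ad}[t_0,T]$ and the $L^2$ bullets. Finally, exactly as in (ii), $\|\dot x-\dot y\|_1\le\|\ell_u\|_1\,\|x-y\|_\infty+\big(1+\|y\|_\infty\big)\big(\textstyle\sum_i c_i\big)\|u-v\|_1\to0$ as $v\to u$, which upgrades the convergence of trajectories to convergence in $W^{1,1}([t_0,T];\mathbb R^n)$.

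The recurring ingredient is Grönwall's inequality, and the main obstacle I anticipate is the $L^2$ half of (iii): one cannot factor out $\|u\|_\infty$, so the a priori bound of (i) must be re-derived for merely $L^2$ controls, Grönwall must be used in the version with an $L^1$-integrable kernel, and Cauchy--Schwarz is needed to pass from $\|u-v\|_1$ to $\|u-v\|_{L^2}$. Two secondary points deserve care: the correct application of the integrated supremum hypothesis \eqref{f.regular} in (ii), which is where admissibility of $u$ (hence $u(\sigma)\in U(\sigma)$ a.e.) enters; and the separate estimate of the time derivative in both (ii) and (iii), without which one obtains only uniform, not $W^{1,1}$, convergence.
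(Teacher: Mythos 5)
Your proposal is correct and follows essentially the same route as the paper's Appendix~\ref{AppA} proof: Gr\"onwall estimates on the integral equation \eqref{trajectory}, with the standard insertion of a middle term to separate the $x$-increment (handled by \eqref{fLipschitz}) from the $\omega$- or control-increment (handled by \eqref{f.regular} or \eqref{fbounded} plus Cauchy--Schwarz). The one point where you go beyond the written proof is the explicit estimate of $\|\dot x_u(\cdot,\omega_1)-\dot x_u(\cdot,\omega_2)\|_{1}$ (and its analogue in (iii)), which is indeed needed to upgrade the sup-norm bounds to $W^{1,1}$-continuity and which the paper leaves implicit.
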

\begin{proof} See Appendix \ref{AppA}.
\end{proof}

\subsection{Existence of optimal control}
To establish results on the existence of optimal controls and necessary optimality conditions for the problem $(P)$ with a given initial condition, we first note that due to the compactness and convexity of $U(t)$ for almost every $t \in [t_0, T]$, and the affine structure of the dynamics, for all $x\in \mathbb{R}^n,$ $\omega \in \Omega,$ the set 
%\begin{assumption}    \label{as:metric_space}$(\Omega,\rho_\Omega)$ is a complete separable metric space. \OmegaS{ int }\end{assumption}
%\begin{assumption}\label{as:metric_space} 
\begin{equation}\label{Comp.Conv}
f(x,U(t),\omega)
\ \text{is compact and convex for {\it a.e.} }\ t \in [t_0,T]. 
\end{equation}
In the proof of the following existence result, we will make essential use of \eqref{Comp.Conv} and the sequential compactness of trajectories established in \cite[Theorem 23.2]{Clarke}.

\begin{theorem}[Existence of optimal control]\label{Existence}
Let us suppose that set of conditions \textbf{  (H0)} 
is fulfilled. Then, if there is at least one admissible process for the problem $(P) $, the latter admits an optimal solution.
\end{theorem}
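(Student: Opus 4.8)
The strategy is the classical direct method of the calculus of variations. Since there is at least one admissible process, the infimum $\inf J[u(\cdot)]$ over admissible processes is finite (it is bounded below by $-M$ and above by the value at the given process, using \textbf{(H0)(iii)}). Take a minimizing sequence of admissible processes $(u^k, \{x^k(\cdot,\omega):\omega\in\Omega\})$, so $J[u^k]\to\inf J$. The plan is to extract a subsequence converging, in a suitable sense, to an admissible process whose cost is the infimum.

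\medskip

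\noindent First I would fix attention on the trajectories. For each fixed $\omega\in\Omega$, Lemma~\ref{cont.traj}(i) (together with the linear growth bound \eqref{fbounded}) gives a uniform bound $|x^k(t,\omega)|\le C_x$, where $C_x$ depends only on $x_0$ and on the compact set $\mathbf U$ containing all admissible controls — hence uniformly in $k$. Likewise $|\dot x^k(t,\omega)|\le c_0(1+C_x)+\sum_i c_i(1+C_x)|u_i^k(t)|$ is uniformly bounded in $L^\infty$, so the arcs $x^k(\cdot,\omega)$ are equi-Lipschitz. By \cite[Theorem 23.2]{Clarke} — the sequential compactness of trajectories for differential inclusions, applied with the velocity set $f(x,U(t),\omega)$, which is compact and convex by \eqref{Comp.Conv} — there is a subsequence (a priori depending on $\omega$) along which $x^k(\cdot,\omega)$ converges uniformly to some arc $\bar x(\cdot,\omega)$ that is a trajectory of the inclusion $\dot x\in f(x,U(t),\omega)$, $x(t_0,\omega)=x_0$. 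To get a single subsequence working for the whole ensemble, I would first extract along a countable dense subset $D\subset\Omega$ (using separability of $\Omega$ from \textbf{(H0)(i)}) by a diagonal argument, then pass to the limit; for the cost functional one only ever integrates over $\Omega$, so it suffices to control the limit $\mu$-a.e., and in fact I would extract the subsequence of controls first (next paragraph) and let that drive the convergence of all trajectories simultaneously.

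\medskip

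\noindent Next, the controls: $\{u^k\}$ is bounded in $L^\infty(t_0,T;\mathbb R^m)\subset L^2(t_0,T;\mathbb R^m)$, so along a further subsequence $u^k\rightharpoonup \bar u$ weakly in $L^2$. Because $U(t)$ is closed and convex a.e. and $\mathrm{Gr}\,U(\cdot)$ is measurable, the admissible set $\mathcal U_{\rm ad}[t_0,T]$ is convex and closed in $L^2$, hence weakly closed (Mazur), so $\bar u\in\mathcal U_{\rm ad}[t_0,T]$. It remains to identify $\bar x(\cdot,\omega)$ as the trajectory associated with $\bar u$: passing to the limit in the integral equation \eqref{trajectory} is the delicate point because $u^k$ converges only weakly while $f_i(x^k(\sigma,\omega),\omega)$ converges only strongly/pointwise — but the product of a strongly convergent and a weakly convergent sequence does pass to the limit, since $f_i(x^k(\cdot,\omega),\omega)\to f_i(\bar x(\cdot,\omega),\omega)$ strongly in $L^2(t_0,T;\mathbb R^n)$ by the Lipschitz bound \eqref{fLipschitz} and the uniform convergence of $x^k(\cdot,\omega)$, and $u_i^k\rightharpoonup\bar u_i$ weakly in $L^2$. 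Hence $\int_{t_0}^t f_i(x^k(\sigma,\omega),\omega)u_i^k(\sigma)\,d\sigma\to\int_{t_0}^t f_i(\bar x(\sigma,\omega),\omega)\bar u_i(\sigma)\,d\sigma$, and similarly for the drift term, so $\bar x(\cdot,\omega)$ solves \eqref{trajectory} with control $\bar u$ for ($\mu$-a.e., hence after redefining on a null set, every) $\omega$. Thus $(\bar u,\{\bar x(\cdot,\omega)\})$ is an admissible process.

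\medskip

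\noindent Finally, lower semicontinuity of the cost. For each $\omega$, $x^k(T,\omega)\to\bar x(T,\omega)$, so $g(x^k(T,\omega),\omega)\to g(\bar x(T,\omega),\omega)$ by continuity of $g(\cdot,\omega)$ (from the Lipschitz estimate in \textbf{(H0)(iii)}); moreover $|g(x^k(T,\omega),\omega)|\le M$, so by dominated convergence $J[u^k]=\int_\Omega g(x^k(T,\omega),\omega)\,d\mu(\omega)\to\int_\Omega g(\bar x(T,\omega),\omega)\,d\mu(\omega)=J[\bar u]$. Since $J[u^k]\to\inf J$, we get $J[\bar u]=\inf J$, so $(\bar u,\{\bar x(\cdot,\omega)\})$ is optimal. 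The main obstacle is the bookkeeping in the previous paragraph — handling the weak convergence of the controls against the strongly convergent vector-field terms uniformly enough in $\omega$ to pass to the limit in \eqref{trajectory} for $\mu$-almost every $\omega$ with a single subsequence — which is exactly where the affine structure \eqref{Comp.Conv} and the separability of $\Omega$ are essential; everything else is a routine application of the cited compactness theorem and dominated convergence.
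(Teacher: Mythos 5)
Your proof is correct and follows essentially the same route as the paper: a minimizing sequence, weak(-*) extraction of a limit control $\bar u\in\mathcal U_{\rm ad}$, sequential compactness of trajectories via \cite[Theorem 23.2]{Clarke} using \eqref{Comp.Conv}, identification of the limit arcs as the trajectories of $\bar u$, and convergence of the costs. The only difference is that you spell out the identification step (strong convergence of $f_i(x^k(\cdot,\omega),\omega)$ against weak convergence of $u^k$ in the integral equation, with the $\omega$-uniformity handled by fixing the control subsequence and using uniqueness of solutions), which the paper delegates to \cite[Theorem III.2]{AronnaMonteiroSierra2025CDC}.
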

\begin{proof} 
By hypothesis, there exists at least one admissible process, and the cost function $ g $ is bounded from below. Consequently, the infimum of $(P)$ is finite, ensuring the existence of a minimizing sequence of admissible processes $(u_k, \{x_k(\cdot,\omega) : \omega \in \Omega\})_k$. The sequence of controls $\{u_k\}_k$ is uniformly bounded, which allows the extraction of a weak-* convergent subsequence (unlabeled for simplicity) $ u_k \overset{*}{\rightharpoonup} \bar{u} $ in $ L^\infty(0, T; \mathbb{R}^m) $, with $\bar{u} \in \mathcal{U}_{\rm ad}[t_0,T]$. Similarly, the associated trajectories $\{x_k(\cdot,\omega)\}_k$ are uniformly bounded for each $\omega \in \Omega$. Since \eqref{Comp.Conv} holds, we can apply \cite[Theorem 23.2]{Clarke} to extract a further subsequence that converges uniformly to a limit function $\bar{x}(\cdot, \omega) \in W^{1,1}$.

An argument analogous to that in \cite[Theorem III.2]{AronnaMonteiroSierra2025CDC} for scalar-valued controls shows that the family $\{\bar{x}(\cdot, \omega) : \omega \in \Omega\}$ represents the trajectory associated with the control $\bar{u}$. Furthermore, the cost function of the sequence $\{x_k(\cdot,\omega)\}_k$ converges to the cost of the process $(\bar{u}, \{\bar{x}(\cdot,\omega) : \omega \in \Omega\})$, completing the proof.

\end{proof}
\begin{remark}
The conclusion in \cite[Theorem 23.2]{Clarke} also guarantees the existence of an admissible control $\hat{u}$ associated with the limit trajectory $\{\bar{x}(\cdot,\omega) : \omega \in \Omega\}$. However, no information is provided regarding the convergence of $u_k$ to $\hat{u}$ in any specific sense. Note that if the fields $f_1, \dots, f_m$ are linearly independent, then $\bar{u} = \hat{u}$ almost everywhere.

\end{remark}
\subsection{Necessary optimality conditions}
In this section, we verify that the hypotheses of the {\it smooth-case} version of the Pontryagin Maximum Principle given in Bettiol-Khalil \cite[Theorem 3.3]{Bettiol2019} are holding in our framework. We first introduce some necessary definitions and additional regularity assumptions.

\begin{definition}[$W^{1,1}$-local minimizer]
    \label{w11_minimizer}
    A process  $(\bar{u}, \{\bar{x}(\cdot,\omega): \omega\in \Omega\})$, is said to be an {\it $W^{1,1}$-local minimizer} for $(P) $ if there exists $\epsilon>0$ such that 
    \begin{equation*}
        \int_\Omega g(\bar{x}(T,\omega),\omega)d\mu(\omega)\leq
        \int_\Omega g(x(T,\omega),\omega)d\mu(\omega),
    \end{equation*}
    for all admissible processes $(u, \{x(\cdot,\omega): \omega\in \Omega\})$  such that
    \begin{equation*}
        \| \bar{x} (\cdot,\omega) - x(\cdot,\omega) \|_{W^{1,1}([0,T];\mathbb{R}^n)} \leq \epsilon \; , \; \forall \omega \in \supp (\mu).
    \end{equation*}
\end{definition}

\bigskip

We recall the assumptions and results established by Bettiol-Khalil in \cite{Bettiol2019}.  Given a $W^{1,1}$-local minimizer $(\bar{u}, \{\bar{x}(\cdot, \omega): \omega \in \Omega\})$ and $\delta > 0$, we shall refer to the following set of assumptions as \textbf{  (H1)}:

\begin{itemize}\label{A5}
   \item[(i)] There is a modulus of continuity $\theta_f:[0,\infty)\rightarrow[0,\infty)$ such that, for all $\omega,\omega_1,\omega_2\in\Omega,$ 
    \begin{gather*}
        \int_{t_0}^T \sup_{x\in\Bar{x}(t,\omega)+\delta \mathbb{B}, u\in U(t)} |f(x,u,\omega_1) - f(x,u,\omega_2)| dt \leq \theta_f(\rho_\Omega(\omega_1,\omega_2))
    \end{gather*} 
\item[(ii)] $g(\cdot,\omega)$ is differentiable on $\Bar{x}(T,\omega)+\delta \mathbb{B}$, for each $\omega\in\Omega,$  $\nabla_x g(\cdot,\omega)$ is continuous on $\Bar{x}(T,\omega)+\delta \mathbb{B},$ $\nabla_x g(x,\cdot)$ is continuous on $\Omega$ and there exists a positive constant $C_g$ such that $|\nabla_x g(\bar x(t,\omega),\omega)|\leq C_g$; 
\end{itemize}
\begin{itemize}\label{f.contdiff} 
   \item[(iii)] $x\mapsto f(t,x,u,\omega)$ is continuously differentiable on $\Bar{x}(t,\omega)+\delta \mathbb{B}$ for all $u\in U(t),$ $\omega\in\Omega$ a.e. $t\in[t_0,T]$, and $\omega\mapsto\nabla_x f(t,x,u,\omega)$ is continuous uniformly with respect to $(t,x,u).$   %$\left\{(t',x',u') \in [0,T]\times\mathbb{R}^n\times\mathbb{R}^m | u' \in U(t')\right\}.$
  \item[(iv)]  We assume that the functions $f_i$ are twice continuously differentiable on $\Bar{x}(t,\omega)+\delta \mathbb{B}$ for all $\omega\in\Omega$  and a.e. $t\in[t_0,T]$ and there exists $C_f^k>0$ such that, for $i\in\{0,\dots,m\}, k=1,2,$
    \begin{equation*}
        \left|\frac{\partial^k f_i}{\partial x^k}(\bar{x}(t,\omega),\omega)\right|< C_{f_i}^k ,\quad \omega\in\supp(\mu),\text{ a.e. } t \in [0,T].
    \end{equation*}
   % and
    %\begin{equation*}
    %    \left|\frac{\partial^2 f_i}{\partial x^2}(\bar{x}(t,\omega),\omega)\right|\leq C_{f_i}^2 ,\quad \omega\in\supp(\mu),\text{ a.e. } t \in [0,T].
    %\end{equation*}
\end{itemize}
\begin{theorem}{\cite[Theorem 3.3]{Bettiol2019}}
\label{pmp}
Let \textbf{  (H0)} and \textbf{  (H1)} hold and $(\bar{u}, \{\bar x(\cdot,\omega) : \omega \in \Omega \} )$ be a $W^{1,1}$-local minimizer for $(P)$. Then, there is a $I\times\mathcal B_\Omega$-measurable function $p : [0,T] \times \Omega \rightarrow \mathbb{R}^n$ such that $ p(\cdot,\omega) \in W^{1,1}([0,T], \mathbb{R}^n)$ for all $\omega \in\supp (\mu),$ and 
\begin{multline}\label{pmp.finite}
    \int_\Omega p(t,\omega)f(\bar x(t,\omega),\bar{u}(t),\omega)d\mu(\omega) = 
    \\ \max_{u \in {U(t)}}\int_\Omega p(t,\omega)f(\bar x(t,\omega),u,\omega)d\mu(\omega) \ \text{ a.e } \ t \in [0,T] ,
\end{multline}
\begin{multline}
    \label{pmp:adjoint}
    -\dot{p}(t,\omega) = [\nabla_x f(\bar{x}(t,\omega),\bar{u}(t),\omega)]^\top p(t,\omega),
\ \text{ a.e. } t\in [0,T],\text{ for all }\omega \in \mathrm{supp}(\mu),
\end{multline}

\begin{equation}
    -p(T,\omega) = \nabla_x g(\bar{x}(T,\omega),\omega),\text{ for all }\omega \in \mathrm{supp}(\mu).
\end{equation}
\end{theorem}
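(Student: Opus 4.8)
We sketch the argument. The plan is to derive this statement as a direct application of the smooth Riemann--Stieltjes Pontryagin Maximum Principle of Bettiol--Khalil, \cite[Theorem 3.3]{Bettiol2019}. Since that result is stated for general (possibly time-dependent, Mayer-type) dynamics obeying a list of measurability, growth, Lipschitz, modulus-of-continuity and $C^1$-differentiability hypotheses, it suffices to verify that the control-affine data \eqref{dyna.finito}, together with \textbf{(H0)} and \textbf{(H1)}, meet every one of those hypotheses; the conclusions \eqref{pmp.finite}--\eqref{pmp:adjoint} are then precisely what \cite[Theorem 3.3]{Bettiol2019} delivers.

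The first step is to record the elementary consequences of the affine structure. Because $U(t)\subset\mathbf U$ with $\mathbf U$ compact, there is $R>0$ with $|u|\le R$ for every $u\in\mathbf U$; combining this with \eqref{fbounded} gives $|f(x,u,\omega)|\le\big(c_0+R\textstyle\sum_{i=1}^m c_i\big)(1+|x|)$, and combining it with \eqref{fLipschitz} gives $|f(x,u,\omega)-f(x',u,\omega)|\le\big(k_0+R\textstyle\sum_{i=1}^m k_i\big)|x-x'|$, both uniformly in $(u,\omega)$. Joint $\mathcal L\times\mathcal B^n\times\mathcal B^m\times\mathcal B_\Omega$-measurability of $(t,x,u,\omega)\mapsto f(x,u,\omega)$ (which is $t$-independent here) follows from the assumed regularity of the $f_i$ in $x$ and $\omega$, and $\mathrm{Gr}\,U(\cdot)$ is $\mathcal L\times\mathcal B^m$-measurable with compact values by \textbf{(H0)}(i); any running cost appearing in \cite{Bettiol2019} is taken identically zero, so $(P)$ is a pure Mayer problem. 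These match the basic standing assumptions on the dynamics in \cite{Bettiol2019}.

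The second step is to match, item by item, the minimizer-dependent hypotheses of \cite[Theorem 3.3]{Bettiol2019} against \textbf{(H1)}: the modulus-of-continuity estimate for $f(\cdot,\cdot,\omega)$ over the $\delta$-tube $\bar x(t,\omega)+\delta\mathbb B$ is \textbf{(H1)}(i); differentiability of $g(\cdot,\omega)$ on $\bar x(T,\omega)+\delta\mathbb B$ with $\nabla_x g$ continuous in $x$ and in $\omega$ and $|\nabla_x g(\bar x(T,\omega),\omega)|\le C_g$ is \textbf{(H1)}(ii); $C^1$-smoothness of $x\mapsto f(x,u,\omega)$ on the tube and continuity of $\omega\mapsto\nabla_x f$ uniform in $(t,x,u)$ is \textbf{(H1)}(iii), upon noting $\nabla_x f=\nabla_x f_0+\sum_{i=1}^m u_i\nabla_x f_i$, whence $|\nabla_x f(\bar x(t,\omega),u,\omega)|\le C_{f_0}^1+R\sum_{i=1}^m C_{f_i}^1$ along $\bar x$ by \textbf{(H1)}(iv) and $|u|\le R$. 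Here the uniform bound $|\bar x(t,\omega)|\le C_x$ from Lemma \ref{cont.traj}(i) is what keeps the $\delta$-tubes inside one fixed compact set, so that these estimates are legitimate; the second-order bounds in \textbf{(H1)}(iv) are not needed here (they enter only in Section \ref{char}).

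Once all hypotheses are checked, \cite[Theorem 3.3]{Bettiol2019} supplies an $I\times\mathcal B_\Omega$-measurable costate $p$ with $p(\cdot,\omega)\in W^{1,1}([0,T],\mathbb R^n)$ for every $\omega\in\supp(\mu)$, solving the adjoint equation, satisfying the transversality condition $-p(T,\omega)=\nabla_x g(\bar x(T,\omega),\omega)$, and obeying the integrated maximization condition; substituting $\nabla_x f=\nabla_x f_0+\sum_i\bar u_i\nabla_x f_i$ casts the adjoint equation in the control-affine form \eqref{pmp:adjoint}. The only genuinely delicate point is the bookkeeping surrounding the ``$\sup_{u\in U(t)}$'' quantifiers in the hypotheses of \cite{Bettiol2019}: one must be sure these suprema are finite and controlled by the $f_i$-data alone, rather than merely ``for each fixed $u$'', and this is exactly what the uniform bound $|u|\le R$ --- a consequence of the compactness of $\mathbf U$ --- provides. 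No analysis beyond \cite{Bettiol2019} is required.
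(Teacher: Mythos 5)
Your proposal is correct and follows essentially the same route as the paper: the paper likewise treats Theorem \ref{pmp} as a direct application of \cite[Theorem 3.3]{Bettiol2019}, with the only work being the verification that the control-affine data under \textbf{(H0)}--\textbf{(H1)} (using compactness of $\mathbf{U}$ and the uniform state bound of Lemma \ref{cont.traj}) satisfy that theorem's hypotheses. Your item-by-item matching is exactly the verification the paper alludes to, so no further comment is needed.
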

With the previous considerations, we verify that the above theorem holds. Condition \eqref{pmp.finite} will be applied in Section \ref{char} to characterize the singular arcs.
The next section deals with the more general case where there is uncertainty on the initial condition.

\section{Uncertain initial condition}\label{UIC}

In this section, we aim at showing necessary conditions for the problem $(P)$ with uncertain initial condition, {\it i.e.,} we suppose that $\varphi$ belongs to the space of measurable functions $\Vspace$, given by
$$
L^2(\mu,\Omega;\mathbb{R}^n):=\Big\{\varphi:\Omega\rightarrow \mathbb{R}^n : \int_{\Omega}|\varphi(\omega)|^2\,d\mu(\omega)<\infty\Big\},
$$
which is a Hilbert space endowed with the scalar product
$$
\langle \varphi,\psi\rangle:= \int_{\Omega}\varphi(\omega)\cdot \psi(\omega)\,d\mu(\omega) \quad \text{for} \quad \varphi,\psi \in L^2(\mu,\Omega;\mathbb{R}^n).
$$
 We will use the notation $
\|\cdot \|_{L^2}$ to refers to the norm associated with the above scalar product. Here, we redefine the dynamics as a map taking values in an infinite-dimensional space, more precisely, we define the dynamics   $$f: L^2(\mu,\Omega;\mathbb{R}^n)\times \mathbb{R}^m \rightarrow L^2(\mu,\Omega;\mathbb{R}^n)$$ as  
\begin{equation}\label{dyna.infinito}
\begin{aligned}
f(\varphi(\cdot),u)&:=%f_0(\varphi(\cdot),\cdot)+ \textbf{  f}(\varphi(\cdot),\cdot). u(t)\\
f_0(\varphi(\cdot),\cdot) + \sum_{i=1}^m f_i(\varphi(\cdot),\cdot)u_i.
\end{aligned}
\end{equation}
%\begin{equation*}(P) \parbox{.9\textwidth}{
%\begin{align}
%&\min_{u} \int_\Omega g(x(T,\omega),\omega) \, d\mu(\omega), \nonumber \\
%&\text{s.t.} \nonumber \\
%&\begin{cases}\label{dynamics2}
%\dot{x}(t,\cdot) = f(t,x(t,\cdot),u(t)) & \text{a.e. } t\in [t_0,T], \\
%x(t_0,\cdot)=\varphi(\cdot)\in K, & \\
%u(t) \in U(t), & \text{a.e. } t\in [t_0,T].
%\end{cases}
%\end{align}}\nonumber
%\end{equation*}
%Where $K$ is a compact subset of $\Vspace.$ 
%Here, we call {\it admissible process} a pair $(x,u)$  composed of an absolutely continuous function $x \in W^{1,1}([t_0,T]:L^2(\mu,\Omega;\mathbb{R}^n)) $ and a measurable function $u \in \mathcal{U}_{\rm ad}[t_0,T]$, which together, satisfy the constraints \eqref{dynamics} of problem $(P).$ Additionally, $x$ is referred to as {\it trajectory} associated to the {\it control} $u$. If there exists a  process $(\bar{x},\bar{u})$ that solves the problem $(P),$ then it is called   {\it optimal pair}, and we will refer to $\bar{x}$ and $\bar{u}$ as an {\it optimal trajectory} and {\it control}, respectively. %A trajectory $x(t,\cdot)$ for the problem $(P)$ verifies  the following evolution integral equation\begin{equation}\label{trajectory}
%\begin{aligned}
%x(t,\cdot)%\varphi(\cdot)+\int_{t_0}^{t}f(\sigma,x(\sigma,\cdot),u(\sigma))\,d\sigma=\varphi(\cdot)+\int_{t_0}^{t} f_0(x(\sigma,\cdot),\cdot) + \sum_{i=1}^mf_i(x(\sigma,\cdot),\cdot).u_i(\sigma)\,d\sigma.
%\end{aligned}
%\end{equation}
Condition \textbf{  (H0)} implies the existence and uniqueness of a solution of \eqref{trajectory}, as shown below.
\begin{lemma}
If \textbf{  (H0)} hold true, then, for any $(\varphi, u) \in  L^2(\mu,\Omega;\mathbb{R}^n) \times \mathcal{U}_{ad}[t_0,T],$ there exists a unique solution $x\in C([t_0,T]: L^2(\mu,\Omega;\mathbb{R}^n))$ of \eqref{trajectory}.
\end{lemma}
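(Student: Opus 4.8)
The plan is to establish existence and uniqueness via a fixed-point argument applied to the integral operator associated with \eqref{trajectory}, working in the Banach space $C([t_0,T];L^2(\mu,\Omega;\mathbb{R}^n))$. First I would verify that, under \textbf{(H0)}, the Nemytskii-type operator $\varphi(\cdot)\mapsto f(\varphi(\cdot),u)$ defined in \eqref{dyna.infinito} indeed maps $L^2(\mu,\Omega;\mathbb{R}^n)$ into itself and is Lipschitz: for $\varphi,\psi\in L^2(\mu,\Omega;\mathbb{R}^n)$ and $u(t)\in\mathbf{U}$, the pointwise bound \eqref{fLipschitz} gives
\begin{equation*}
|f(\varphi(\cdot),u(t))-f(\psi(\cdot),u(t))|\leq \Big(k_0+\sum_{i=1}^m k_i|u_i(t)|\Big)|\varphi(\cdot)-\psi(\cdot)|\quad\text{pointwise in }\omega,
\end{equation*}
and since $u\in\mathcal{U}_{\rm ad}[t_0,T]$ is essentially bounded, squaring and integrating against $\mu$ yields $\|f(\varphi(\cdot),u(t))-f(\psi(\cdot),u(t))\|_{L^2}\leq L_u\|\varphi-\psi\|_{L^2}$ for a constant $L_u$ depending only on the $k_i$ and $\|u\|_\infty$; the linear-growth bound \eqref{fbounded} together with $\varphi\in L^2$ similarly shows $f(\varphi(\cdot),u(t))\in L^2(\mu,\Omega;\mathbb{R}^n)$ with an $L^2$-norm that is integrable in $t$. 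Also $t\mapsto u(t)$ being measurable and essentially bounded makes $t\mapsto f(x(t),u(t))$ Bochner integrable whenever $x\in C([t_0,T];L^2)$.

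Next I would define the map $\mathcal{T}:C([t_0,T];L^2(\mu,\Omega;\mathbb{R}^n))\to C([t_0,T];L^2(\mu,\Omega;\mathbb{R}^n))$ by
\begin{equation*}
(\mathcal{T}x)(t):=\varphi+\int_{t_0}^{t}f(x(\sigma),u(\sigma))\,d\sigma,
\end{equation*}
check that $\mathcal{T}x$ is indeed continuous in $t$ (absolute continuity of the Bochner integral), and show that a suitable iterate $\mathcal{T}^N$ is a contraction. Using the Lipschitz estimate above, a standard induction gives
\begin{equation*}
\|(\mathcal{T}^N x)(t)-(\mathcal{T}^N y)(t)\|_{L^2}\leq \frac{(L_u(t-t_0))^N}{N!}\sup_{\sigma\in[t_0,T]}\|x(\sigma)-y(\sigma)\|_{L^2},
\end{equation*}
so for $N$ large enough $\mathcal{T}^N$ is a contraction on the complete metric space $C([t_0,T];L^2)$, and the Banach fixed-point theorem (in the form for maps with a contractive iterate) yields a unique fixed point $x$, which is by construction the unique solution of \eqref{trajectory} in $C([t_0,T];L^2(\mu,\Omega;\mathbb{R}^n))$. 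Alternatively one may simply iterate the local existence-uniqueness argument on subintervals of length $<1/L_u$ and patch, which avoids the iterated-contraction bookkeeping.

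The main technical obstacle I anticipate is not the contraction estimate itself but the measurability and Bochner-integrability verifications: one must confirm that for $x\in C([t_0,T];L^2(\mu,\Omega;\mathbb{R}^n))$ the composition $\omega\mapsto f_i(x(t,\omega),\omega)$ is $\mathcal{B}_\Omega$-measurable (this uses the $\mathcal{B}^n\times\mathcal{B}_\Omega$-measurability of $f_i$ implicit in \textbf{(H0)} and a representative-selection argument, since $x(t,\cdot)$ is only an equivalence class), that $t\mapsto f(x(t,\cdot),u(t))$ is strongly measurable with values in $L^2$ (separability of $L^2(\mu,\Omega;\mathbb{R}^n)$, guaranteed by the separability of $\Omega$ in \textbf{(H0)}(i), lets us invoke the Pettis measurability theorem), and that its $L^2$-norm is in $L^1(t_0,T)$ so that the Bochner integral defining $\mathcal{T}$ makes sense. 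Once these points are dispatched, the remainder is the routine fixed-point machinery sketched above.
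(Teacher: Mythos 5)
Your proposal is correct and takes essentially the same route as the paper: both reduce the problem to verifying that under \textbf{(H0)} the map $\varphi\mapsto f(\varphi,u)$ satisfies an $L^2$ linear-growth bound and an $L^2$ Lipschitz bound with constants depending on $\|u\|_{L^\infty}$, after which the paper invokes the standard Cauchy--Lipschitz result for evolution equations in Banach spaces (Li--Yong, Ch.~2, Prop.~5.3), whose proof is precisely the Picard/contraction argument you spell out. Your extra attention to strong measurability and Bochner integrability simply fills in details that the paper delegates to the cited reference.
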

\begin{proof}
      From conditions \textbf{  (H0)}, we deduce directly that $f$ satisfies the following inequalities: 
\begin{equation}\label{H'}
\begin{aligned}
&||f(\varphi,u)||_{L^2}\leq C_{u}\left(%\mu(\Omega)^{1/2}
1+||\varphi||_{L^2}\right),\\
    &||f(\varphi,u)-f(\psi,u)||_{L^2}\leq K_{u}||\varphi-\psi||_{L^2}   
\end{aligned}
\end{equation}
where $$C_u=C\left(c_0+||u||_{L^{\infty}}\sum_{i=1}^{m}c_i\right) \ \text{and} \ K_u=\left(k_0+||u||_{L^{\infty}}\sum_{i=1}^{m}k_i\right),$$ for some positive constant $C.$ It is clear %(see {\it e.g.} \cite[Ch.6, Theorem 1.2.]{Pazy}) 
(see, {\it e.g.,} \cite[Ch.2, Proposition 5.3]{liYong}) that under conditions \eqref{H'}, there exists a unique solution $x\in C([t_0,T]: L^2(\mu,\Omega;\mathbb{R}^n))$ of \eqref{trajectory}.
\end{proof}
We will denote the set of trajectories of \eqref{trajectory} with initial data $\varphi\in L^2(\mu,\Omega;\mathbb{R}^n)$  as 
\begin{equation}\label{set.trajectory}
\begin{aligned}
    S(\varphi):= \left\{ x \in C([t_0,T]: L^2(\mu,\Omega;\mathbb{R}^n)) :  x  \ \text{solves  \eqref{trajectory} with }  u\in \mathcal{U}_{ad} \right\}.
   \end{aligned}
\end{equation}

\begin{remark}\label{F.CC.L2}
    Note that condition ${\bf  (H0)}$ and the affine structure of the dynamics imply that for all $\varphi\in \Vspace,$ the set
$f(\varphi,U(t))\subset\Vspace$ is compact and convex for {\it a.e.} $t\in [t_0,T].$
\end{remark}

From now on, until otherwise specified, we will use $x(t):=x(t,\cdot)$ to denote the solution of \eqref{trajectory}.

\subsection{Existence of optimal controls}

The existence of optimal trajectories becomes more challenging when the initial data is uncertain, mainly due to the lack of compactness of bounded sets of $\Vspace$. In the proof of Theorem \ref{Existence}, a key step in establishing the existence result %of optimal trajectories 
relies on \cite[Theorem 23.2]{Clarke}, which ensures the sequential compactness of the set of admissible trajectories. However, latter result does not extend to infinite-dimensional spaces. Nevertheless, this challenge can be addressed under assumption \textbf{  (H0)}, combined with additional conditions imposed on the dynamics. One can derive an existence result in any of the four frameworks listed below by applying the methodologies outlined \cite{A.P.S,Frankowska1992} and \cite{liYong}:% that ensure the precompactness of the set of trajectories of \eqref{trajectory}: 
\\
\\
\textbf{  1. Compact Semigroup}  (see \cite{Frankowska1992,liYong}). 
The infinite-dimensional ODE in \eqref{dynamics} verifies
\begin{equation*}
\textbf{(E1)} \quad
\begin{aligned} 
& \qquad \dot{x}(t,\omega) = A x(t,\omega) + \sum_{i=1}^m f_i(x(t,\omega),\omega) u_i(t), \\
& \text{where } A \text{ is the infinitesimal generator of a compact semigroup.}
\end{aligned}
\end{equation*}
\textbf{  2. Compactness Condition} (see \cite{Frankowska1992,A.P.S}).
\begin{equation*}
  \textbf{(E2)}  \begin{aligned}
      \quad &\text{For all} \ R>0, \text{there exists } \  K_{R}\subset \Vspace \ \text{ compact, such} \\&\text{that} \ f(\varphi,U(t))\subset K_R \ \ \text{for every} \quad (t,\varphi)\in [t
    _0,T]\times \mathbb{B}_{R}(0).
    \end{aligned}
    \end{equation*}
 \textbf{  3. Strong Invariance.} Let $K\subset \Vspace$ be a compact subset, define $G:=[0,T]\times K$ and consider the autonomous differential inclusion 
  \begin{equation}\label{dyna2}
\begin{cases}
    \dot{v}(t) \in \{1\}\times f(x(t),U(t)) & \text{a.e} \ t\in[t_0,T],  \\
 v(t_0) = (t_0,\varphi)\in G.
\end{cases}
\end{equation}  
   \begin{definition}
        We say that the pair $(\{1\}\times f,G)$ is {\it strongly invariant} if, for
all $(t_0,\varphi)\in G,$ all the trajectories of the differential inclusion \eqref{dyna2} remain in $G,$ {\it i.e.,} $(t,x(t))\in G$ for all $t\in 
[t_0,T].$
\end{definition}
\begin{definition}[Contingent Cone]
     For $v\in G ,$ define the {\it contingent (or Bouligand) cone} by
    $$
    T_{G}(v):=\left\{ v':\liminf_{h\rightarrow 0^+}{h^{-1}\text{dist}(v+hv',G)=0}\right\},
    $$
    where $\text{dist}(a,B):=\inf\left\{|a-b|:b\in B\right\}.$
\end{definition}
Consider the following condition on the dynamics:
$$
\textbf{(E3)}\hspace{0.4cm} \parbox{.8\textwidth}{  There exists a full-measure subset $I$ of $[t_0,T]$   such that, for all $(t,\varphi) \in I \times K,$ $
\{1\}\times f(\varphi,U(t))\subset T_{G}(t,\varphi).$
}
$$
From \cite[Theorem 2]{Donchev}, the following result holds.
\begin{lemma}\label{strong.inv}
Under condition $\textbf{ (H0)},$  the  property $\textbf{ (E3)}$  is equivalent to the strongly invariance of the system $(G,\{1\}\times f).$ 
\end{lemma}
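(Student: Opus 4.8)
The plan is to invoke the characterization of strong invariance for differential inclusions from \cite[Theorem 2]{Donchev} (or equivalently the classical Nagumo-type viability/invariance theorems), and to verify that condition \textbf{(H0)} supplies exactly the structural hypotheses that this theorem requires of the right-hand side $\{1\}\times f(\varphi,U(t))$. Recall that the general principle is: for a differential inclusion $\dot v(t)\in F(t,v(t))$ with $F$ having nonempty compact convex values, measurable in $t$, upper semicontinuous (or Lipschitz) in $v$, and with a linear growth bound, strong invariance of a closed set $G$ is equivalent to the tangency condition $F(t,v)\subset T_G(v)$ holding for a.e.\ $t$ and all $v\in G$. Here $F(t,v)=F(t,(s,\varphi)):=\{1\}\times f(\varphi,U(t))$, and $G=[0,T]\times K$ with $K\subset\V$ compact, so the statement to be proved is precisely this equivalence specialized to our $F$ and $G$.

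The key steps, in order: (1) Identify $F(t,(s,\varphi)) = \{1\}\times f(\varphi, U(t))$ as the multifunction governing \eqref{dyna2}, and note that by Remark \ref{F.CC.L2} the values $f(\varphi,U(t))$ are nonempty, compact and convex in $\V$ for a.e.\ $t$, hence so are the values of $F$ in $\mathbb{R}\times\V$. (2) Check the regularity in the state variable: from \eqref{H'} (which follows from \textbf{(H0)}), $\varphi\mapsto f(\varphi,u)$ is Lipschitz uniformly over $u\in\mathbf{U}$, and since $U(t)$ is compact one obtains that $\varphi\mapsto f(\varphi,U(t))$ is Lipschitz continuous in the Hausdorff metric for a.e.\ $t$; in particular it is continuous, which is more than the upper semicontinuity required by \cite{Donchev}. (3) Check measurability in $t$: the $\mathcal L\times\mathcal B^m$-measurability of $\mathrm{Gr}\,U(\cdot)$ from \textbf{(H0)}(i), together with continuity of $f$ in $(x,u)$, gives $\mathcal L\times\mathcal B$-measurability of $\mathrm{Gr}\,F$ and hence measurability of $t\mapsto F(t,v)$. (4) Check the growth condition: \eqref{H'} gives $\|w\|\le 1+C_u(1+\|\varphi\|)$ for $w\in F(t,(s,\varphi))$, a linear growth bound in $v$, which guarantees global existence of trajectories of \eqref{dyna2} on $[t_0,T]$ so that the notion of strong invariance is well posed. (5) With these verified, apply \cite[Theorem 2]{Donchev}: the tangency inclusion $\{1\}\times f(\varphi,U(t))\subset T_G(t,\varphi)$ on a full-measure set $I$ (condition \textbf{(E3)}) holds if and only if $(G,\{1\}\times f)$ is strongly invariant. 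Since on the compact set $K$ the bound \eqref{H'} makes $C_u$ and $K_u$ uniform, all constants in the hypotheses can be taken uniform over $G$, which is what the cited theorem needs.

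The main obstacle I anticipate is the verification that $\varphi\mapsto f(\varphi,U(t))$ is continuous (Lipschitz) as a set-valued map into the metric space of compact subsets of the \emph{infinite-dimensional} Hilbert space $\V$, and more importantly that the hypotheses of \cite{Donchev} — which may be stated for $\mathbb R^n$ or for a general Banach space — genuinely apply verbatim in $\V$; one must confirm the cited result is the Banach-space (or Hilbert-space) version and that compactness of $G$ (rather than mere closedness) is either assumed or not needed. A secondary subtlety is that the pointwise-in-$\omega$ Lipschitz and growth bounds \eqref{fLipschitz}, \eqref{fbounded} must be correctly lifted to $\V$-norm bounds via \eqref{H'}; this is essentially the content of the preceding lemma and is routine, but care is needed because $U(t)$ varies with $t$, so the Lipschitz modulus of $\varphi\mapsto f(\varphi,U(t))$ is $K_u$ with $u$ ranging over the compact set $\mathbf U$, giving a uniform constant $K_{\mathbf U}:=k_0+(\sup_{u\in\mathbf U}|u|)\sum_{i=1}^m k_i$. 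Once these points are settled, the equivalence is a direct citation.
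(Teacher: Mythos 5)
Your proposal is correct and follows essentially the same route as the paper, which itself offers no argument beyond citing \cite[Theorem 2]{Donchev} and relying on the fact that \textbf{(H0)} (via \eqref{H'} and Remark \ref{F.CC.L2}) supplies the compact convex values, measurability in $t$, uniform Lipschitz continuity in $\varphi$ over the compact control set $\mathbf{U}$, and linear growth that Donchev's equivalence requires. The only slight imprecision is your remark that Donchev needs merely upper semicontinuity in the state: the equivalence of tangency with \emph{strong} invariance genuinely requires a (one-sided) Lipschitz-type condition, but since you verify the full Lipschitz property anyway, the application stands.
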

\noindent\textbf{  4. Additional regularity.}
\begin{equation*}
  \textbf{(E4)}  \begin{aligned}
      \quad &  \text{Suppose that $\Omega \subset \mathbb{R}^d$ and that $\mu$ is the Lebesgue measure.}\\ 
      & \text{Let $\varphi \in H^1(\Omega; \mathbb{R}^n)$, and assume that the modulus of continuity } \\
      & \text{given in Lemma \ref{cont.traj}-(2) satisfies $\lim_{h \to 0^{+}} \theta_f(h)/h = 0$.}
    \end{aligned}
    \end{equation*}
If $x \in C([t_0,T]; \Vspace)$ is the solution of \eqref{trajectory} for some control $u \in \mathcal{U}_{ad}$, then, from \eqref{H'} and
 Gronwall's inequality, for all $t \in [t_0,T]$ and all $\omega_1, \omega_2 \in \Omega$, we have that
\begin{equation}\label{quotients}
    |x(t,\omega_1) - x(t,\omega_2)| \leq C |\varphi(\omega_1) - \varphi(\omega_2)| + C \theta_f(|\omega_1 - \omega_2|),
\end{equation}
for some positive constant $C.$ Moreover, from difference quotients (see \cite{evans2010}), it follows that $x(t, \cdot) \in H^1(\Omega; \mathbb{R}^n).$ By the Rellich–Kondrachov Theorem, the embedding $H^1(\Omega; \mathbb{R}^n) \hookrightarrow \Vspace $ is compact. With this in mind, consider a sequence of controls $\{u_k\}_k \subset \mathcal{U}_{ad} $ that converges to $\bar{u} $ in the weak*-topology of $L^\infty(t_0,T; \mathbb{R}^m) $. Then, for all $t \in [t_0,T] $, the corresponding sequence of trajectories $\{x_k(t, \cdot)\}_k \subset H^1(\Omega; \mathbb{R}^n) $ is bounded (by Lemma \ref{x.ineq} in Appendix \ref{AppA2} and \eqref{quotients}). 
Consequently, there exists a subsequence (using the same index) such that $$x_k(t, \cdot) \rightarrow x^*(t, \cdot) \ \text{strongly in} \ \Vspace$$ for some $x^*(t, \cdot)\in \Vspace.$ 
\\
\begin{theorem}[Existence of optimal control in the infinite-dimensional case] Suppose that \textbf{  (H0)} and \textbf{  (Ei)} hold for some $i\in \{1,2,3,4\},$ then, if there is at least one admissible process for the problem $(P) $, the latter admits an optimal solution.
\end{theorem}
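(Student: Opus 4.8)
The plan is to mimic the proof of Theorem~\ref{Existence} from the finite-dimensional case, replacing the sole step that breaks down—the sequential compactness of admissible trajectories furnished by \cite[Theorem 23.2]{Clarke}—with an argument tailored to whichever condition \textbf{(Ei)} is assumed. First I would fix a minimizing sequence of admissible processes $(u_k,\{x_k(\cdot,\omega):\omega\in\Omega\})_k$; this exists because, by \textbf{(H0)}(iii), $g$ is bounded, so $\inf(P)$ is finite, and by hypothesis the admissible set is nonempty. Since the controls take values in the fixed compact set $\textbf{U}$, the sequence $\{u_k\}_k$ is bounded in $L^\infty(t_0,T;\mathbb R^m)$, hence (after passing to a subsequence, not relabeled) $u_k\overset{*}{\rightharpoonup}\bar u$ for some $\bar u\in\mathcal U_{\rm ad}[t_0,T]$, using that $\mathcal U_{\rm ad}$ is weak-* closed (a standard consequence of convexity of $U(t)$ and measurability of $\mathrm{Gr}\,U(\cdot)$ in \textbf{(H0)}(i)).

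Next I would produce a limit trajectory. The key point is to obtain, for each $t\in[t_0,T]$, strong convergence $x_k(t,\cdot)\to\bar x(t,\cdot)$ in $\Vspace$ along a common subsequence. Under \textbf{(E4)} this is exactly what the paragraph preceding the theorem establishes via the uniform estimate \eqref{quotients}, membership $x_k(t,\cdot)\in H^1(\Omega;\mathbb R^n)$ from difference quotients, and the compact Rellich--Kondrachov embedding $H^1(\Omega;\mathbb R^n)\hookrightarrow\Vspace$; one then passes to a diagonal subsequence over a countable dense set of times and upgrades to all $t$ using equicontinuity in $t$ (from \eqref{H'} and Gronwall, the maps $t\mapsto x_k(t,\cdot)$ are uniformly Lipschitz into $\Vspace$). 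Under \textbf{(E2)} the set-valued right-hand side takes values in a fixed compact $K_R$ of $\Vspace$, so the trajectories lie in a compact, equi-Lipschitz family and a generalized Arzel\`a--Ascoli argument (as in \cite{A.P.S,Frankowska1992}) yields a uniformly convergent subsequence; under \textbf{(E1)} the compact-semigroup smoothing gives the analogous compactness via the mild-solution representation and the Dunford--Pettis/Aubin--Lions machinery of \cite{Frankowska1992,liYong}; under \textbf{(E3)} Lemma~\ref{strong.inv} confines all trajectories to the fixed compact $K$, reducing again to the compact-range situation. In every case one extracts $x_k(\cdot)\to\bar x(\cdot)$ in $C([t_0,T];\Vspace)$.

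Then I would identify $\bar x$ as the trajectory driven by $\bar u$: writing \eqref{trajectory} in integral form for $x_k$, the drift terms $\int_{t_0}^t f_0(x_k(\sigma,\cdot),\cdot)\,d\sigma$ pass to the limit by the Lipschitz bound \eqref{fLipschitz} and strong convergence of $x_k$, while the bilinear terms $\int_{t_0}^t f_i(x_k(\sigma,\cdot),\cdot)u_{k,i}(\sigma)\,d\sigma$ pass to the limit by combining strong convergence of $f_i(x_k(\sigma,\cdot),\cdot)\to f_i(\bar x(\sigma,\cdot),\cdot)$ in $\Vspace$ with weak-* convergence $u_{k,i}\overset{*}{\rightharpoonup}\bar u_i$ (the usual strong-times-weak argument; Remark~\ref{F.CC.L2} and the convexity it records are what make this legitimate, exactly as the convexified velocity sets are used in the finite-dimensional proof). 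Hence $\bar x\in S(\varphi)$ with control $\bar u$, so $(\bar u,\{\bar x(\cdot,\omega):\omega\in\Omega\})$ is admissible. Finally, $x_k(T,\cdot)\to\bar x(T,\cdot)$ in $\Vspace$ implies $x_k(T,\cdot)\to\bar x(T,\cdot)$ in $\mu$-measure, and the Lipschitz-in-$x$, bounded cost $g$ together with dominated convergence give $J[u_k]\to J[\bar u]$, so $\bar u$ attains the infimum.

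The main obstacle is the compactness step: unlike the finite-dimensional case, bounded sets of $\Vspace$ are not relatively compact, so each of the four structural hypotheses \textbf{(Ei)} must be shown to restore enough compactness (either a compact spatial range for the velocities, or a compact smoothing operator, or a compact invariant region) to run a C([t_0,T];\Vspace) Arzel\`a--Ascoli argument; the cleanest presentation is to treat \textbf{(E4)} in full (since the groundwork is already laid above) and to indicate for \textbf{(E1)}--\textbf{(E3)} that the cited results of \cite{Frankowska1992,A.P.S,liYong,Donchev} supply precisely the missing sequential compactness, after which the limit-identification and cost-continuity arguments are identical across all cases.
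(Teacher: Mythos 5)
Your proposal is correct and follows essentially the same route as the paper: extract a weak-* convergent subsequence of controls, obtain pre-compactness of the trajectories in $C([t_0,T];\Vspace)$ from whichever condition \textbf{(Ei)} is assumed (strong invariance for \textbf{(E3)}, Arzel\`a--Ascoli combined with the compact range, compact semigroup, or Rellich--Kondrachov embedding otherwise), identify the limit trajectory, and pass to the limit in the cost. The only presentational difference is that you identify the limit directly via the strong-times-weak-* passage to the limit in the integral equation (legitimate thanks to the affine structure and the convexity recorded in Remark \ref{F.CC.L2}), whereas the paper invokes Filippov's Lemma to get closedness of the trajectory set, obtaining an infinite-dimensional analogue of the Clarke compactness theorem, and then refers back to the argument of Theorem \ref{Existence}.
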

\begin{proof}
If we assume condition  \textbf{  (E3)}, from Lemma \ref{strong.inv} it follows directly  the pre-compactness of $S(\varphi)$ in $C([t_0,T]:\Vspace)$. On the other hand, by assuming one of the conditions \textbf{  (E1),(E2),} or \textbf{  (E4)} above, along with the Arzelà-Ascoli Theorem, we obtain the pre-compactness of $S(\varphi).$ Furthermore, Remark \ref{F.CC.L2} and Filippov's Lemma
ensure the closedness of the set of trajectories, {\it i.e.,} we derive an infinite-dimensional version of \cite[Theorem 23.2]{Clarke}. Consequently, the existence of optimal trajectories is established by following an argument analogous to that used in the proof of Theorem \ref{Existence}. 
\end{proof}

\subsection{Necessary optimality conditions}
To establish the necessary optimality conditions for the problem $(P)$ with uncertain initial condition $\varphi \in \Vspace$, we will use technical lemmas whose proofs are included in Appendix \ref{AppA}. 
%\begin{definition}[\em Local minimizer]
% A process  $(\bar{u},\bar{x})$, is said to be an {\it $W^{1,1}$-local minimizer} for $(P)$ if there exists $\epsilon>0$ such that 
  %  \begin{equation*}
   %     \int_\Omega g(\bar{x}(T,\omega),\omega)d\mu(\omega)\leq
    %    \int_\Omega g(x(T,\omega),\omega)d\mu(\omega),
   % \end{equation*}
   % for all admissible processes $(x,u)$  such that
   % \begin{equation*}
    %    \| \bar{x}  - x \|_{W^{1,1}([0,T];\Vspace)} <\epsilon.
   % \end{equation*}
%\end{definition}

\begin{lemma}[Differentiability of the reduced cost]\label{FJlemma}
 Suppose that $g$ is differentiable with respect to $\bar x,$ and that $\nabla_xg(\cdot,\cdot)$ is continuous. Set $\delta>0.$ Then for all $\varphi\in \bar{x}(T,\cdot) + \delta\mathbb{B},$  the {\it reduced cost functional} $$\mathcal{J}(\varphi):= \int_\Omega g(\varphi(\omega),\omega)d\mu(\omega)$$ is Fréchet differentiable in $\varphi$, and the derivative of $\mathcal{J}$ in $\varphi$ is the functional $D\mathcal{J}(\varphi)\in\Vspace^*$ given by

\begin{equation}\label{FréchetJ}D\mathcal{J}(\varphi)\psi=%\langle\nabla_xg(\varphi),\psi \rangle=
 \int_{\Omega}  \nabla_x g(\varphi(\omega),\omega)\cdot \psi(\omega) \,d\mu(\omega).
 \end{equation}
Note that, by the Riesz Representation Theorem, $\nabla_xg(\varphi(\cdot),\cdot) \in \Vspace $ is the unique representative of the derivative of $\mathcal{J}$. Thus, we denote the derivative as $D\mathcal{J}(\varphi) := \nabla_x g(\varphi).$

\end{lemma}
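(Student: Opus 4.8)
The plan is to verify the two assertions of the lemma separately: first that $\mathcal{J}$ is Fréchet differentiable with the stated derivative formula, and then that the Riesz representative is exactly $\nabla_x g(\varphi(\cdot),\cdot)$ viewed as an element of $\Vspace$. For the differentiability, I would fix $\varphi \in \bar x(T,\cdot) + \delta\mathbb{B}$ and consider a perturbation $\psi \in \Vspace$ with $\|\psi\|_{L^2}$ small (small enough that $\varphi + \psi$ still lies in the $\delta$-ball, using the fundamental theorem of calculus along the segment), and write the increment
\begin{equation*}
\mathcal{J}(\varphi+\psi) - \mathcal{J}(\varphi) - \int_\Omega \nabla_x g(\varphi(\omega),\omega)\cdot\psi(\omega)\,d\mu(\omega)
= \int_\Omega \Big( g(\varphi(\omega)+\psi(\omega),\omega) - g(\varphi(\omega),\omega) - \nabla_x g(\varphi(\omega),\omega)\cdot\psi(\omega) \Big)\,d\mu(\omega).
\end{equation*}
Applying the mean value theorem pointwise in $\omega$, the integrand equals $\big(\nabla_x g(\varphi(\omega)+s(\omega)\psi(\omega),\omega) - \nabla_x g(\varphi(\omega),\omega)\big)\cdot\psi(\omega)$ for some $s(\omega)\in[0,1]$; bounding this by Cauchy–Schwarz gives an estimate of the form $\int_\Omega \eta(\omega)|\psi(\omega)|\,d\mu(\omega)$ where $\eta(\omega)$ is the modulus of the difference of the gradients.

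The key step is then to show this integral is $o(\|\psi\|_{L^2})$. I would argue that the remainder is bounded by $\|\eta\|_{L^2}\,\|\psi\|_{L^2}$ and that $\|\eta\|_{L^2} \to 0$ as $\|\psi\|_{L^2}\to 0$. This requires a uniform-continuity / dominated-convergence argument: along any sequence $\psi_j \to 0$ in $\Vspace$ one passes to a subsequence converging $\mu$-a.e., so $\varphi(\omega) + s_j(\omega)\psi_j(\omega) \to \varphi(\omega)$ for a.e. $\omega$, hence by continuity of $\nabla_x g(\cdot,\cdot)$ the integrand $\eta_j(\omega)\to 0$ a.e.; combined with a uniform bound on $\nabla_x g$ over the compact $\delta$-tube $\overline{\bar x(T,\cdot)+\delta\mathbb{B}}$ (which holds on the relevant bounded region, using continuity of the gradient together with the boundedness of $\bar x(T,\cdot)$), dominated convergence yields $\|\eta_j\|_{L^2}\to 0$. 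Since every subsequence has a further subsequence along which this holds, the full limit is zero, establishing Fréchet differentiability with the claimed derivative. Linearity and boundedness of $\psi \mapsto \int_\Omega \nabla_x g(\varphi(\omega),\omega)\cdot\psi(\omega)\,d\mu(\omega)$ as an element of $\Vspace^*$ is immediate from Cauchy–Schwarz, provided $\nabla_x g(\varphi(\cdot),\cdot)\in\Vspace$, which again follows from the uniform bound on the gradient.

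For the last assertion, the Riesz Representation Theorem identifies $\Vspace^*$ with $\Vspace$ via the scalar product $\langle\cdot,\cdot\rangle$; since formula \eqref{FréchetJ} expresses $D\mathcal{J}(\varphi)\psi = \langle \nabla_x g(\varphi(\cdot),\cdot),\psi\rangle$ and $\nabla_x g(\varphi(\cdot),\cdot)$ is an element of $\Vspace$, uniqueness of the Riesz representative gives the identification $D\mathcal{J}(\varphi) = \nabla_x g(\varphi)$ directly. I expect the main obstacle to be the care needed in the $o(\|\psi\|_{L^2})$ estimate: one cannot appeal to a global Lipschitz bound on $\nabla_x g$ (it is only assumed continuous), so the argument must genuinely localize to a bounded region where $\nabla_x g$ is uniformly continuous and bounded, and then push the a.e.-convergence through an integral via dominated convergence and the subsequence principle. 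The restriction $\varphi \in \bar x(T,\cdot)+\delta\mathbb{B}$ and the boundedness of $\bar x(T,\cdot)$ (from Lemma \ref{cont.traj}) are exactly what make this localization legitimate.
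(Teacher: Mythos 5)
Your overall skeleton — fundamental theorem of calculus / mean value theorem along the segment, Cauchy--Schwarz, a limit passage inside the integral, and Riesz representation at the end — is the same as the paper's proof, which writes the increment as $\int_\Omega\int_0^1\big[\nabla_x g(\varphi(\omega)+\sigma\varepsilon\psi(\omega),\omega)-\nabla_x g(\varphi(\omega),\omega)\big]\cdot\psi(\omega)\,d\sigma\,d\mu(\omega)$ and concludes from continuity of $\nabla_x g$; your treatment of the limit is in fact more explicit. However, there is a genuine flaw in the step where you justify the domination: you invoke a uniform bound on $\nabla_x g$ over ``the compact $\delta$-tube $\overline{\bar x(T,\cdot)+\delta\mathbb{B}}$'', arguing from continuity of the gradient together with boundedness of $\bar x(T,\cdot)$. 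In this lemma $\mathbb{B}$ is the unit ball of $\Vspace$, so $\varphi\in\bar x(T,\cdot)+\delta\mathbb{B}$ only constrains $\varphi-\bar x(T,\cdot)$ in the $L^2$-norm: it does not confine the pointwise values $\varphi(\omega)+s(\omega)\psi(\omega)$ to any bounded subset of $\mathbb{R}^n$, the $L^2$-ball is not compact, and in the uncertain-initial-condition setting where this lemma is applied $\bar x(T,\cdot)$ is merely an $L^2$ function (Lemma \ref{cont.traj}-(i) gives pointwise boundedness only when $\varphi\equiv x_0$). So continuity of $\nabla_x g$ alone produces no dominating function, and the dominated-convergence step fails as written.

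The gap is repairable, and the repair is what the paper implicitly relies on: by \textbf{(H0)}-(iii) each $g(\cdot,\omega)$ is globally Lipschitz with constant $k_g$ independent of $\omega$, hence $|\nabla_x g(x,\omega)|\le k_g$ for all $x$ and $\omega$ wherever the gradient exists; the constant $2k_g$ then dominates your $\eta_j$, and the subsequence/a.e.-convergence argument goes through. (Your parenthetical remark that one cannot appeal to a global Lipschitz bound on $\nabla_x g$ conflates Lipschitzness of the gradient, which indeed is not assumed, with boundedness of the gradient, which does follow from the Lipschitzness of $g$ itself.) A minor further point: defining $\eta$ through a pointwise mean-value point $s(\omega)$ raises a measurability issue; either keep the integral form of the remainder, as the paper does, or replace it by the supremum over $s\in[0,1]$, which is measurable by continuity. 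With these corrections your argument is sound and in fact slightly sharper than the paper's displayed computation, which fixes $\psi$ with $\|\psi\|_{L^2}=1$ and lets $\varepsilon\to0^+$ (a Gateaux-type limit), whereas you estimate the remainder for arbitrary $\psi\to0$ in $\Vspace$, which is what Fréchet differentiability actually requires.
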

\begin{proof} See Appendix \ref{AppA2}.
\end{proof}
\begin{remark}
    It is straightforward to verify that under conditions \textbf{(H0)} and \textbf{(H1)}, for some control $u,$ the function defined in \eqref{dyna.infinito} is Fréchet differentiable, and its derivative in $\varphi\in \Vspace$ is the linear map $\frac{\partial f}{\partial x}(\varphi,u):\Vspace \rightarrow \Vspace $  given by {\small $$
    \frac{\partial f}{\partial x}(\varphi,u)\,\psi=\left(  \frac{\partial f_0}{\partial x}(\varphi(\cdot),\cdot) + \sum_{i=1}^m \frac{\partial f_i}{\partial x}(\varphi(\cdot),\cdot)u_i\right)\psi(\cdot)
    $$}
    where $\frac{\partial f}{\partial x}$ is the Jacobian of the function defined in \eqref{dyna.finito}. Additionally, 
    note that $\left(\frac{\partial f}{\partial x}(\varphi(\omega),u)\right)^*=\left(\frac{\partial f}{\partial x}(\varphi(\omega),u)\right)^\top$ for a.e. $\omega \in \Omega.$
\end{remark}
Let us use $(\bar{x},\bar{u})$ to denote %:=(\bar{u}, \{\bar{x}(\cdot, \omega): \text{}\omega \in \Omega\})
  an optimal pair for problem $(P)$ with initial time $t_0$ and data $\varphi.$ Consider the linear problem
\begin{equation}\label{G(s,t)}
\left\{\begin{array}{l}
\frac{\partial G}{\partial s}(s, t)=\left(\frac{\partial f}{\partial x}(\bar{x}(s), \bar{u}(s))\right) G(s, t), \quad t_0\leq t \leq s \leq T, \\
G(t, t)=I.
\end{array}\right.
\end{equation}
 We denote by $G(s, t)$ the solution operator which is, in fact, the evolution operator (or fundamental matrix solution) of the linearization of \eqref{dynamics} along $(\bar{x}, \bar{u})$. In the following, $G^*(s, t)$ is the adjoint of $G(s, t)$.
 
We obtain the following first order necessary optimality condition, for which a part of its proof is based on results from \cite{Frankowska1992}.

\begin{theorem}\label{pmpInf}
     Assume that  \textbf{  (H0)} and \textbf{  (H1)} 
hold. Let $(\bar{x},\bar{u})$ be a {\it $W^{1,1}$-local minimizer} for $(P)$ and set the functional $q:= \nabla_x g(\bar{x}(T)).$  Then, the function $$p(t):=-G^*(T, t) q \in W^{1,1}([0,T]; \Vspace)$$ is the solution of the backward problem %\SA{VERIFICAR REGULARIDADE DE p(t)}
\begin{equation}\label{backward}
\left\{ \begin{array}{l}
 \displaystyle -\dot p(t)=\left(\frac{\partial f}{\partial x}(\bar{x}(t), \bar{u}(t))\right)^* p(t), \quad t \in\left[t_0, T\right] \\
-p(T)=q,
\end{array}\right.
\end{equation}
and satisfies the Maximum Principle
\begin{equation}\label{Max.pri}
    \langle p(t), f( \bar{x}(t), \bar{u}(t))\rangle=\max _{u \in  U(t)}\langle p(t), f( \bar{x}(t), u)\rangle
    \end{equation}
for a.e. $t \in\left[t_0, T\right].$
\end{theorem}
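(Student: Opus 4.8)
The plan is to prove Theorem~\ref{pmpInf} by a Hilbert-space adaptation of the classical needle-variation argument, organized in three stages: (1) verification that $p(t):=-G^*(T,t)q$ is well-defined and solves the backward adjoint equation \eqref{backward}; (2) construction of needle (spike) variations of the control and computation of the first-order variation of the trajectory in $\Vspace$; (3) passing to the variational inequality and using the convexity of $U(t)$ together with a density/Lebesgue-point argument to obtain the pointwise maximum condition \eqref{Max.pri}.

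First I would treat the adjoint equation. Under \textbf{(H0)} and \textbf{(H1)}, the map $s\mapsto \frac{\partial f}{\partial x}(\bar x(s),\bar u(s))$ is a bounded (by $C_f^1$-type constants) strongly measurable family of operators on $\Vspace$, so the linear Cauchy problem \eqref{G(s,t)} has a unique mild/strong solution $G(s,t)$ which is a bounded evolution operator with the usual semigroup identity $G(s,\tau)G(\tau,t)=G(s,t)$ and $\|G(s,t)\|\le e^{K_{\bar u}(s-t)}$. Its adjoint $G^*(s,t)$ then satisfies, in the backward sense, $-\frac{\partial}{\partial t}G^*(s,t)=\big(\frac{\partial f}{\partial x}(\bar x(t),\bar u(t))\big)^* G^*(s,t)$ with $G^*(s,s)=I$; differentiating $t\mapsto -G^*(T,t)q$ gives \eqref{backward}, and $p(\cdot)\in W^{1,1}([0,T];\Vspace)$ because $\dot p$ is the product of an essentially bounded operator-valued function with a continuous $\Vspace$-valued function. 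The terminal condition $-p(T)=q=\nabla_x g(\bar x(T))$ is immediate; Lemma~\ref{FJlemma} is what makes $q\in\Vspace$ meaningful.

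Next, the variational core. Fix a Lebesgue point $\tau\in(t_0,T)$ of the relevant $L^1$ maps and a value $v\in U(\tau)$; for small $\epsilon>0$ let $u_\epsilon$ agree with $\bar u$ off $[\tau-\epsilon,\tau]$ and equal $v$ there. Since $U(t)$ is convex and admissible controls form a convex set one may also take the convex variation $u_\epsilon=\bar u+\epsilon(w-\bar u)$ for $w\in\mathcal U_{\rm ad}$; I would use whichever is cleaner, but the spike variation is what yields the pointwise statement. Using the Lipschitz and boundedness bounds \eqref{H'} and Gronwall, the trajectory $x_\epsilon$ stays within the $\delta$-tube of $\bar x$ for $\epsilon$ small, so \textbf{(H1)} applies, and one shows $x_\epsilon(T)=\bar x(T)+\epsilon\, y(T)+o(\epsilon)$ in $\Vspace$, where the variation $y$ solves the linearized equation and hence
\begin{equation*}
y(T)=G(T,\tau)\big(f(\bar x(\tau),v)-f(\bar x(\tau),\bar u(\tau))\big).
\end{equation*}
This is the step I expect to be the main obstacle: establishing differentiability of $\epsilon\mapsto x_\epsilon$ at $0^+$ with remainder $o(\epsilon)$ in the $C([t_0,T];\Vspace)$-norm, uniformly enough to differentiate the cost, without any semigroup smoothing — one must control the difference quotient directly via the integral equation \eqref{trajectory}, Fréchet differentiability of $f$ in $\varphi$ (the Remark after Lemma~\ref{FJlemma}), dominated convergence in $\omega$, and Gronwall, carefully handling the shrinking interval $[\tau-\epsilon,\tau]$.

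Finally, from $W^{1,1}$-local optimality, $\mathcal J(\bar x(T))\le \mathcal J(x_\epsilon(T))$ for small $\epsilon$; dividing by $\epsilon$ and letting $\epsilon\to0^+$, Lemma~\ref{FJlemma} gives $\langle q, y(T)\rangle\ge 0$, i.e.
\begin{equation*}
\big\langle q,\ G(T,\tau)\big(f(\bar x(\tau),v)-f(\bar x(\tau),\bar u(\tau))\big)\big\rangle\ge 0 ,
\end{equation*}
and moving $G(T,\tau)$ to the other side with $p(\tau)=-G^*(T,\tau)q$ yields $\langle p(\tau),\,f(\bar x(\tau),\bar u(\tau))\rangle\ge\langle p(\tau),\,f(\bar x(\tau),v)\rangle$. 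Since this holds for every $v\in U(\tau)$ and a.e.\ $\tau$, and since $f(\bar x(\tau),U(\tau))$ is compact (Remark~\ref{F.CC.L2}) so the supremum over $v$ is attained and measurable, a standard selection/separability argument (exhaust $U$ by a countable dense family of measurable selections, use that a.e.\ $\tau$ is simultaneously a Lebesgue point for all of them) upgrades the ``for each $v$, a.e.\ $\tau$'' statement to ``a.e.\ $\tau$, for all $v$'', giving \eqref{Max.pri}. I would borrow from \cite{Frankowska1992} precisely the measurable-selection bookkeeping and the evolution-operator estimates, and keep the needle-variation estimate self-contained.
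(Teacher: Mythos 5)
Your proposal is correct and follows essentially the same route as the paper: needle variations at Lebesgue points, a Gronwall-based first-order expansion of the perturbed trajectory in $\Vspace$, Fréchet differentiability of the reduced cost (Lemma \ref{FJlemma}), and transfer of the resulting inequality through the evolution operator via $p(t)=-G^*(T,t)q$, with the adjoint equation \eqref{backward} handled as in \cite{Frankowska1992}. The only difference is minor bookkeeping: the paper does not need your final countable-selection upgrade, since it takes Lebesgue points of $s\mapsto f(\bar x(s),\bar u(s))$ only and uses Lemma \ref{filipschitz} (Lipschitz continuity in time of $t\mapsto f(\bar x(t),v)$ for fixed $v$), so the needle estimate holds for every $v\in U(t)$ simultaneously at each such point.
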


\begin{proof}
The condition \eqref{backward} %\eqref{CO.INCLU} and \eqref{HD+} 
can be proven as in \cite[Theorem 3.1]{Frankowska1992}. The proof of inequality \eqref{Max.pri} is slightly different in our case since we are working within a purely nonlinear framework, therefore, we do not rely on semigroup theory. Instead, we use the regularity conditions imposed on the dynamic functions. 

Let us denote the set of Lebesgue points of the function $f( \bar{x}(\cdot), \bar{u}(\cdot))$ by 
    $$\mathcal{L}=\left\{ t \in[t_0, T] :\lim _{h \rightarrow 0^+} \frac{1}{h} \int_{t-h}^{t+h}|| f( \bar{x}(s), \bar{u}(s))-f( \bar{x}(t), \bar{u}(t)) ||_{L^2} ds=0\right\}.$$
We know that $\mathcal{L}$ has full measure in $[t_0,T].$ Fix $t\in\mathcal{L}$ and $v\in U(t),$ for sufficiently small $h>0$, we define the {\it needle perturbation} of $\bar{u}$ by 
    $$u_h(s)= \begin{cases}\bar{u}(s) & \text { if } s \in\left[t_0, T\right] \backslash[t-h, t], \\ v & \text { if } s \in[t-h, t]\end{cases}$$
    and let $x_h(\cdot)$ be the solution of \eqref{dynamics} with initial data $\varphi$ in time $t_0$ and control $u_h$. Let us consider the following linear problem    
    \begin{equation}\label{linearw}
    \left\{\begin{array}{l}w^{\prime}(s)=\frac{\partial f}{\partial x}( \bar{x}(s), \bar{u}(s)) w(s), \quad t \leq s \leq T, \\ \\w(t)=f( \bar{x}(t), v)-f(\bar{x}(t), \bar{u}(t)) .\end{array}\right.
    \end{equation}
Since $f$ is Fréchet differentiable with respect to $x,$ then, from \eqref{H'} the map $s\rightarrow\frac{\partial f}{\partial x}(\bar{x}(s), \bar{u}(s))$ is bounded, thus, \eqref{linearw} has a unique solution (see \cite{Pazy}) and from \eqref{G(s,t)} we have that 
\begin{equation}\label{w(s)}
w(s)=G(s,t)[f( \bar{x}(t), v)-f( \bar{x}(t), \bar{u}(t))] \ \text{ for all} \ s\in[t,T]. 
\end{equation}
First, we prove that $\left(x_h-\bar{x}\right)/h$ converges to $w$ as $h \rightarrow 0^+$, uniformly in $[t, T]$. Since $x_h(s)=\bar{x}(s)$ for all $s\in [t_0, t-h],$ then for all $s \in(t-h, t]$ it follows that
$$
\begin{aligned}x_h(s)-\bar{x}(s) & =\int_{t-h}^s \left[f\left( x_h(r), v\right)-f( \bar{x}(r), \bar{u}(r))\right] d r \\& =\int_{t-h}^s \left[f\left( x_h(r), v\right)-f( \bar{x}(r), v)\right]dr\\ & \hspace{3cm} +\int_{t-h}^s \left[f\left( \bar{x}(r), v\right)-f( \bar{x}(r), \bar{u}(r))\right]dr.
\end{aligned}
$$
Thus, from \eqref{H'} we have that 
$$
\begin{aligned}
\bigg|\bigg|& x_h(s)-\bar{x}(s) - \int_{t-h}^s \left[f\left(\bar{x}(r), v\right)-f( \bar{x}(r), \bar{u}(r))\right]dr\bigg|\bigg|_{L^2}
\\
&\leq \int_{t-h}^{s}K_u||x_h(r)-\bar{x}(r)||_{L^2} dr
\\
&\leq K_u\int_{t-h}^{s}\left|\left|x_h(r)-\bar{x}(r)-\int_{t-h}^r \left[f\left( \bar{x}(\sigma), v\right)-f( \bar{x}(\sigma), \bar{u}(\sigma))\right]d\sigma\right|\right|_{L^2} dr
\\
&  \hspace{2.9cm}+K_u\int_{t-h}^s\int_{t-h}^r \left|\left| f\left( \bar{x}(\sigma), v\right)-f(\bar{x}(\sigma), \bar{u}(\sigma))\right|\right|_{L^2}d\sigma\,dr.
\end{aligned}
$$
From \eqref{H'} and Lemma \ref{x.ineq} in Appendix \ref{AppA2}, it follows that
{\small $$
\int_{t-h}^s\int_{t-h}^r \left|\left| f\left(\bar{x}(\sigma), v\right)\right|\right|_{L^2}d\sigma dr \leq \int_{t-h}^t\int_{t-h}^t \left|\left| f\left( \bar{x}(\sigma), v\right)\right|\right|_{L^2}d\sigma dr \leq C_0h^2
$$}
and, similarly 
{\small $$
\int_{t-h}^s\int_{t-h}^r \left|\left| f\left( \bar{x}(\sigma), \bar{u}(\sigma)\right)\right|\right|_{L^2}d\sigma dr \leq C_1h^2,
$$}
for some positive constants $C_0$ and $C_1.$
Then,
$$
\begin{aligned}
&\left|\left| x_h(s)-\bar{x}(s) - \int_{t-h}^s \left[f\left( \bar{x}(r), v\right)-f(\bar{x}(r), \bar{u}(r))\right]dr\right|\right|_{L^2}
\\
&\leq  \int_{t-h}^{s}K_u\left|\left|x_h(r)-\bar{x}(r)-\int_{t-h}^r \left[f\left( \bar{x}(\sigma), v\right)-f( \bar{x}(\sigma), \bar{u}(\sigma))\right]d\sigma\right|\right|_{L^2} dr
\\
& \hspace{1cm}+ \bar{o}(h)
\end{aligned}
$$
where $\displaystyle \lim_{h\rightarrow 0^+}\bar{o}(h)/h=0.$ Applying Gronwall's inequality above, we obtain that
\begin{equation}\label{oh}
    \left|\left| x_h(s)-\bar{x}(s) - \int_{t-h}^s \left[f\left( \bar{x}(r), v\right)-f( \bar{x}(r), \bar{u}(r))\right]dr\right|\right|_{L^2}\leq o(h)
\end{equation}
for $s\in(t-h,t],$ where $\displaystyle \lim_{h\rightarrow 0^+}o(h)/h=0.$ On the other hand 
\small{
\begin{equation*}
    \begin{aligned}
 \left|\left|\int_{t-h}^t \left[f\left( \bar{x}(r), v\right)-f(\bar{x}(r), \bar{u}(r))\right]dr -h[f( \bar{x}(t), v)-f( \bar{x}(t), \bar{u}(t))]  \right|\right|_{L^2} 
    \end{aligned}
\end{equation*}}
\normalsize 
\begin{equation*}
    \begin{aligned}
    &\leq \int_{t-h}^t\left|\left| f\left( \bar{x}(r), v\right)-f( \bar{x}(t), v) \right|\right|_{L^2}\,dr\\
&\hspace{3.5cm}+\int_{t-h}^t\left|\left| f\left( \bar{x}(r), \bar{u}(r)\right)-f(\bar{x}(t), \bar{u}(t)) \right|\right|_{L^2}\,dr.
    \end{aligned}
\end{equation*}
From Lemma \ref{filipschitz} and since $t\in\mathcal{L}$ we have, respectively, that
$$
 \int_{t-h}^t\left|\left| f\left( \bar{x}(r), v\right)-f(\bar{x}(t), v) \right|\right|_{L^2}\,dr \leq C_2h^2
$$
and 
$$
\int_{t-h}^t\left|\left| f\left( \bar{x}(r), \bar{u}(r)\right)-f( \bar{x}(t), \bar{u}(t)) \right|\right|_{L^2}\,dr \leq o'(h)
$$
where $\displaystyle \lim_{h\rightarrow 0^+}o'(h)/h=0.$ The latter two inequalities, together with \eqref{oh}, lead to conclude that 
\begin{equation}\label{xh}
    x_h(t)=\bar{x}(t)+h[f( \bar{x}(t), v)-f( \bar{x}(t), \bar{u}(t))]+o(h),
\end{equation}
where $\displaystyle \lim_{h\rightarrow 0^+}o(h)/h=0.$ Finally, let $s\in[t, T].$ Then \eqref{xh} and the differentiability of solutions with respect to initial data imply that 
\begin{equation}\label{xh(s)}
||x_h(s)-\bar{x}(s)-hw(s)||_{L^2}\leq o(h), \ \text{for all} \ s\in [t,T].
\end{equation}
Since $\bar{x}$ is optimal, $\mathcal{J}(x_h(T))\geq\mathcal{J}(\bar{x}(T)),$ thus, from \eqref{xh(s)} and Lemma \ref{FJlemma}
\begin{equation}
\begin{aligned}
0&\leq \lim_{h\rightarrow0^+}\frac{\mathcal{J}(\bar{x}(T)+hw(T))-\mathcal{J}(\bar{x}(T))}{h}\\
&=\langle \nabla_x g(\bar{x}(T)),w(T) \rangle
=\langle q,w(T) \rangle,
\end{aligned}
\end{equation}
from \eqref{w(s)} we deduce that
$$
0\leq\langle\ q,G(T,t)[ f( \bar{x}(t), v)- f( \bar{x}(t), \bar{u}(t))]\rangle.
$$
Define $p(t)=-G^*(T, t) q.$ Thus,
\begin{equation*}
    \langle p(t), f( \bar{x}(t), v)\rangle \leq\langle p(t), f( \bar{x}(t), \bar{u}(t))\rangle
    \end{equation*}
    and \eqref{Max.pri} follows as $v\in U(t)$ is arbitrary. 
\end{proof}
\begin{remark}
In \cite{Frankowska1992} it is also proved that $p$ satisfies the co-state inclusion
\begin{equation*}
p(t) \in D_x^{+} V(t, \bar{x}(t)), \quad \forall t \in\left[t_0, T\right] 
\end{equation*}
and that for all $t \in \mathcal{L}$,
\begin{equation}\label{HD+}
    (H(t, \bar{x}(t), p(t)),-p(t)) \in D^{+} V(t, \bar{x}(t)),\end{equation}
where $V:[0, T] \times \Vspace \rightarrow \mathbb{R}\, \cup\,\{ \pm \infty\}$ is the value function associated to the problem $(P)$ defined as follows:
 \begin{equation*}\label{vf2}
 V \left(s, \varphi\right):=\inf_{u\in\mathcal{U}_{ad}} \Bigg\{ \int_{\Omega}g(x(T,\omega),\omega)\,d\mu(\omega): x \  \text{solves}\ \eqref{trajectory} \ \text{with } t_0=s \Bigg\},
 \end{equation*}
 and for $(t, \varphi) \in A$, where $A $ is an open subset of $[0,T] \times L^{2}(\mu, \Omega; \mathbb{R}^{n})$, the superdifferential of $V $ at $(t, \varphi)$, denoted by $D^+ V(t, \varphi) $, is defined as:
\begin{equation}\label{superdifferential}
\begin{aligned}
&D^{+}V(t,\varphi):=\Big\{(p_t, p_\varphi)\in \mathbb{R}\times \Vspace^{*}: \\
&\limsup_{(s,\varphi')\rightarrow (t,\varphi), \,(s,\varphi')\in A} \frac{V(s,\varphi')-V(t,\varphi)- \langle p_\varphi, \varphi'-\varphi\rangle -p_t(s-t)}{|s-t|+||\varphi'-\varphi||_{L^{2}}}\leq 0\Big\}.
\end{aligned}
\end{equation}
In \eqref{HD+}, $H:[0,T]\times \Vspace \times \Vspace^* \rightarrow \mathbb{R}$ is the Hamiltonian of the controlled system \eqref{dynamics}, defined by 
$$H(t, \varphi, p(t))=\sup _{u \in U(t)}\langle p(t), f(\varphi, u)\rangle.$$

Other works addressing necessary conditions with alternative functionals $J$ in
problem $(P)$ can be found, for example, in \cite{BonalliBonnet2023}. 
In this direction, one may consider alternative risk measures in problem $(P)$; 
for example, for some $\alpha \in (0,1]$ one can employ the
Conditional Value-at-Risk (CVaR) risk measure and consider the problem
$$
\min_{u \in \mathcal{U}_{\rm ad},\, s \in \mathbb{R}}
\left[
  s + \frac{1}{\alpha}
  \int_{\Omega} \max\{ g(x(T,\omega),\omega) - s , 0 \} \, d\mu(\omega)
\right].
$$
This formulation represents a ``here-and-now" approach,
whereas our setting follows a ``wait-and-see" framework. 

\end{remark}

%\begin{definition}
%Let $A\subset [0,T]\times L^{2}(\mu, \Omega; \mathbb{R}^{n})$ be an open set. Given $(t,\varphi)\in A$ and $W\colon A\rightarrow \mathbb{R}$ a continuous function, we define the superdifferential $D^+ W(t,\varphi)$ 

\section{Singular control characterization}\label{char}

In this section, we characterize the singular arcs of optimal controls of problem $(P).$ First, the scalar control case is analyzed, and a formula for the singular arcs is derived. Next, we proceed to investigate the case of vector-valued controls under the assumption that the system dynamics are commutative.
%\begin{remark}
%    We note that the following characterization is valid for both cases of known and unknown initial conditions.      
%\end{remark}
The following characterizations hold for both cases, known and unknown initial conditions, once they do not depend on the initial state of the system.

\subsection{Scalar control variable}

Throughout this subsection, we assume that the control variable is scalar, {\it i.e.,} $m=1.$ We define our control set as $U(t) := [u_{\min}, u_{\max}]$ for real numbers $u_{\min}< u_{\max},$ for all $t\in [0,T].$ The ODE in \eqref{dynamics} reads  
\begin{equation}\label{scalar.dyn}
\dot x(t,\omega) = f_0(x(t,\omega), \omega) + {u(t) f_1(x(t,\omega), \omega)}.   
\end{equation}
\begin{theorem}
[Characterization of the optimal control for the scalar control-affine case]
\label{sing_control_scalar}
Let assumptions \textbf{  (H0)} and \textbf{  (H1)} hold, and let $(\bar u, \{\bar{x}(\cdot,\omega) : \omega \in \Omega \})$ be a $W^ {1,1}$-minimizer for problem $(P)$, and consider  $p$ as in Theorem \ref{pmp} or Theorem \ref{pmpInf} (for the finite- and infinite-dimensional cases, respectively). Then, the optimal control $\bar u$ satisfies the conditions
\begin{equation}
\label{controls}
  \bar u(t) =
    \begin{cases}
      u_{\max} & \text{if } \Psi(t) > 0, \\
      u_{\min} & \text{if } \Psi(t) < 0, \\
      \mathrm{ singular } & \text{if } \Psi(t) = 0,
    \end{cases}       
\end{equation}
almost everywhere in $[0,T],$ where $\Psi$ is the {\it switching function} given by
\begin{equation}
\label{switching_func}
\Psi(t) := \int_\Omega p(t,\omega)f_1(\bar{x}(t,\omega),\omega) d\mu(\omega).
\end{equation}
Moreover, over any open interval $(a,b)$ on  which $\Psi = 0$, the control $\bar u$ satisfies
%\begin{equation}
%  \int_\Omega p[f_0,[f_0,f_1]] d\mu(\omega) + \bar u \int_\Omega p[f_1,[f_0,f_1]] d\mu(\omega) =0.  
%\end{equation}
\begin{equation}
    \label{control_formula}
     {\int_\Omega p[f_0,[f_0,f_1]] \,d\mu(\omega) + \bar u(t)\int_\Omega p[f_1,[f_0,f_1]] \,d\mu(\omega)=0.}
\end{equation}
\end{theorem}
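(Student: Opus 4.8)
The plan is to handle the two claims of the statement separately. The three‑regime formula \eqref{controls} is immediate from the Maximum Principle. Since \eqref{scalar.dyn} is affine in the scalar control, for a.e.\ $t$ the map
\[
u \longmapsto \int_\Omega p(t,\omega)\, f(\bar x(t,\omega),u,\omega)\, d\mu(\omega)
= \int_\Omega p(t,\omega)\, f_0(\bar x(t,\omega),\omega)\, d\mu(\omega) + u\,\Psi(t)
\]
is affine in $u$ with slope equal to the switching function $\Psi(t)$ from \eqref{switching_func} (in the infinite‑dimensional case one reads this as $\Psi(t)=\langle p(t),f_1(\bar x(t))\rangle$). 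Maximising it over $u\in[u_{\min},u_{\max}]$, which is exactly \eqref{pmp.finite} (resp.\ \eqref{Max.pri}), forces $\bar u(t)=u_{\max}$ when $\Psi(t)>0$, $\bar u(t)=u_{\min}$ when $\Psi(t)<0$, and leaves $\bar u(t)$ undetermined when $\Psi(t)=0$; this is \eqref{controls}.

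For the singular identity \eqref{control_formula} I would differentiate $\Psi$ twice along $(\bar x,\bar u)$. First I would record the regularity: \textbf{(H1)} gives $\nabla_x f$ bounded along $\bar x$, hence $p$ Lipschitz in $t$ by Gronwall; \textbf{(H1)}(iv) gives that $f_0,f_1$ and their first Lie bracket $[f_0,f_1]=f_1'f_0-f_0'f_1$ are $C^1$ with bounded derivatives near $\bar x$; and $\bar x$ is Lipschitz in $t$ (uniformly in $\omega$ in the finite‑dimensional case; as a curve into $\Vspace$ in the infinite‑dimensional case, by boundedness of $f$ and the integral equation \eqref{trajectory}). Thus $\Psi\in W^{2,\infty}_{\mathrm{loc}}$. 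Differentiating under $\int_\Omega\cdot\,d\mu$ — justified by dominated convergence and the uniform bounds, and in the infinite‑dimensional setting by the chain rule for the Fréchet‑differentiable maps $\varphi\mapsto f_i(\varphi)$ composed with the Lipschitz curve $t\mapsto\bar x(t)$ — and using the adjoint equation \eqref{pmp:adjoint} (resp.\ \eqref{backward}) together with $\dot{\bar x}=f_0(\bar x)+\bar u\,f_1(\bar x)$, the terms carrying $\bar u$ cancel and one gets
\[
\dot\Psi(t)=\int_\Omega p(t,\omega)\,[f_0,f_1](\bar x(t,\omega),\omega)\,d\mu(\omega).
\]
Running the same computation once more with $[f_0,f_1]$ in the role of $f_1$ — where now the $\bar u$‑terms do survive — yields
\[
\ddot\Psi(t)=\int_\Omega p\,[f_0,[f_0,f_1]]\,d\mu(\omega)
+\bar u(t)\int_\Omega p\,[f_1,[f_0,f_1]]\,d\mu(\omega),\qquad\text{a.e. }t.
\]

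To close the argument, on an open interval $(a,b)$ where $\Psi\equiv 0$ one has $\dot\Psi\equiv0$ on $(a,b)$ and $\ddot\Psi=0$ a.e.\ on $(a,b)$; substituting into the last display gives exactly \eqref{control_formula}. The algebra here is the classical switching‑function/Lie‑bracket computation for control‑affine systems, so I do not expect it to be the obstacle. The hard part will be the regularity bookkeeping that makes the two differentiations of $\Psi$ legitimate — especially, in the infinite‑dimensional case, giving meaning to $\tfrac{d}{dt}f_i(\bar x(t))$ as a derivative valued in $\Vspace$, which relies on $\bar x(\cdot)$ being Lipschitz into $\Vspace$ and on $f_i$ being $C^1$ from $\Vspace$ to $\Vspace$ with locally bounded derivative; assumption \textbf{(H1)}(iv), with its bound on the second derivatives, is precisely what licenses the second differentiation.
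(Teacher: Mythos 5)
Your proposal is correct and follows essentially the same route as the paper: the bang--bang/singular trichotomy comes directly from the affine-in-$u$ maximum condition \eqref{pmp.finite}/\eqref{Max.pri}, and the singular-arc identity is obtained by differentiating $\Psi$ twice under the integral sign, using the adjoint and state equations so that the $\bar u$-terms cancel at the first differentiation, with the uniform bounds from \textbf{(H1)} (the paper packages these in Lemma \ref{lemma_xp}) licensing the interchange of $d/dt$ and $\int_\Omega$. The only cosmetic difference is that you spell out the Gronwall/dominated-convergence bookkeeping that the paper delegates to Lemma \ref{lemma_xp} and to \cite{AronnaMonteiroSierra2025CDC}.
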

\begin{proof}
%\noindent{\it Proof of  Theorem \ref{sing_control_scalar}.}
From the maximum conditions \eqref{pmp.finite} and \eqref{Max.pri} of the Pontryagin Maximum Principle, stated in Theorem \ref{pmp} and Theorem \ref{pmpInf}, respectively, we obtain, that for a.e. $t \in [0,T]$,

\begin{equation}
    \bar{u}(t) \in \argmax_{u \in [u_{\min},u_{\max}]} \int_\Omega p(t,\omega) \big[f_0(\bar x(t,\omega),\omega) +  {u} f_1(\bar x(t,\omega),\omega)\big] \, d\mu(\omega),
\end{equation}
which gives
\begin{equation}
    \bar{u}(t) \in \argmax_{u \in [u_{\min},u_{\max}]}  u\,\Psi(t).
\end{equation}
Therefore, if $\Psi(t) \neq 0$, we obtain one of the two cases described in the first two rows of equation \eqref{controls}. {Assume now that} $\Psi(t) = 0$ {over an open interval $(a,b)\subset [0,T]$.} By  assumptions \textbf{(H0)}, \textbf{(H1)} and Lemma \ref{lemma_xp} in Appendix \ref{AppA}, {one has that $d (p f_1)/dt$} is uniformly bounded (see \cite{AronnaMonteiroSierra2025CDC}). Consequently, differentiation under the integral sign {in the expression of $\Psi$ in \eqref{switching_func}} is well-defined and we obtain that  
\begin{equation}
\label{first_derivative}
\dot \Psi  =
{\int_\Omega \frac{d (p f_1)}{dt}  \,d\mu(\omega)=}
\int_{\Omega}p(f_1' f_0 - f_0' f_1)\,d\mu(\omega)=\int_\Omega p[f_1,f_0]\,d\mu(\omega)=0,
\end{equation} 
where we have used the chain rule together with the differential equations \eqref{pmp:adjoint} (or \eqref{backward}) and \eqref{scalar.dyn}.
Now, one can see that
$$
 \frac{d (p[f_1,f_0])}{dt} = p[f_0,[f_0,f_1]] + up[f_1,[f_0,f_1]].
$$
From \textbf{(H1)}-(iv) together with Lemma \ref{lemma_xp}, we obtain that $ \frac{d (p[f_1,f_0])}{dt} $ is uniformly bounded.
Once again, the derivation under the integral sign {in \eqref{first_derivative}} is well defined, leading to 
\begin{equation}
\label{second_derivative}
    \ddot{\Psi} = \int_\Omega p[f_0,[f_0,f_1]] + up[f_1,[f_0,f_1]]\,d\mu(\omega) = 0.
\end{equation}
Then, formula \eqref{control_formula} holds.
\end{proof}

\subsection{Vector control variable and commutative dynamics}

In this subsection, we consider the case for the vector-valued control setting with {\it commutative} vector fields $f_i,$ for $i=1,\dots,m$.  This is, throughout this subsection we assume that
\begin{equation}
\label{vector field commutativity}
  [f_i,f_j] \equiv 0 \  \ \text{for } \  i,j =1,\dots,m,
\end{equation}
and 
$$
U(t):=\prod_{i=1}^{m}[u_{\min}^i,u_{\max}^i],
$$
{for  real numbers $u_{\min}^i < u_{\max}^i,$ for $i=1,\dots,m.$}

In order to present the characterization result, we first provide the definition for the set of singular indexes and other involved elements. {Throughout the remainder of the section, we fix a process $(\bar u, \{\bar{x}(\cdot,\omega) : \omega \in \Omega \})$, and we assume that whenever the argument of the function is omitted, it is evaluated at this process.}

\begin{definition}[Set of singular indexes]
For the problem $(P),$ we define the set of singular indexes as 
 \begin{equation}
    S(t) := \left\{ i\in \{1,\dots m\}: u_{\min,i} < \bar u_i (t) <u_{\max,i} \right\},
\end{equation}
{\it i.e.,} the set of indices corresponding to the coordinates of singular controls at a given time $t\in[t_0,T].$ Note that $S(t)$ is defined up to a zero-measure set.
\end{definition}
\begin{definition}[Set of bang indexes]
    For $t\in [t_0,T]$, we define the {\it sets of bang indexes} as
\begin{equation*}
    B_{\min}(t) := \left\{ i\in \{1,\dots m\}: \Psi_i (t)<0 \right\},
\end{equation*}
\begin{equation*}
    B_{\max}(t) := \left\{ i\in \{1,\dots m\}: \Psi_i (t)>0 \right\},
\end{equation*}
{where, for $i=1,\dots,m,$
we set
\(
\Psi_i(t) := \int_\Omega p(t,\omega)f_i(\bar{x}(t,\omega),\omega) d\mu(\omega).
\)
for the switching function corresponding to the control component $i.$}
\end{definition}
\begin{definition}
    Let $f_1,\dots,f_m$ be the vector fields of problem $(P)$ and set 
\begin{equation}\label{Wij}
W_{ij} := \int_\Omega p [f_i,[f_0,f_j]] d\mu(\omega), \text{ for } \,  i,j = 0,\dots,m, %\text{ and } j = 1,\dots,m.
\end{equation}
We consider the following time-dependent matrix of singular indexes $W_S(t):=[W_{ij}]_{i,j\in S(t)},$ which is involved in the characterization of the singular controls. 
%\in \mathbb R^{|S|\times|S|}
% where $|S|$ is the cardinality of the set $S(t).$
\end{definition}

We now present the theorem that characterizes the singular controls .

\begin{theorem}
\label{Thmfeedbackinfinite}
Let assumptions \textbf{  (H0)} and \textbf{  (H1)} hold, and let $(\bar u, \{\bar{x}(\cdot,\omega) : \omega \in \Omega \})$ be a $W^ {1,1}$-minimizer for problem $(P).$ Consider $p$ as in Theorem \ref{pmp} or Theorem \ref{pmpInf}. Additionally, suppose that the commutativity condition \eqref{vector field commutativity} holds. Then, on any open interval $(a,b)$ in which $S(t),$ $B_{\max}(t)$ and $B_{\min}(t)$ remain constant (and nonempty), one has:
\begin{equation}
\label{mult dim vector}
  { W_S \bar{u}_S + b_S =0, }
\end{equation}
where $\bar u_S$ is the vector of singular components of the control $\bar u$ and, for $i\in S(t),$ $
b_i :=  W_{0i} +  \sum_{j \in B_{\min}(t) } W_{ij} u_{\min,j}
    +\sum_{j \in B_{\max}(t) } W_{ij} u_{\max,j}.
$
\end{theorem}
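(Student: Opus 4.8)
The plan is to mimic the scalar-control argument of Theorem \ref{sing_control_scalar}, but carried out coordinate-wise and organized through the switching functions $\Psi_i$. First I would invoke the Maximum Principle (\eqref{pmp.finite} or \eqref{Max.pri}) to deduce that for a.e.\ $t$, $\bar u(t)$ maximizes $u \mapsto \sum_{i=1}^m u_i \Psi_i(t)$ over $U(t) = \prod_i [u_{\min}^i, u_{\max}^i]$. Since the objective separates across coordinates, this immediately forces $\bar u_i(t) = u_{\max}^i$ when $\Psi_i(t) > 0$ and $\bar u_i(t) = u_{\min}^i$ when $\Psi_i(t) < 0$; hence on the interval $(a,b)$ the components indexed by $B_{\max}(t)$ and $B_{\min}(t)$ are pinned to their bounds, and the only indeterminacy is in the components $i \in S(t)$, for which $\Psi_i \equiv 0$ on $(a,b)$.

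\textbf{Differentiating the singular switching functions.} For each $i \in S(t)$, since $\Psi_i$ vanishes identically on $(a,b)$, so do $\dot\Psi_i$ and $\ddot\Psi_i$. I would justify differentiation under the integral sign exactly as in Theorem \ref{sing_control_scalar}: assumption \textbf{(H1)}-(iv) together with Lemma \ref{lemma_xp} give uniform bounds on $d(pf_i)/dt$ and on the second derivative, so the derivatives pass inside $\int_\Omega \,d\mu(\omega)$. Using the adjoint equation \eqref{pmp:adjoint}/\eqref{backward}, the dynamics \eqref{dynamics}, and the commutativity hypothesis \eqref{vector field commutativity}, the first derivative computation reduces (the pointwise Lie-bracket identity $\frac{d}{dt}(p f_i) = p\,[f(\cdot,\bar u,\cdot),\,f_i]$, expanded via $f = f_0 + \sum_j \bar u_j f_j$ and the vanishing of $[f_j,f_i]$) to
\begin{equation*}
\dot\Psi_i = \int_\Omega p\,[f_0, f_i]\, d\mu(\omega) = 0, \qquad i \in S(t).
\end{equation*}
Differentiating once more, again killing all $[f_j, f_i]$ terms by commutativity, yields
\begin{equation*}
\ddot\Psi_i = \int_\Omega p\,[f_0,[f_0,f_i]]\, d\mu(\omega) + \sum_{j=1}^m \bar u_j(t) \int_\Omega p\,[f_j,[f_0,f_i]]\, d\mu(\omega) = 0.
\end{equation*}

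\textbf{Assembling the linear system.} In the notation $W_{ij} = \int_\Omega p\,[f_i,[f_0,f_j]]\,d\mu(\omega)$ the last identity reads $W_{0i} + \sum_{j=1}^m \bar u_j(t) W_{ji} = 0$ for every $i \in S(t)$; here I should be careful to match the index order in \eqref{Wij} (the relevant coefficient of $\bar u_j$ in $\ddot\Psi_i$ is $\int_\Omega p[f_j,[f_0,f_i]]\,d\mu = W_{ji}$, so one uses symmetry or simply the definition consistently). Splitting the sum over $j$ according to the partition $\{1,\dots,m\} = S(t) \cup B_{\min}(t) \cup B_{\max}(t)$ and substituting $\bar u_j = u_{\min}^j$ for $j \in B_{\min}(t)$ and $\bar u_j = u_{\max}^j$ for $j \in B_{\max}(t)$, the $B$-terms collect into the vector $b_S$ with components $b_i = W_{0i} + \sum_{j \in B_{\min}} W_{ij} u_{\min}^j + \sum_{j \in B_{\max}} W_{ij} u_{\max}^j$, while the $S$-terms give $W_S \bar u_S$; this is precisely \eqref{mult dim vector}.

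\textbf{Main obstacle.} The routine parts are the differentiation-under-the-integral justification (already handled by Lemma \ref{lemma_xp} and \textbf{(H1)}-(iv)) and the Lie-bracket bookkeeping. The genuine subtlety is the measurability/constancy hypothesis on $S(t)$, $B_{\min}(t)$, $B_{\max}(t)$ over $(a,b)$: one must check that on such an interval the set $S(t)$ being constant means $\Psi_i$ is identically zero there (not merely a.e.\ equal to something that happens to vanish at scattered points), so that $\dot\Psi_i$ and $\ddot\Psi_i$ vanish in the strong sense needed; this is where one leans on the continuity of $t \mapsto \Psi_i(t)$ inherited from the regularity of $p$ and $\bar x$, and on the fact that $\bar u_j$ is genuinely constant ($=u_{\min}^j$ or $u_{\max}^j$) on $(a,b)$ for the bang indices so that $\ddot\Psi_i$ is a well-defined a.e.\ identity with constant coefficients. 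Once that is in place, the linear-algebra assembly is immediate.
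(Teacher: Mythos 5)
Your proposal is correct and takes essentially the same route as the paper's proof: componentwise bang/singular classification via the switching functions $\Psi_i$, two differentiations under the integral sign justified by \textbf{(H1)}-(iv) and Lemma \ref{lemma_xp}, commutativity \eqref{vector field commutativity} killing the $[f_i,f_j]$ terms, and splitting the resulting identity over $S(t)$, $B_{\min}(t)$, $B_{\max}(t)$ to obtain \eqref{mult dim vector}. Your side remark on the index order ($W_{ji}$ versus $W_{ij}$) is well taken and is indeed settled by the Jacobi identity under commutativity, a point the paper passes over silently.
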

\begin{proof}
Analogously to Theorem \ref{sing_control_scalar}, the coordinates of the optimal control $\bar u$ satisfies the conditions
\begin{equation*}
  \bar u_i(t) =
    \begin{cases}
      u_{\max}^i & \text{if } \Psi_i(t) > 0, \\
      u_{\min}^i & \text{if } \Psi_i(t) < 0, \\
      \mathrm{ singular } & \text{if } \Psi_i(t) = 0,
    \end{cases}       
\end{equation*}
where the switching function associated to the coordinate $\bar u_i$ is defined as 
$$
\Psi_i := \int_\Omega p f_i d\mu(\omega).
$$
Let $(a,b)$ be an interval as in the statement and  $i\in \{1,\dots,m\}$ in $S(t)$ for $t\in (a,b).$ From condition \textbf{(H1)}-(iv) and Lemma \ref{lemma_xp}, we conclude that {$dp f_i/dt$} is uniformly bounded, then
%$$\left|\frac{d\varphi_i}{dt}\right| =\left|p[f_0,f_i]\right| \leq C,\quad  \text{ for } t \in [0,T].$$
%In this case performing the differentiation under the integral sign is permitted and results in 
$$
\dot \Psi_i= \int_\Omega p[f_0,f_i]d\mu(\omega)=0.
$$
Arguing as in Theorem \ref{sing_control_scalar}, we obtain that $d p[f_0,f_i]/dt$
%$$\frac{d\nu_i}{dt}  = p\left([f_0,[f_0,f_i]] + \sum_{j=1}^m u_j [f_j,[f_0,f_i]]\right) $$
is uniformly bounded; consequently, one gets that
\begin{equation*}
  \ddot \Psi_i=  \int_\Omega p\left([f_0,[f_0,f_i]] + \sum_{j=1}^m u_j [f_j,[f_0,f_i]]\right) d\mu(\omega)=0,
\end{equation*}
or equivalently,
\begin{equation}\label{ddotPsii}
    \ddot \Psi_i= W_{0i}+\sum_{j=1}^mW_{ij}u_j=0.
\end{equation}
Then, the expression \eqref{ddotPsii} can be rewritten as
\begin{equation}
\label{sing arc multi}
    b_i + \sum_{j \in S(t) } W_{ij} \bar{u}_j  = 0, \,\, \text{for } i \in S(t).
\end{equation}
The proof follows.
\end{proof}
\begin{remark}
  Supposing that the matrix $W$ is definite,  $W_S$ is a principal submatrix and thus non-singular. Therefore, the singular controls at time  $t$  can be expressed as
\begin{equation*}
    \bar{u}_S = -W^{-1}_S b_S.
\end{equation*}
\end{remark}
\section{Numerical examples}\label{NUM_EXMP}
This section shows numerical simulations that validate the theoretical results derived in previous sections. We begin with a discussion on convergence properties before conducting numerical experiments on two examples.

\subsection{Numerical scheme}
\label{numerical scheme}

In order to numerically solve the problem $(P)$, we use the {\it sample average approximation.}
%The measure $\mu$ is approximated using finite combinations of Dirac delta functions.
Specifically, at each iteration $k,$ a finite random set $\Omega_k=\{\omega_i^k\}_{i=1}^k$  
 of independent and identically $\mu$-distributed samplings is selected from the parameter space $\Omega$ and the following %a linear combination of $k$ Dirac delta functions centered at the points $\omega_i^k$, where $i = 1, \dots, k$.
a classical (finite-dimensional) optimal control problem is considered:
\begin{equation*}
(P_k) \ \ \parbox{.9\textwidth}{
\begin{align}
&\text{minimize} \quad  J_{k}[u(\cdot)] := \frac{1}{k}\sum_{i=1}^{k}g({x}(T, \omega_i^k),\omega_i^k), \nonumber \\
&\text{s.t.} \nonumber \\
& \begin{cases}
\dot{x}(t,\omega_{i}^k) = f_0({x}(t, \omega_{i}^k),\omega_{i}^k) + \displaystyle \sum_{j=1}^m f_j({x}(t, \omega_{i}^k),\omega_{i}^k)u_j(t) , \quad i=1,\dots,k, \nonumber  \\
x(0,\omega_{i}^k)=\varphi(\omega_{i}^k), \quad i=1,\dots,k. \nonumber 
\end{cases}
\end{align}}\nonumber
\end{equation*}
Note that the state dimension in $(P_k)$ is $kn,$ $\mathbb{R}^n$ being the state space of the original problem.
In the case of known initial condition $x_0$, one sets  $\varphi(\omega_i^k) = x_0$, for $i=1,\dots,k.$  %In this section, we assume that the set of admissible controls is defined by  
%\begin{small}  
%\begin{equation*}  
%\hat{\mathcal{U}}_{\rm ad} := \left\{ u \in  L^\infty(t_0,T;\mathbb{R}^m)\cap L^2(0,T;\mathbb{R}^m) : u(t) \in U(t) \quad \text{a.e. } t \in [0,T] \right\}. 
%\end{equation*}  
%\end{small}  
Note that, since $ {\mathcal{U}}_{\rm ad}\subset L^2(0,T;\mathbb{R}^m),$ which is a complete, separable metric space with respect to the $L^2$-topology. Consequently, one can apply the numerical scheme developed in \cite{Phelps2016}, where the authors employ an extension of the {\it strong law of large numbers} (SLLN) to {\it random
lower semicontinuous} functions. Recall that the SLLN is the theoretical backbone of the Monte Carlo method. This numerical scheme estimates the objective functional $J$ defined in problem $(P)$ using the sample mean $J_k$ defined in  $(P_k).$ The convergence properties of problems  $(P_k)$ are analyzed using the concept of {\it epi-convergence} of the objective functionals. This notion of convergence offers a natural framework for studying the approximation of optimization problems, as it facilitates the discussion of the convergence of both their minimizers and infima.

\subsection{Comments on convergence} 
To apply the approximation scheme developed in \cite{Phelps2016} we need to introduce an additional condition on function $g$: 
$$
\textbf{  (G)}\hspace{0.4cm} \parbox{.9\textwidth}{
For each $y \in \mathbb{R}^n,$ there exists an open neighborhood $N_0$ of $y$ in $\mathbb{R}^n$ and an integrable function $\alpha_0: \Omega \rightarrow(-\infty, \infty)$ such that, for almost all $\omega \in \Omega,$ the inequality $
g(x, \omega) \geq \alpha_0(\omega)
$
holds for all $x \in N_0$.
}
$$
\begin{definition}[Epi-convergence] Let $(\mathcal{U}, d)$ be a complete,  separable  metric space, and consider a sequence of lower semicontinuous functions $\{F_k\},$ with
$$
F_k: \mathcal{U}\rightarrow (-\infty, \infty].
$$
The sequence $ F_k[\cdot] $ is said to {\it epi-converge} to $ F_0[\cdot] $, if the following two conditions are satisfied for every $ u_0 \in \mathcal{U} $:
\begin{enumerate}
    \item[(1)] $\liminf_{k\to\infty} F_k\left[u_k\right] \geq F_0\left[u_0\right]$, whenever $ u_k \rightarrow u_0 $.
    \item[(2)] $\lim_{k\to\infty} F_k\left[v_k\right] = F_0\left[u_0\right]$, for at least one sequence $ v_k \rightarrow u_0 $.
\end{enumerate}
\end{definition}
\begin{remark}
    The Lipschitz continuity of the function $g(\cdot,\omega),$ for all $\omega\in\Omega$ and Lemma \ref{cont.traj}-{\it (iii)} imply that the mappings ${\mathcal{U}}_{\rm ad}\ni u\mapsto J_k[u]$ are Lipschitz continuous, for every $k$.
\end{remark} 

\begin{theorem}\cite{Artstein1995,Phelps2016}\label{num_conv}
    Let $\omega_1,\dots,\omega_k,  $ be a sequence of independent and identically $\mu$-distributed drawings from $\Omega.$ Under conditions \textbf{  (H0)}-(iii) and \textbf{  (G)}, as $k\rightarrow\infty,$ the sequence $J_k$  epi-converges a.e. (or almost surely) to J.
\end{theorem}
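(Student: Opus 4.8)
The plan is to reduce the statement to the general epi-convergence theorem for Monte Carlo approximation of integral functionals established in Artstein--Wets \cite{Artstein1995} (and applied in the optimal-control context in \cite{Phelps2016}), after verifying that its hypotheses hold in our setting. First I would recast the problem on the common domain $\mathcal U := \mathcal U_{\rm ad}[t_0,T]$, regarded as a complete separable metric space under the $L^2(t_0,T;\mathbb R^m)$ metric (a fact already noted in Section \ref{numerical scheme}). For each fixed $\omega\in\Omega$ define the random integrand $h(u,\omega) := g(x_u(T,\omega),\omega)$, where $x_u(\cdot,\omega)$ is the trajectory from \eqref{trajectory} with the relevant initial condition; then $J[u] = \int_\Omega h(u,\omega)\,d\mu(\omega)$ and $J_k[u] = \frac1k\sum_{i=1}^k h(u,\omega_i^k)$ are precisely the integral functional and its sample-average approximation.

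The key steps are then to check the three structural requirements of the Artstein--Wets framework. (1) \emph{Measurability / random lsc structure}: for each $u$ the map $\omega\mapsto h(u,\omega)$ is $\mathcal B_\Omega$-measurable, since $g$ is $\mathcal B^n\times\mathcal B_\Omega$-measurable by \textbf{(H0)}-(iii) and $\omega\mapsto x_u(T,\omega)$ is measurable (it is in fact continuous by Lemma \ref{cont.traj}-(ii), or measurable directly from the integral equation); and $u\mapsto h(u,\omega)$ is continuous, hence lower semicontinuous, because $u\mapsto x_u(\cdot,\omega)$ is continuous from $L^2(t_0,T;\mathbb R^m)$ to $W^{1,1}$ by Lemma \ref{cont.traj}-(iii) and $g(\cdot,\omega)$ is Lipschitz by \textbf{(H0)}-(iii) — so $h$ is a random lower semicontinuous (indeed Carath\'eodory) function. (2) \emph{Integrability / minorization}: condition \textbf{(G)} supplies, for each $y\in\mathbb R^n$, a neighborhood $N_0$ and an integrable $\alpha_0$ with $g(\cdot,\omega)\ge\alpha_0(\omega)$ on $N_0$; combined with the uniform bound $|x_u(T,\omega)|\le C_x$ from Lemma \ref{cont.traj}-(i) and a finite cover of the compact set $\overline{B_{C_x}(0)}$, this yields a single integrable lower bound for $h(u,\cdot)$ valid for all admissible $u$, which is exactly the domination hypothesis needed to pass limits in the strong law for lsc integrands. (3) \emph{Tightness of the epigraphical data}: the $\mu$-integrability of $\sup$-type envelopes follows again from the uniform trajectory bound and the boundedness $|g|\le M$ in \textbf{(H0)}-(iii), so the integral functional $J$ is finite and well-defined on all of $\mathcal U$. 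With these three facts in hand, the Artstein--Wets strong law of large numbers for epi-convergence (the statement invoked as \cite{Artstein1995,Phelps2016}) applies verbatim and gives that, $\mu^{\mathbb N}$-almost surely, $J_k$ epi-converges to $J$ on $(\mathcal U, d_{L^2})$, which is the assertion.

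The main obstacle I anticipate is step (2): producing a \emph{single} integrable minorant $\alpha$ with $h(u,\omega)\ge\alpha(\omega)$ simultaneously for all $u\in\mathcal U$ and a.e. $\omega$. Condition \textbf{(G)} is only local in the state variable $y$, so one must exploit that all terminal states $x_u(T,\omega)$ lie in the fixed compact ball $\overline{B_{C_x}(0)}$ (Lemma \ref{cont.traj}-(i), with $C_x$ depending only on the data and the compact control set $\mathbf U$, not on the particular $u$), extract a finite subcover $\{N_{y_1},\dots,N_{y_\ell}\}$ with associated integrable minorants $\alpha_{0,1},\dots,\alpha_{0,\ell}$, and set $\alpha := \min_{1\le j\le \ell}\alpha_{0,j}$, which is integrable as a finite minimum of integrable functions. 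A secondary, purely bookkeeping point is to confirm that the $L^2$-metric on $\mathcal U$ (rather than the $L^\infty$-metric) is the correct topology for the lower-semicontinuity in (1); this is why Lemma \ref{cont.traj}-(iii) explicitly records continuity of $u\mapsto x_u(\cdot,\omega)$ from $L^2(t_0,T;\mathbb R^m)$, and no further work is needed. Once \textbf{(G)} and Lemma \ref{cont.traj} are in place, the remainder is a direct citation of \cite{Artstein1995,Phelps2016}.
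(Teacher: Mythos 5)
Your proposal is correct and follows essentially the same route as the paper, which states this result as a direct citation of the Artstein--Wets strong law for epi-convergence as applied in the Phelps--Royset--Gong framework, with the surrounding text verifying exactly the ingredients you list (completeness and separability of $\mathcal U_{\rm ad}$ in the $L^2$-metric, continuity of $u\mapsto J_k[u]$ via Lemma \ref{cont.traj}-(iii) and the Lipschitz bound on $g$, and condition \textbf{(G)} for the integrable minorant). The only remark worth adding is that your compactness argument in step (2) is not really needed here, since \textbf{(H0)}-(iii) already gives the uniform bound $|g|\leq M$, so $\alpha\equiv -M$ serves as a global integrable minorant.
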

The following result establishes a valuable convergence property for the approximate minimizers of problems  $(P_k),$ when the sequence of approximating objective functionals $J_k$ satisfies the condition of epiconvergence.

\begin{proposition}\cite{Attouch, Phelps2016}\label{inf_conv}
     Consider the sequence of Lipschitz continuous functions $J_k: {\mathcal{U}}_{\rm ad} \rightarrow \mathbb{R}$ such that $J_k$ epiconverges to $J$. If $\left\{u_k\right\}_{k \in \mathbb{N}} \subset {\mathcal{U}}_{\rm ad}$ is a sequence of global minimizers of $J_k$, and $\hat{u}$ is any accumulation point of this sequence (along a subsequence $\{u_{k_i}\}_{i\in \mathbb{N}}$ ), then $\hat{u}$ is a global minimizer of $J$ and $$\lim _{i\rightarrow \infty} \inf _{u \in {\mathcal{U}}_{\rm ad}} J_{k_i}\left[u\right]= \inf _{u \in {\mathcal{U}}_{\rm ad}} J[u].$$
\end{proposition}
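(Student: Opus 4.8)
The plan is to argue directly from the two defining conditions of epi-convergence, using no compactness beyond the assumed existence of the accumulation point $\hat u$. I set $m_\infty := \inf_{u\in\mathcal{U}_{\rm ad}} J[u]$ and, for each $k$, $m_k := \inf_{u\in\mathcal{U}_{\rm ad}} J_k[u] = J_k[u_k]$, the last equality holding because $u_k$ is a global minimizer of $J_k$. Note that all these quantities are finite: by \textbf{(H0)}-(iii) the map $g$ is bounded, so $J$ and each $J_k$ take values in $\mathbb{R}$, which is what makes the chained inequalities below legitimate.

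First I would establish the lower bound $\liminf_{i\to\infty} m_{k_i} \ge J[\hat u]$. Since $u_{k_i}\to\hat u$, this is essentially condition (1) of epi-convergence; as (1) is phrased for full sequences, I would complete $\{u_{k_i}\}_i$ to a sequence indexed by all $k$ that still converges to $\hat u$ (for instance by setting the missing terms equal to $\hat u$), apply (1) to it, and then use that the $\liminf$ along a subsequence dominates the $\liminf$ along the whole sequence.

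Next, to show that $\hat u$ is a global minimizer of $J$, I would fix an arbitrary $v\in\mathcal{U}_{\rm ad}$ and, by condition (2), take a recovery sequence $v_k\to v$ with $J_k[v_k]\to J[v]$. Combining the minimality inequality $J_{k_i}[u_{k_i}]\le J_{k_i}[v_{k_i}]$ with the lower bound of the previous step gives
$$J[\hat u]\ \le\ \liminf_{i\to\infty} J_{k_i}[u_{k_i}]\ \le\ \lim_{i\to\infty} J_{k_i}[v_{k_i}]\ =\ J[v].$$
Since $v$ is arbitrary, $\hat u$ minimizes $J$ and $J[\hat u]=m_\infty$. For the convergence of the infima, the first step already yields $\liminf_{i} m_{k_i}\ge J[\hat u]=m_\infty$; for the reverse inequality I would pick a recovery sequence $w_k\to\hat u$ from condition (2), so that $m_{k_i}=J_{k_i}[u_{k_i}]\le J_{k_i}[w_{k_i}]\to J[\hat u]=m_\infty$, whence $\limsup_{i} m_{k_i}\le m_\infty$. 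Together these give $\lim_{i} m_{k_i}=m_\infty$, the claimed identity.

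This is the classical ``fundamental theorem'' of epi-/$\Gamma$-convergence, so there is no genuine obstacle; the only point requiring a little care is the interplay between the subsequence $\{k_i\}$ and condition (1), which is handled by the completion trick above. I would also remark that Lipschitz continuity of the $J_k$ plays no role in this argument: lower semicontinuity, implicit in the epi-convergence framework, is all that is used, and it enters only through condition (1).
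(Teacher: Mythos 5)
Your proposal is correct: the paper does not prove this proposition but cites it from \cite{Attouch,Phelps2016}, and your argument is exactly the standard ``fundamental theorem'' of epi-convergence used there — liminf inequality along the subsequence (your completion trick, filling the missing indices with $\hat u$, is a legitimate way to reconcile condition (1) with the subsequence), recovery sequences for arbitrary competitors to get global minimality, and a recovery sequence at $\hat u$ itself for the convergence of infima. Your side remarks are also accurate: finiteness follows from the boundedness of $g$ in \textbf{(H0)}-(iii), and only lower semicontinuity (implicit in epi-convergence), not the Lipschitz continuity of the $J_k$, is actually used.
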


\subsection{Example 1: Optimizing fishing strategies }

We now use a toy model to illustrate the results of Theorem \ref{num_conv} and Proposition \ref{inf_conv}.
We consider a modification of the optimal fishing problem presented in \cite{SoledadAronna2013}. We let $x(t,\omega)$ represent the size of the halibut fish population at time $t$, associated with $\omega$. 
The control $u(t)\in \mathbb R$ is the fishing effort per unit of time and the objective is to maximize the average fishing revenue over a fixed time interval $[0,T]$.
We obtain the following model:

\begin{equation}\label{fishing_original}
\begin{aligned}
&\displaystyle \text{maximize } J[u(\cdot)]=\int_\Omega \int_0^T \bigg(E-c/x(t,\omega) \bigg) u(t) U_{\text{max}} \,dt \, d\mu(\omega) \\
&\text{s.t}
\\
  &  \begin{cases}
       \dot x(t,\omega) = r(\omega) x(t,\omega)\left(1 - \frac{x(t,\omega)}{k(\omega)}\right) - u(t) U_{\text{max}}\quad \text{ a.e. on }[0,T],\\
        0\leq u(t) \leq 1\quad \text{ a.e. on }[0,T],\\
        x(0,\omega ) = \varphi(\omega).
    \end{cases}
    \end{aligned}
\end{equation}
with fixed parameters $T = 10$ for the final time, $E = 1$ for the selling cost, $c=17.5$ is the coefficient associated to the fishing cost, $U_{\max} = 20$ establishes the upper limit for the fishing effort, and we consider the following stochastic parameters: $\varphi \sim \mathcal{TN}(70,5,40,90)$ for the uncertain initial condition, $r \sim \mathcal{TN} (0.71, 0.05, 0.1, 1)$ the reproduction rate and $k \sim \mathcal{TN} (80.5, 10, 65, 95)$ the carrying capacity, where $\mathcal{TN}(\mu, \sigma, a, b)$ denotes the {\it truncated normal,} with mean $\mu$, standard deviation $\sigma$ in the interval $[a,b],$ as given in {\it e.g.} \cite{Kroese2011}. 
Recall that, since the probability density function of the truncated normal has an integrable square, it can be associated with an $L^2$-function.

After introducing an additional state variable $z,$ we transform \eqref{fishing_original} into the Mayer problem:
\begin{equation}
    \label{fishing_mayer}
   \begin{aligned}
    &\displaystyle \text{maximize   } J[u]=\int_\Omega  z(T,\omega) \, d\mu(\omega)\\
    &\text{s.t}\\
   &\begin{cases}
       \dot x(t,\omega) =
       rx(t,\omega)\left(1 - \frac{x(t,\omega)}{k}\right) - u(t) U_{\text{max}},\quad
       \text{ a.e. on }[0,T], \\
        \dot z(t,\omega)=\left(E-c/x(t,\omega) \right) u(t) U_{\text{max}}\quad
       \text{ a.e. on }[0,T], \\
        0\leq u(t) \leq 1\quad \text{ a.e. on }[0,T], \\
        x(0,\omega) = \varphi(\omega), \\
        z(0,\omega)=0.
    \end{cases}
    \end{aligned}
\end{equation}
Following Theorem \ref{Thmfeedbackinfinite}, the singular arc, if it exists, is given by, omitting arguments,
{\small \begin{equation*}
\begin{aligned}
u(t) = \displaystyle  -\frac{\displaystyle  \int_\Omega r^2p_z\left(\frac{1}{k} -\frac{1}{x}\right)+p_x\left( \frac{2x}{k}\left(1-\frac{x}{k}\right)-1\right) d\mu(\omega)}{2 U_{\max} \displaystyle \int_\Omega \frac{r}{k} \left(p_x + \frac{c}{x} p_z \right) d\mu(\omega)},
\end{aligned}
\end{equation*}}
where $p=(p_x,p_z)^T$ is the adjoint state.
We approximate the solution of \eqref{fishing_mayer} by solving the discretized version $(P_k)$ given in Subsection \ref{numerical scheme}, for sufficiently large values of $k$.
In order to solve  $(P_k)$ we use the GEKKO Library \cite{beal2018gekko}. For $k = 20$, we obtain %errors \sout{given in \SA{Figures  \ref{fig:control_fishing}, \ref{fig:fishing arc traj conv}, \ref{fig:fishing}, and Table \ref{tab: fishing selected costs}} with \SA{approximation}} 
$$
\displaystyle\frac{|J^{19} - J^{20}|}{|J^{19}|}=8.3\times 10^{-4},\qquad 
\frac{\|u^{19} - u^{20}\| }{\|u^{19}\|}=8.4\times10^{-4}.
$$
Figure \ref{fig:control_fishing} exhibits the control in two ways: the gray dotted function is the control approximated by using the sequence of problems (P$_k$) and the red one is the control computed by formula \eqref{control_formula}.
Figures \ref{fig:fishing trajectories} and \ref{fig:fishing control arcs} illustrate the values of $x^k(t)$ and $u^k(t),$ for each $k =1,\dots,20,$ respectively and Table \ref{tab: fishing selected costs} display the values of the cost function for each iteration. Figure \ref{fig:fishing} shows the relative rate of convergence of the cost function and the control.%\SA{XXX: fazer ref à Figura 3}

\begin{figure}
\begin{center}
\includegraphics[width=.5\linewidth]{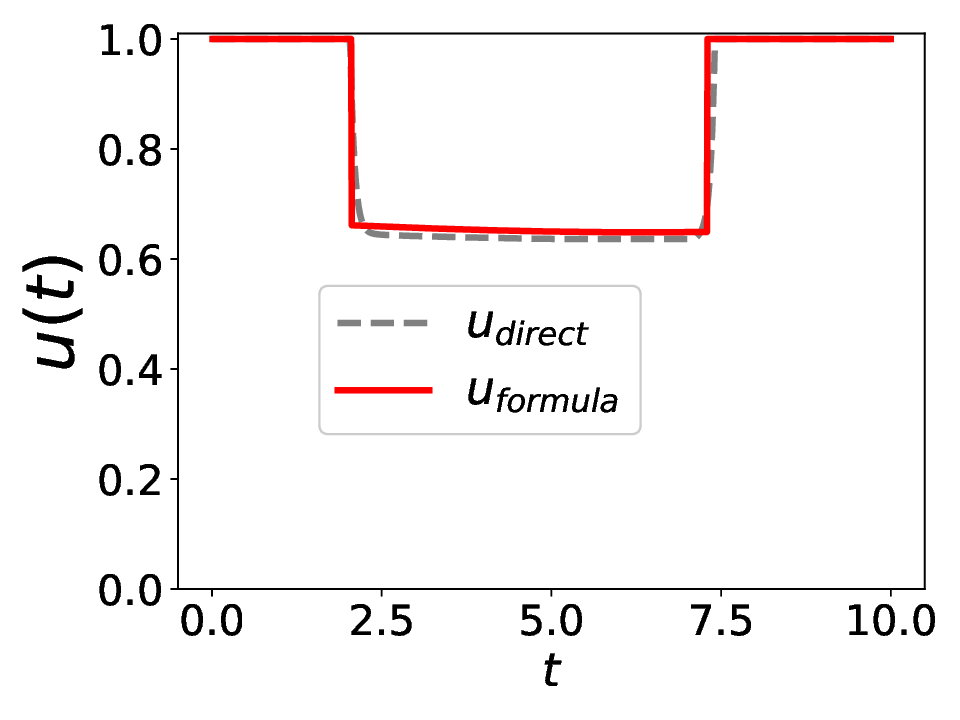}
\caption{In red, optimal control calculated with the formula derived in Theorem \ref{sing_control_scalar}. In dashed grey, the optimal control is calculated with the GEKKO library. }\label{fig:control_fishing}
\end{center}
\end{figure}
%\begin{figure}
%\begin{center}
%\includegraphics[width=.6\linewidth]{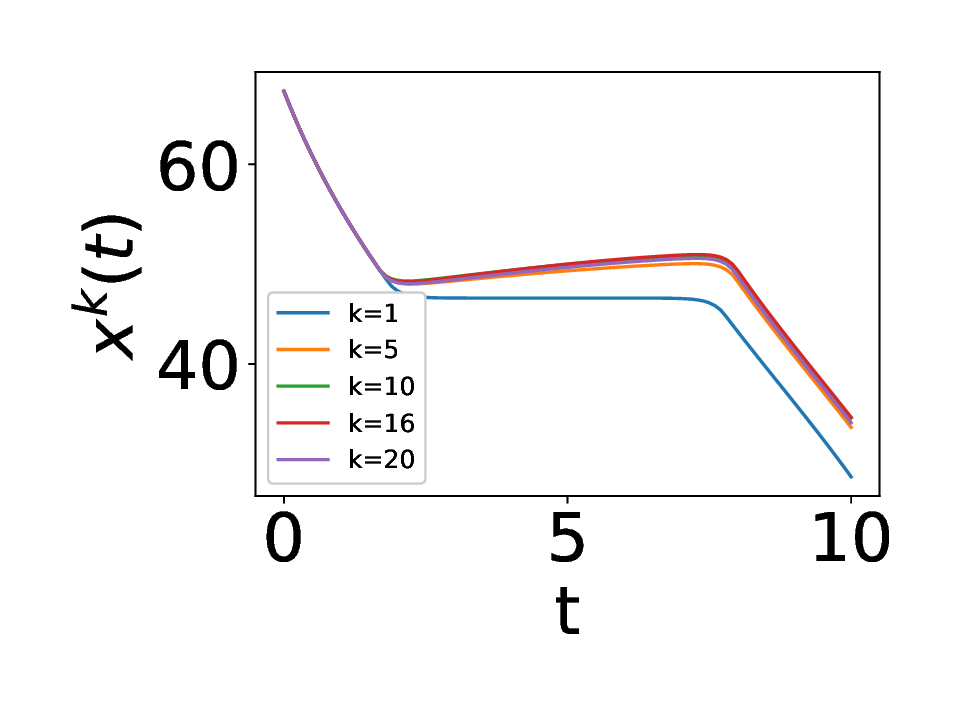}
%\caption{Each blue line represents a trajectory $x_i(t)$ for each %iteration $i$.
%}\label{fig:fishing trajectories}
%\end{center}
%\end{figure}
%\begin{figure}[htbp]
%    \centering
%%    \begin{subfigure}[b]{0.49\textwidth} % Adjust width as needed
%        \includegraphics[width=\textwidth]{fishing_x_arcs.eps}
%        \caption{Trajectories for each time $t$ and selected iterations.}
%        \label{fig:fishing trajectories}
%    \end{subfigure}
    
    %\vspace{1em} % Optional vertical space between figures
%    \begin{subfigure}[b]{0.49\textwidth}
%        \includegraphics[.49\textwidth]{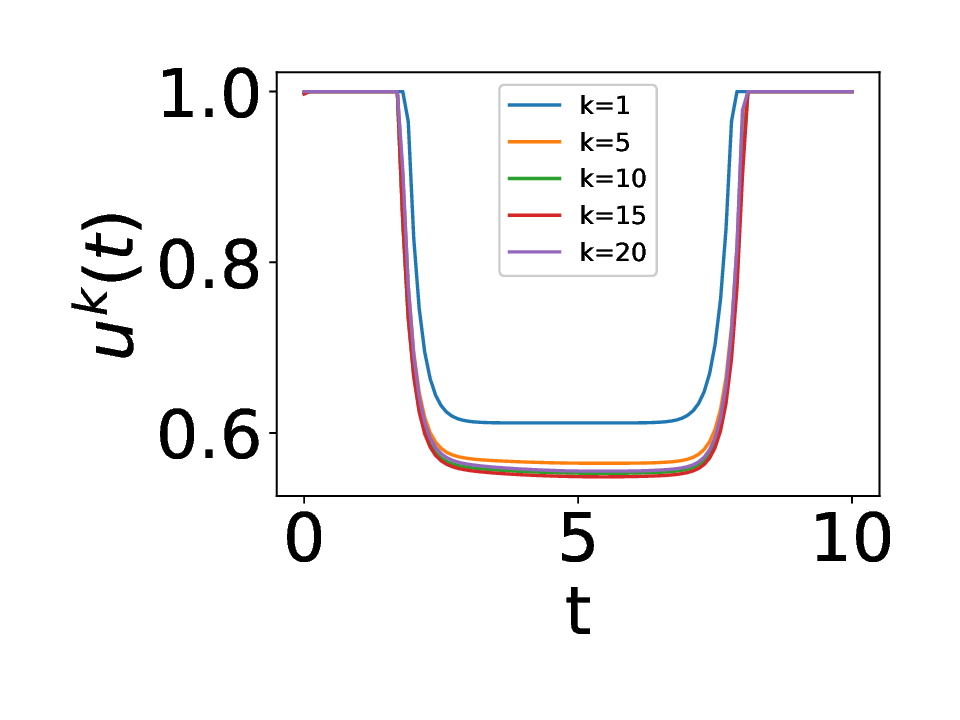}
%%        \caption{Control arcs for each time $t$ and selected iterations.}
%        \label{fig:fishing control arcs}
%    \end{subfigure}
%%    \caption{Values for each time $t$ and selected iterations $i$.}
%    \label{fig:fishing arc traj conv}
%\end{figure}
\begin{figure}[H]
    \centering
    \begin{subfigure}[b]{0.49\textwidth}
\includegraphics[width=\textwidth]{fishing_x_arcs.eps}
        \caption{Trajectories for each time $t$ and selected iterations.}
        \label{fig:fishing trajectories}
    \end{subfigure}
    \hfill
    \begin{subfigure}[b]{0.49\textwidth}
    \includegraphics[width=\linewidth]{fishing_controls_arcs.eps}
        \caption{Control arc for each time $t$ and selected iterations.}
        \label{fig:fishing control arcs}
    \end{subfigure}
    \caption{Values for each time $t$ and selected iterations $k$.}
    \label{fig:fishing arc traj conv}
\end{figure}

\begin{table}[h!]
\centering
\begin{tabular}{|c|c|}
\hline
\textbf{Iteration} & \textbf{Final cost} \\ \hline
1 & 95.451 \\ \hline
5 & 93.053 \\ \hline
10 & 93.351 \\ \hline
15 & 93.523 \\ \hline
20 & 95.195 \\ \hline
\end{tabular}
\centering
\caption{Numerical values for the final cost in the given iteration.}
\label{tab: fishing selected costs}
\end{table}

\begin{figure}[H]
    \centering
    \begin{subfigure}[b]{0.45\textwidth}    \includegraphics[width=\textwidth]{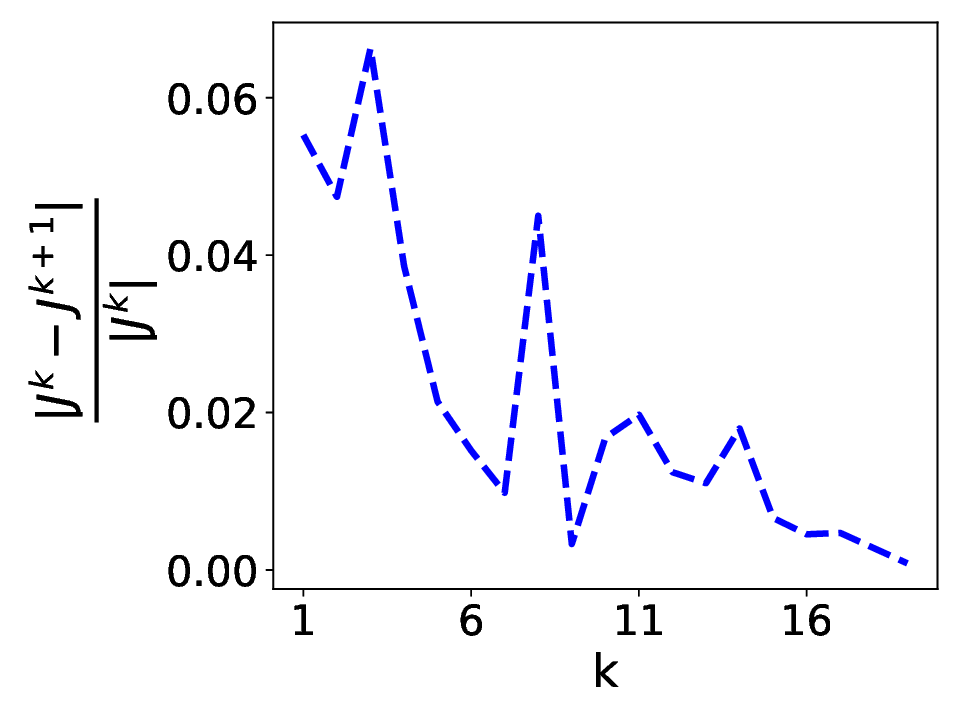}
        \caption{Cost relative distance between successive iterations}
        \label{fig:fishing cost}
    \end{subfigure}
    \hfill
    \begin{subfigure}[b]{0.45\textwidth}
        \includegraphics[width=\linewidth]{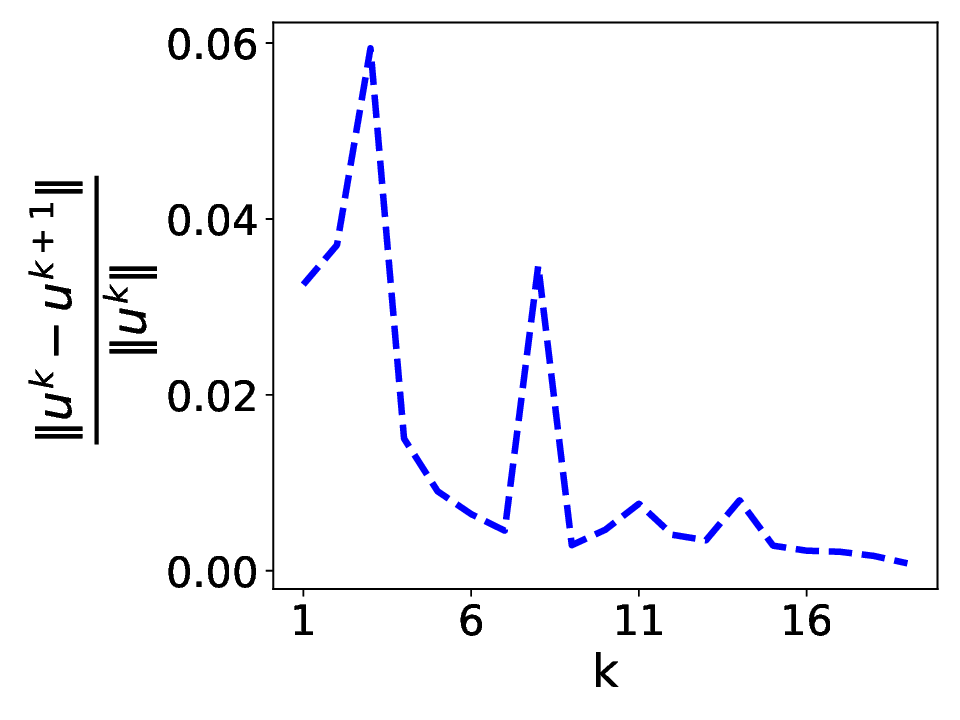}
        \caption{Control relative distance between successive iterations.}
        \label{fig:control fishing}
    \end{subfigure}
    \caption{Relative distances for cost and control.}
    \label{fig:fishing}
\end{figure}

\subsection{Example 2: A problem with vector control and commutative vector fields}

Consider the following variation of the problem \cite[page 248]{brysonHo}:
\if{
\begin{equation}
\begin{aligned}
&\displaystyle \text{minimize } J[u_1, u_2]= \int_\Omega \int_0^2  \frac{1}{2} x^2(t,\omega)d\mu(\omega)
\\
&\text{s.t}
\\
&\begin{cases}
    \begin{pmatrix}
        \dot x(t,\omega)\\
        \dot y(t,\omega)
    \end{pmatrix} =
    \begin{pmatrix}
        y(t,\omega)^2\\
        0
    \end{pmatrix}
    + u_1(t)    \begin{pmatrix}
        1\\
        0
    \end{pmatrix}
    + u_2(t)    \begin{pmatrix}
        0\\
        -1
    \end{pmatrix}  \text{ a.e } t\in [0,2]\\ \\
-1 \leq u_1(t),u_2(t) \leq 1 \text{ a.e on } [0,2]\\ \\
x(0,\omega) = y(0,\omega) \sim \text{Uniform}(0.95, 1.05)
\end{cases}
\end{aligned}
\end{equation}

As in the previous example, we introduce a new state $z$ to transform the problem into a Mayer one, so the new system can be written as}\fi
\begin{equation}
\label{varBH mayer}
\begin{aligned}
&\displaystyle \text{minimize } J[u_1, u_2]=  \int_\Omega z(2,\omega)  d\mu(\omega)\\
&\text{s.t}
\\
&\begin{cases}
\displaystyle 
    \begin{pmatrix}
        \dot x(t,\omega)\\
        \dot y(t,\omega)\\
        \dot z(t,\omega) 
    \end{pmatrix} =
    \begin{pmatrix}
        y^2(t,\omega)\\
        0\\
        \frac{1}{2} x^2(t,\omega)
    \end{pmatrix}
    + u_1(t)    \begin{pmatrix}
        1\\
        0\\
        0
    \end{pmatrix}
    + u_2(t)    \begin{pmatrix}
        0\\
        -1\\
        0
    \end{pmatrix},\quad \text{a.e. } t\in [0,2],
    \end{cases}
    \end{aligned}
\end{equation}
with $|u_1(t)|, |u_2(t)| \leq 1 \text{ a.e. } t \in [0,2]$ and $x(0,\omega) = y(0,\omega) \sim U(0.95, 1.05)$ the uniform distribution in $[0.95,1.05],$ which again correspond to $L^2$-functions, since their probability density functions are square-integrable.
The involved vector fields are given by
$$
f_1 := \begin{pmatrix}1\\0\\0\end{pmatrix} \ \text{and} \ f_2 :=\begin{pmatrix}0\\-1\\1\end{pmatrix}
$$
which are commutative (this is, $[f_1,f_2]$ identically vanishes) and, noting that, $[f_2,[f_0,f_1]]=[f_1,[f_0,f_2]]=0$, the singular controls $u^*_1(t)$ and $u^*_2(t)$ given by Theorem \ref{Thmfeedbackinfinite} read:

$$
u^*_1(t) =- \frac{1}{2}\frac{\displaystyle \int_\Omega p_z(t,\omega) y^2(t,\omega) d\mu(\omega)}{\displaystyle \int_\Omega p_z(t,\omega) d\mu(\omega)}
,\quad
u^*_2(t) =- \frac{\displaystyle \int_\Omega p_x(t,\omega) y^2(t,\omega) d\mu(\omega)}{\displaystyle  \int_\Omega p_x(t,\omega) d\mu(\omega)}
$$
where $p=(p_x,p_y,p_z)^\top$
is the adjoint state of the system. 

Once again we approximate the system under investigation \eqref{varBH mayer} using the discretization scheme  $(P_k)$ for sufficiently large values of $k$. For $k = 19$, we obtained %results given in Figure \ref{fig:bh}, and errors 
\begin{gather*}
\frac{|J^{18} - J^{19}|}{|J^{18}|}=5.9\times10^{-4},\quad
\frac{\|u^{18}_1 - u^{19}_1\|}{\|u^{18}_1\|}=1.8\times10^{-2},\quad 
\frac{\| u^{18}_2 - u^{19}_2 \|}{\|u^{18}_2\|}=3.5\times10^{-2}.
\end{gather*}
Figures \ref{fig:bh control1} and \ref{fig:bh control2} exhibit the controls $u_1$ and $u_2$ as follows: the gray dotted function is the control approximated using the sequence of problems $(P_k)$ and the red one is the function computed via \eqref{mult dim vector}. The trajectories $x^k$ and $y^k$ in Figures \ref{fig:bh x arcs} and \ref{fig:bh y arcs}, respectively, are presented for each $t\in[0,2]$ and for selected iterations. In Table \ref{tab: bh selected costs}, values for the cost are presented for selected iterations. Figure \ref{fig:bh} exhibit the relative convergence rate for the cost functions and the controls.

\begin{figure}[H]
    \centering
    \begin{subfigure}{0.48\textwidth} % Adjust the width as needed
        \centering
        \includegraphics[width=\linewidth]{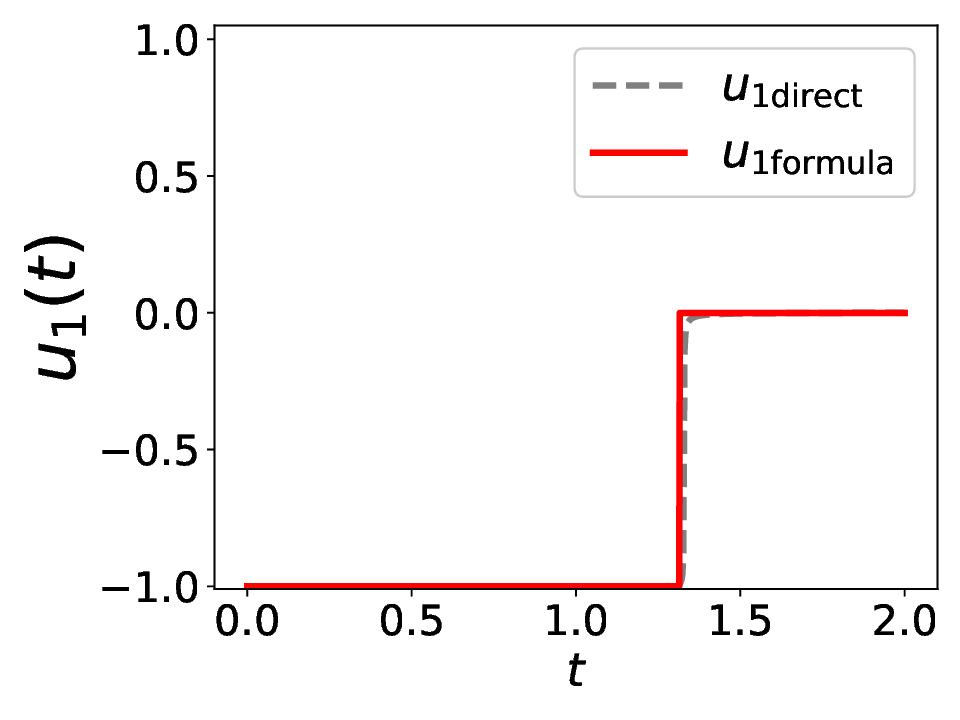}
        \caption{}
        \label{fig:bh control1}
    \end{subfigure}
    \hfill
    \begin{subfigure}{0.48\textwidth} % Adjust the width as needed
        \centering
        \includegraphics[width=\linewidth]{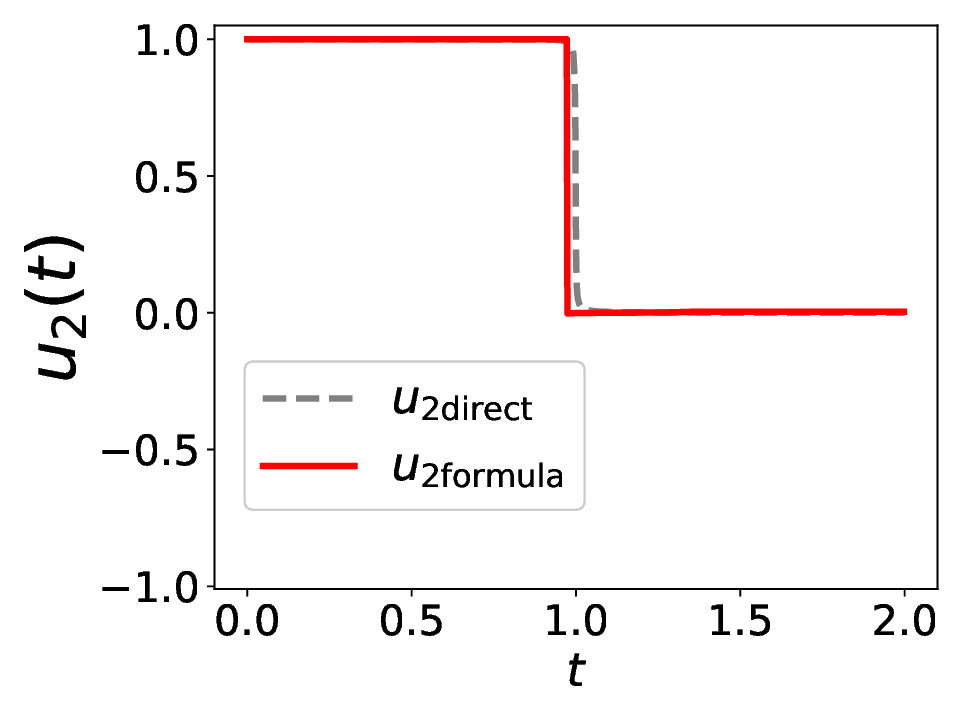}
        \caption{}
        \label{fig:bh control2}
    \end{subfigure}
    \caption{\small The dashed gray lines show the controls calculated with the GEKKO library, while the red lines show the controls calculated using \eqref{mult dim vector}.}
    \label{fig:bh controls comp}
\end{figure}

\begin{figure}[htbp]
    \centering
    \begin{subfigure}[b]{0.49\textwidth}
        \includegraphics[width=\linewidth]{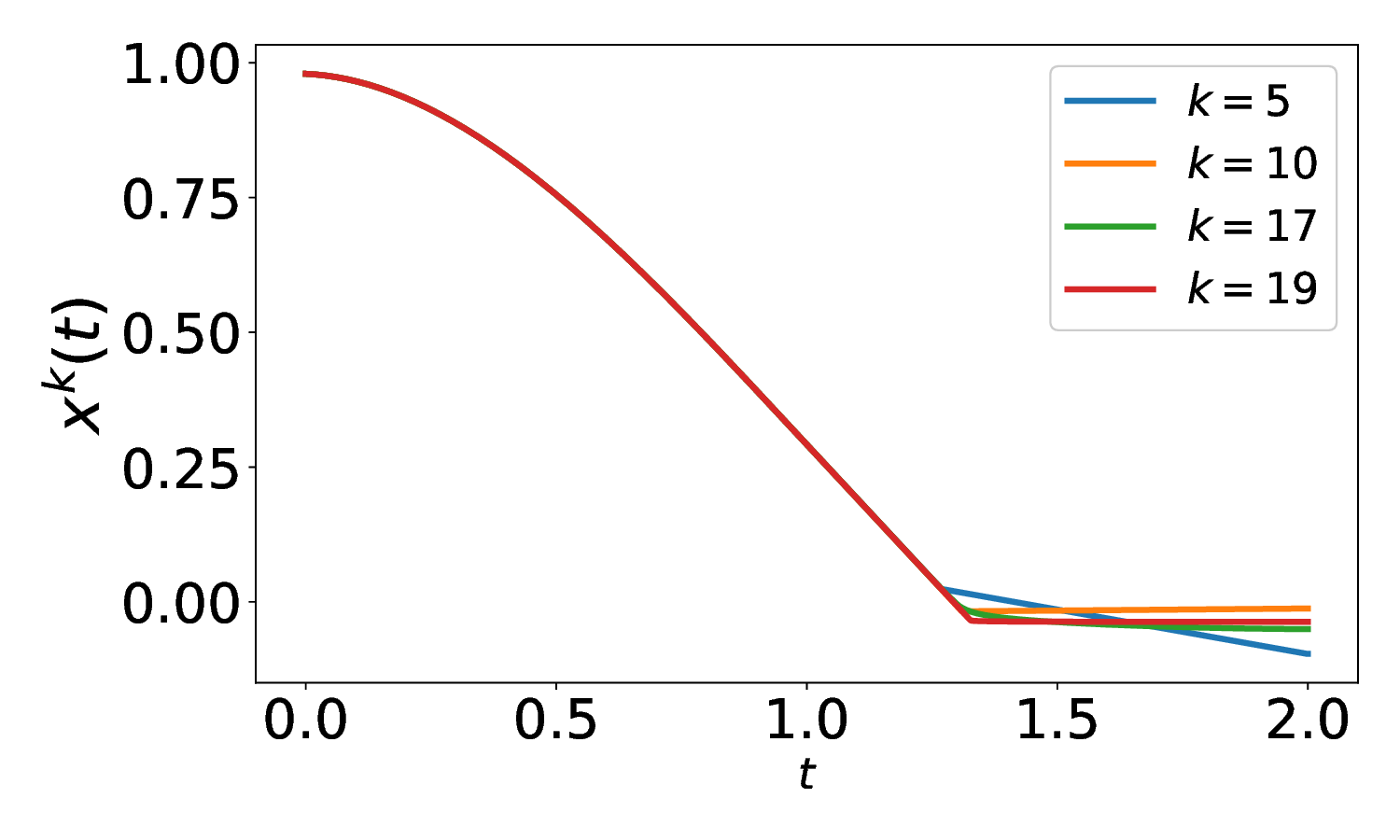}
        \caption{The arc $x^k(t)$ for each $t$ and selected iterations.}
        \label{fig:bh x arcs}
    \end{subfigure}
    \hfill
    \begin{subfigure}[b]{0.49\textwidth}
    \includegraphics[width=\linewidth]{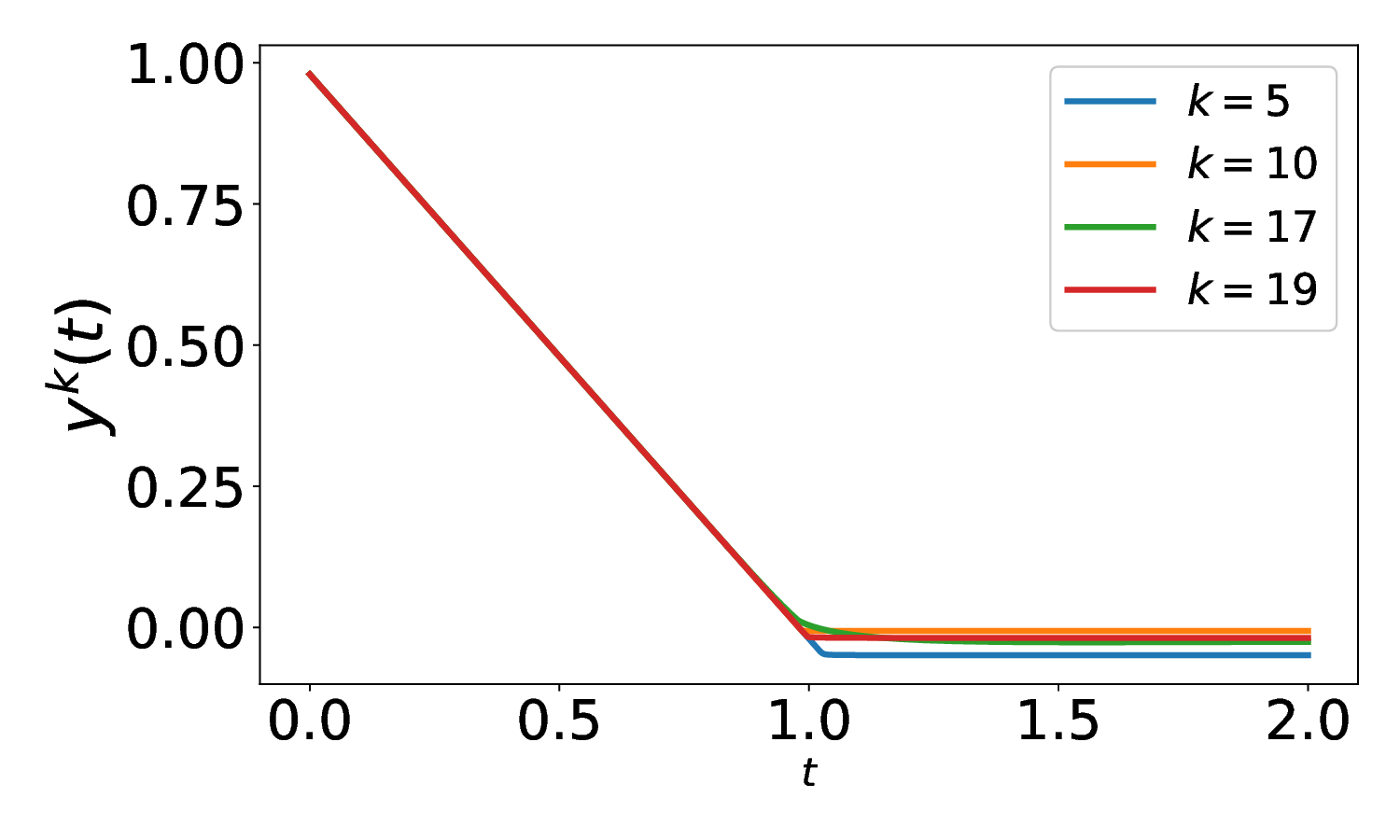}
         \caption{The arc $y^k(t)$ for each $t$ and selected iterations.}
        \label{fig:bh y arcs}
    \end{subfigure}
        \caption{Trajectories arcs for each state in each time $t$ and selected iterations k.}
    \label{fig:bh arcs}
\end{figure}

\begin{figure}[htbp]
    \centering
    \begin{subfigure}[b]{0.45\textwidth}
        \includegraphics[width=\linewidth]{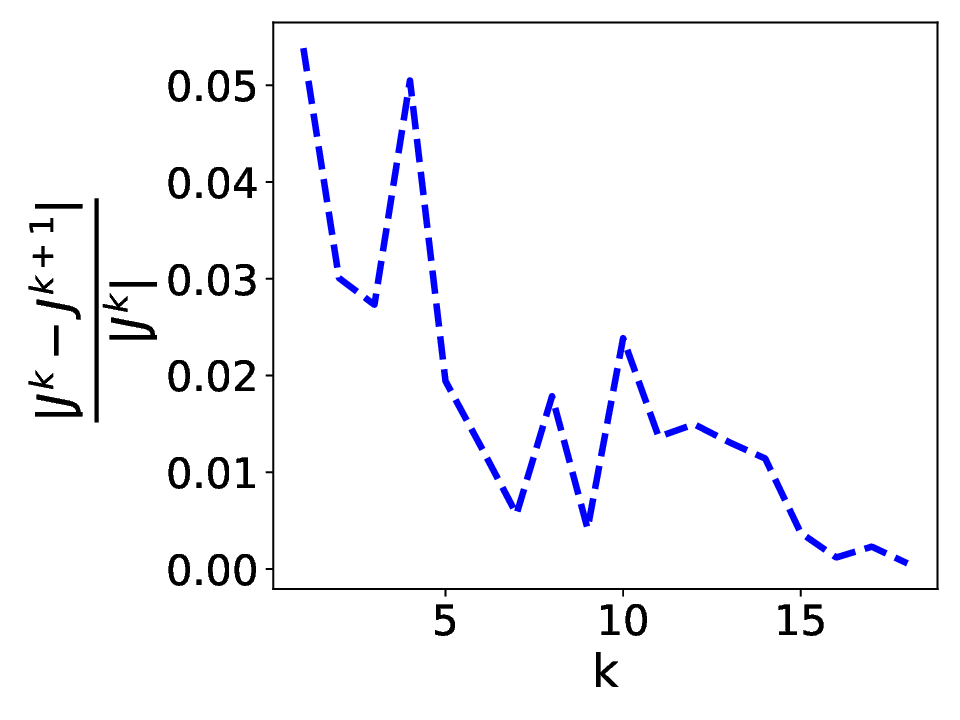}
        \caption{Cost relative distance between successive iteration.}
        \label{fig:bh cost}
    \end{subfigure}
    \hfill
    \begin{subfigure}[b]{0.45\textwidth}
        \includegraphics[width=\linewidth]{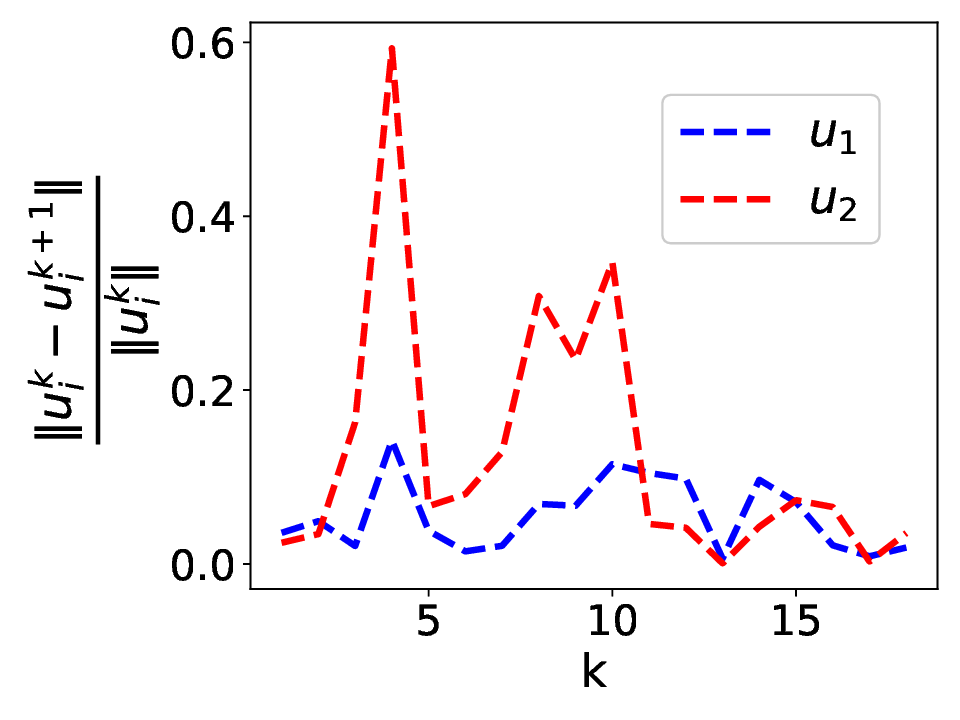}
        \caption{Control relative distance between successive iteration.}
        \label{fig:bh controls}
    \end{subfigure}
    \caption{Relative distances for cost and controls.}
    \label{fig:bh}
\end{figure}

\begin{table}[ht]
\centering
\begin{tabular}{|c|c|}
\hline
\textbf{Iteration} & \textbf{Final cost} \\ \hline
1 & 0.274 \\ \hline
5 & 0.286\\ \hline
10 & 0.290 \\ \hline
%15 & 0.299 \\ \hline
19 & 0.299 \\ \hline
\end{tabular}
\caption{Numerical values for the final cost in the given iteration.}
\label{tab: bh selected costs}
\end{table}
%\newpage

\section{Conclusions}

In this paper, we characterized the optimal control for the Riemann-Stieltjes optimal control problem by applying Pontryagin’s Maximum Principle to cases with both known and unknown initial conditions. For the former, we leveraged existing results from \cite{Bettiol2019}, while for the latter, we adopted the approach from \cite{Frankowska1992}, formulating the problem in an infinite-dimensional space where the initial condition is treated as an element of a Hilbert space. This framework enabled a rigorous analysis of control structures under uncertainty.  

Our numerical experiments confirmed the effectiveness of the proposed methodology. The computed control trajectories closely matched those obtained using Python’s GEKKO optimization package, providing validation of the theoretical findings. Additionally, the observed convergence of cost-relative and control-relative distances toward zero demonstrated the stability and reliability of the approach.  

These results highlight the robustness of our framework and its potential applicability to a broad range of uncertain optimal control problems. Future work could explore extensions to more general system dynamics, as well as the development of alternative numerical solution techniques to further enhance computational efficiency and applicability.

%\appendix
\begin{appendices}
\section{ Some technical lemmas }\label{AppA}

\subsection{Proof of the Lemma \ref{cont.traj}}

 {\it (i)} For all $u \in \mathcal{U}_{ad}[t_0,T]$ and $\omega\in\Omega,$  the corresponding trajectory satisfies the equation \eqref{trajectory}, then for all $t\in [t_0,T]$
$$
\big|x_{u}(t,\omega)\big| \leq \big| x_0\big| + \int_{t_0}^t \big|f_0( x_{u}(\sigma,\omega),\omega) + \sum_{i=1}^m f_i(x_{u}(\sigma,\omega),\omega)u_i(\sigma)\big| d\sigma,
$$
which, in view of \textbf{(H0)}, results in
\begin{equation}
\begin{aligned}
        \big|x_{u}(t,\omega)\big| &\leq \big| x_0\big| + \left(c_0+\|u\|_{L^{\infty}}\sum_{i=1}^mc_i\right)\int_{t_0}^t \big(1+ |x_{u}(\sigma,\omega)|\big) d\sigma,
        \\
        &= \big| x_0\big|+ \left(c_0+\|u\|_{L^{\infty}}\sum_{i=1}^mc_i\right)(t-t_0)
        \\
& \hspace{3cm}+\left(c_0+\|u\|_{L^{\infty}}\sum_{i=1}^mc_i\right)\int_{t_0}^t |x_{u}(\sigma,\omega)| d\sigma, 
\end{aligned}
\end{equation}
so by Grönwall's inequality, we get 
$$
\big|x_{u}(t,\omega)\big|\leq \left[\big| x_0\big|+ \left(c_0+\|u\|_{L^{\infty}}\sum_{i=1}^mc_i\right)(t-t_0)\right]e^{(t-t_0)\left(c_0+\|u\|_{L^{\infty}}\sum_{i=1}^mc_i\right)} 
$$
and {\it (i)} follows.

 {\it(ii)} Fix a control $u \in \mathcal{U}_{ad}[t_0,T].$      From \eqref{trajectory} we have that for all $\omega_{1},\omega_{2}\in \Omega$ and all $t\in [t_0,T],$
     \begin{equation*}
         \begin{aligned}
             &|x_u(t,\omega_{1})-x_u(t,\omega_{2})|\leq 
             \\ 
&\,\,\,\,\,\int_{t_0}^{t}\bigg(|f_0(x_u(\sigma,\omega_{1}),\omega_{1})-f_0(x_u(\sigma,\omega_{2}),\omega_{2})| \\
             &\hspace{3.2cm} +\sum_{i=1}^m|f_i(x_u(\sigma,\omega_{1}),\omega_{1})-f_i(x_u(\sigma,\omega_{2}),\omega_{2})|\,|u_i(\sigma)|\bigg)\,d\sigma \\
              &=\int_{t_0}^{t}\bigg(|f_0(x_u(\sigma,\omega_{1}),\omega_{1})-f_0(x_u(\sigma,\omega_{1}),\omega_{2})| \\
              &\hspace{4.5cm}+|f_0(x_u(\sigma,\omega_{1}),\omega_{2})-f_0(x_u(\sigma,\omega_{2}),\omega_{2})|\\
&+\sum_{i=1}^m\left[|f_i(x_u(\sigma,\omega_{1}),\omega_{1})-f_i(x_u(\sigma,\omega_{1}),\omega_{2})|\,|u_i(\sigma)|\right.
             \\
&\hspace{3.2cm}\left.+|f_i(x_u(\sigma,\omega_{1}),\omega_{2})-f_i(x_u(\sigma,\omega_{2}),\omega_{2})|\,|u_i(\sigma)|\right]\bigg)\,d\sigma,
         \end{aligned}
     \end{equation*} 
  conditions \textbf{  (H0)} and \eqref{f.regular} lead us to 
\begin{equation*}
         \begin{aligned}
             &|x_u(t,\omega_{1})-x_u(t,\omega_{2})|\leq 
(1+\|u\|_{L^{\infty}})\theta_f(\rho_\Omega(\omega_1,\omega_2)) \\
&\hspace{3.5cm}+\left(k_0+\|u\|_{L^{\infty}}\sum_{i=1}^{m}k_i\right)\int_{t_0}^{t}|x_u(\sigma,\omega_{1})-x_u(\sigma,\omega_{2})|\,d\sigma.
         \end{aligned}
     \end{equation*} 
 From Grönwall's inequality, we have that 
 \begin{equation*}
 \begin{aligned}
     |x_u(t,\omega_{1})-x_u(t,&\omega_{2})|\leq \\
&(1+\|u\|_{L^{\infty}})e^{(t-t_0) \left(k_0+\|u\|_{L^{\infty}}\sum_{i=1}^{m}k_i\right)}\theta_f(\rho_\Omega(\omega_1,\omega_2)),
\end{aligned}
 \end{equation*}
 and the continuity of the map $\omega \mapsto x_u(\cdot,\omega)$ follows.
 
 {\it (iii)} Fix $\omega\in\Omega$ and consider two different controls $u,v\in{\mathcal{U}}_{\rm ad}[t_0,T]%\subset L^2(t_0,T;\mathbb{R}^m)
 $
 with their respective trajectories $x_{u}
(\cdot,\omega)$ and $x_{v}(\cdot,\omega).$ From \eqref{trajectory},  it follows that for all $t\in[t_0,T]$ 
  \begin{equation*}
         \begin{aligned}
             &|x_{u}(t,\omega)-x_{v}(t,\omega)|\leq \int_{t_0}^{t}\bigg(|f_0(x_{u}(\sigma,\omega),\omega)-f_0(x_{v}(\sigma,\omega),\omega)| \\
             &\hspace{3.5cm} +\sum_{i=1}^m|f_i(x_{u}(\sigma,\omega),\omega)u_i(\sigma)-f_i(x_{v}(\sigma,\omega),\omega)v_i(\sigma)|\bigg)\,d\sigma.
                        \end{aligned}
             \end{equation*}
    From \textbf{  (H0)}, {\it (i)} and  the Cauchy- Schwarz inequality, we have that
    \begin{equation*}\label{xuxv}
        \begin{aligned}
   |x_{u}(t,\omega)-x_{v}(t,\omega)|&\leq \left( k_0 +\|u\|_{L^{\infty}}\sum_{i=1}^mk_i\right)\int_{t_0}^{t}|x_{u}(\sigma,\omega)-x_{v}(\sigma,\omega)|\,d\sigma
             \\
&\hspace{1.4cm}+C\int_{t_0}^{t}\left(1+|(x_{v}(\sigma,\omega)|\right)|u_i(\sigma)-v_i(\sigma)|\,d\sigma
\\ &\leq C(t-t_0)^{1/2}\left(1+C_x\right)\|u-v\|_{L^{2}([t_0,T]:\mathbb{R}^m)}  \\
 &\ \ +\left( k_0+\|u\|_{L^{\infty}}\sum_{i=1}^mk_i\right)\int_{t_0}^{t}|x_{u}(\sigma,\omega)-x_{v}(\sigma,\omega)|\,d\sigma,
        \end{aligned}
    \end{equation*}
    then Grönwall's inequality leads us to 
{\small \begin{equation*}
        \begin{aligned}
 |x_{u}(t,\omega)-x_{v}(t,\omega)| 
 \leq C(t-t_0)^{1/2}\left(1+C_x\right)e^{(t-t_0)\left( k_0+\|u\|_{L^{\infty}}\sum_{i=1}^mk_i\right)}\|u-v\|_{L^{2}}.
        \end{aligned}
    \end{equation*}}
On the other hand, since $||u||_{L^2(t_0,t;\mathbb{R}^m)}\leq (t-t_0)^{1/2}||u||_{L^{\infty}(t_0,t;\mathbb{R}^m)},$ we have that
{\small  \begin{equation*}
        \begin{aligned}
 |x_{u}(t,\omega)-x_{v}(t,\omega)| 
 \leq C(t-t_0)\left(1+C_x\right)e^{(t-t_0)\left( k_0+\|u\|_{L^{\infty}}\sum_{i=1}^mk_i\right)}\|u-v\|_{L^{\infty}},
        \end{aligned}
    \end{equation*}}
       and the proof ends.
    \begin{flushright}
$\bf \square$
\end{flushright}

\begin{lemma}\label{lemma_xp}
For a given optimal process $(\bar u, \{\bar x(\cdot,\omega) : \omega \in \Omega \}),$ the following holds true,
\begin{enumerate}[i)]
    %\item $|\bar x(t,\omega)| \leq C_x$ for all $t\in[0,T]$, 
    \item[{\it i)}] there exists a positive constants $C_p$  such that, $|p(t,\omega)| \leq C_p$ for all $t\in[0,T]$ and  all $\omega\in \mathrm{supp}(\mu),$ %if $\exists c_i$ such that $|f_i'|\leq c_i$ for $i \in [0,\dots,m]$.
    \item[{\it ii)}] if $\varphi \equiv x_0,$ the map $\omega\mapsto p(\cdot,\omega)$ is continuous. 
\end{enumerate}
\end{lemma}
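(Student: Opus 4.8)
The plan is to exploit that, for each fixed $\omega\in\mathrm{supp}(\mu)$, the costate $p(\cdot,\omega)$ is the unique solution of the \emph{linear} terminal-value problem \eqref{pmp:adjoint} with datum $-p(T,\omega)=\nabla_x g(\bar x(T,\omega),\omega)$, so that both claims reduce to Grönwall estimates in which the $\omega$-dependence enters only through coefficients already controlled by \textbf{(H1)} and Lemma \ref{cont.traj}. For part \emph{i)}, write $A(t,\omega):=\nabla_x f(\bar x(t,\omega),\bar u(t),\omega)=\nabla_x f_0(\bar x(t,\omega),\omega)+\sum_{i=1}^m \bar u_i(t)\nabla_x f_i(\bar x(t,\omega),\omega)$; assumption \textbf{(H1)}-(iv) with $k=1$ together with $\bar u(t)\in\mathbf{U}$ (compact) gives $|A(t,\omega)|\leq L:=C_{f_0}^1+\|\bar u\|_{L^\infty}\sum_{i=1}^m C_{f_i}^1$ for all $t\in[0,T]$ and $\omega\in\mathrm{supp}(\mu)$, while $|p(T,\omega)|\leq C_g$ by \textbf{(H1)}-(ii). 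The integral form of \eqref{pmp:adjoint} then yields $|p(t,\omega)|\leq C_g+L\int_t^T|p(s,\omega)|\,ds$, and the backward Grönwall inequality gives $|p(t,\omega)|\leq C_g\,e^{LT}=:C_p$, a bound independent of $\omega$.

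For part \emph{ii)}, when $\varphi\equiv x_0$ the costate is genuinely a function of $\omega$, and by Lemma \ref{cont.traj}-(ii) the map $\omega\mapsto\bar x(\cdot,\omega)$ is continuous into $W^{1,1}([0,T];\mathbb{R}^n)$, hence uniformly on $[0,T]$. Subtracting the integral equations for $p(\cdot,\omega_1)$ and $p(\cdot,\omega_2)$, splitting $A(s,\omega_1)^\top p(s,\omega_1)-A(s,\omega_2)^\top p(s,\omega_2)=A(s,\omega_1)^\top\big(p(s,\omega_1)-p(s,\omega_2)\big)+\big(A(s,\omega_1)-A(s,\omega_2)\big)^\top p(s,\omega_2)$, and using part \emph{i)}, one gets
\[ |p(t,\omega_1)-p(t,\omega_2)|\leq \big|\nabla_x g(\bar x(T,\omega_1),\omega_1)-\nabla_x g(\bar x(T,\omega_2),\omega_2)\big|+C_p\!\int_t^T\!|A(s,\omega_1)-A(s,\omega_2)|\,ds+L\!\int_t^T\!|p(s,\omega_1)-p(s,\omega_2)|\,ds. \]
A further Grönwall argument then reduces the claim to showing that the first two terms tend to $0$ as $\rho_\Omega(\omega_1,\omega_2)\to0$. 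For the coefficient term I would bound $|A(s,\omega_1)-A(s,\omega_2)|$ by $\big(C_{f_0}^2+\|\bar u\|_{L^\infty}\sum_{i=1}^m C_{f_i}^2\big)\,|\bar x(s,\omega_1)-\bar x(s,\omega_2)|+\big|\nabla_x f(\bar x(s,\omega_2),\bar u(s),\omega_1)-\nabla_x f(\bar x(s,\omega_2),\bar u(s),\omega_2)\big|$, using \textbf{(H1)}-(iv) with $k=2$ for the Lipschitz dependence of $x\mapsto\nabla_x f$ on the tube (with constant uniform in $s,\omega$); the first summand tends to $0$ uniformly in $s$ by continuity of $\bar x$, the second by the modulus of continuity of $\omega\mapsto\nabla_x f$ that is uniform in $(t,x,u)$ from \textbf{(H1)}-(iii), and dominated convergence finishes it. For the terminal term I would split off the state argument and invoke the continuity of $\nabla_x g(\cdot,\omega)$ and of $\nabla_x g(x,\cdot)$ from \textbf{(H1)}-(ii) together with the continuity of $\bar x(T,\cdot)$. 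This yields $\sup_{t\in[0,T]}|p(t,\omega_1)-p(t,\omega_2)|\to0$, i.e. continuity of $\omega\mapsto p(\cdot,\omega)$ into $C([0,T];\mathbb{R}^n)$; continuity into $W^{1,1}$ then follows because $\dot p(\cdot,\omega)=-A(\cdot,\omega)^\top p(\cdot,\omega)$ converges in $L^1$ as well.

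The two Grönwall steps are routine; the point needing most care is the \emph{uniformity} of the various continuity moduli. The argument genuinely uses that $\nabla_x f$ is continuous in $x$ uniformly over the tubes $\{\bar x(\cdot,\omega)+\delta\mathbb{B}:\omega\in\mathrm{supp}(\mu)\}$ and continuous in $\omega$ uniformly in the other variables — precisely what the second-order bounds in \textbf{(H1)}-(iv) and the uniform $\omega$-continuity in \textbf{(H1)}-(iii) provide for $f$ — while for the terminal term one needs $(x,\omega)\mapsto\nabla_x g$ jointly continuous (equivalently, $\nabla_x g(\cdot,\omega)$ equicontinuous in $\omega$ near a given parameter), which can be secured either from a mild strengthening of \textbf{(H1)}-(ii) or, if $\mathrm{supp}(\mu)$ is compact, from compactness of $\{\bar x(T,\omega):\omega\in\mathrm{supp}(\mu)\}$ together with separate continuity. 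As an alternative packaging of both parts, one may write $p(t,\omega)=-\Phi_\omega(T,t)^\top\nabla_x g(\bar x(T,\omega),\omega)$, where $\Phi_\omega$ is the fundamental solution of $\dot\Phi=A(\cdot,\omega)\Phi$, and appeal to continuous dependence of linear flows on their coefficients.
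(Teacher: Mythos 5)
Your argument is essentially the paper's own proof: part \emph{i)} is the same Grönwall estimate on the integral form of the adjoint equation using \textbf{(H1)}-(ii),(iv) (the paper merely reverses time via $q(t,\omega):=p(T-t,\omega)$ first), and part \emph{ii)} uses the identical decomposition of the coefficient difference, the bound from part \emph{i)}, Lemma \ref{cont.traj}, \textbf{(H1)}-(iii),(iv), and a second Grönwall step. Your extra remark about needing joint (rather than merely separate) continuity of $\nabla_x g$ for the terminal term is a fair point of care that the paper passes over silently, but it does not change the route.
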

\begin{proof}
%The system dynamics is given by
%$$\dot x (t,\omega) =f_0(x(t,\omega),\omega) + \sum_{i=1}^m u_i(t)f_i(x(t,\omega),\omega)$$
%yielding
%$$\big|\bar x(t,\omega)\big| \leq \big|\bar x(0,\omega)\big| + \int_0^t \big|f_0(\bar x(t,\omega),\omega) + \sum_{i=1}^m u_i(t)f_i(\bar x(t,\omega),\omega)\big| ds,$$
%which in turn, in view of Assumption \ref{as:boundness}, results in
%\begin{equation}
%    \big|\bar x(t,\omega)\big| \leq %\big|\bar x(0,\omega)\big| + cte.
%\end{equation}
Let $p$ be as in Theorem \ref{pmp} or Theorem \ref{pmpInf}. By defining $q:=p(T-t)$ for $t\in[0,T]$ we can see that $q$ is solution of 
$$
\begin{cases}
 \dot q(t,\omega) = \left(\frac{\partial f}{\partial x}(\bar x(T-t,\omega), \bar u(T-t), \omega)\right)^{\top}q(t,\omega),\\ \hspace{5cm} \text{a.e} \, \,  t\in [0,T] \, \,  \text{and  for all }\omega \in \mathrm{supp}(\mu), \\
 q(0,\omega)=-\nabla_x g(\bar{x}(T,\omega),\omega), \text{ for all }\omega \in \mathrm{supp}(\mu).
\end{cases}
$$
Consequently, we have that for all $t\in[0,T]$
$$
q(t,\omega) = -\nabla_x g(\bar{x}(T,\omega),\omega)+\int_{T-t}^{T}\left(\frac{\partial f}{dx}(\bar x(\sigma,\omega), \bar u(\sigma), \omega)\right)^{\top}q(\sigma,\omega)\,d\sigma,
$$
then, from \textbf{ (H1)}-(iv) one has that
$$
|q(t,\omega)| \leq |\nabla_x g(\bar{x}(T,\omega),\omega)|+\left(C_{f_0}^1+\|\bar{u}\|_{L^{\infty}}\sum_{i=1}^mC_{f_i}^1\right)\int_{T-t}^T|q(\sigma,\omega)|\,d\sigma,
$$
now applying Grönwall's inequality results in
\begin{equation*}
|q(t,\omega)| \leq |\nabla_x g(\bar{x}(T,\omega),\omega)|e^{t\left(C_{f_0}^1+\|\bar{u}\|_{L^{\infty}}\sum_{i=1}^mC_{f_i}^1\right)}.
\end{equation*}
From \textbf{ (H1)}-(ii) the item {\it i)} follows.

To show item  {\it ii)} we argue as above to obtain  that, for all $\omega_1,\omega_2\in \Omega$ the following inequality holds
\begin{equation*}
\begin{aligned}
&|q(t,\omega_1)-q(t,\omega_2)|\leq |\nabla_x g(\bar{x}(T,\omega_1),\omega_1)-\nabla_x g(\bar{x}(T,\omega_2),\omega_2)|
\\
&+\int_{T-t}^{T}\left(|\nabla_x f(\bar x(\sigma,\omega_1), \bar u(\sigma), \omega_1)-\nabla_x f(\bar x(\sigma,\omega_2), \bar u(\sigma), \omega_2)|\,|q(\sigma,\omega_1)|\right.
\\
&\quad+\left.|\nabla_x f(\bar x(\sigma,\omega_1), \bar u(\sigma), \omega_1)|\,|q(\sigma,\omega_1)-q(\sigma,\omega_2)|\right)\,d\sigma,
\end{aligned}
\end{equation*}
since $\varphi \equiv x_0$, Lemma \ref{cont.traj} applies, ensuring the continuity of the map $\omega \mapsto \nabla_x g(\bar{x}(T, \omega), \omega)$. Thus, assumptions \textbf{(H1)}-(iii),(iv), and  {\it i)} together with the Grönwall's inequality lead us to the result.
\end{proof}
%\begin{flushright}$\bf \square$\end{flushright}

\section{Technical lemmas for uncertain initial condition}\label{AppA2} 
\begin{lemma}\label{x.ineq}
    Let us denote $x_{s, \varphi}$ as the solution of \eqref{trajectory} with initial time $s$ and data $\varphi.$ Let assumptions \textbf{  (H0)} hold. Then, for any $s$ and $\tau$ with $t_0 \leq s \leq \tau \leq T,$  $\varphi, \bar{\varphi} \in L^2(\mu,\Omega;\mathbb{R}^n)$, and any control $u \in \mathcal{U}_{ad}[t_0,T]$, the following inequalities hold
    $$\begin{aligned}
 &\textit{1.}) \
\|x_{s, \varphi}(t)\|_{L^ 2} \leq C(T)e^{c(t-s)}\left(%\mu(\Omega)^{1/2}
1+\|\varphi\|_{L^2} \right), \quad t \in[t_0, T] . \\ 
 &\textit{2.}) \ 
\|x_{s, \varphi}(t)-x_{s, \bar{\varphi}}(t)\|_{L^2}  \leq e^{k(t-s)}\|\varphi-\bar{\varphi}\|_{L^2} , \quad t \in[t_0, T].
\\ 
 &\textit{3.}) \ \|x_{\tau, \varphi}(t)-x_{s, \varphi}(t)\|_{L^2}  \leq C(T)e^{k(t-\tau)} \big(%\mu(\Omega)^{1/2}
 1+\|\varphi\|_{L^2} \big)(\tau-s), \quad  t \in[\tau, T].
\\ 
&\textit{4.}) \
\|x_{s, \varphi}(t)- x_{s, \varphi}(\tau)\|_{L^2}  \leq C(T)e^{c(t-s)} \big(%\mu(\Omega)^{1/2}
1+\|\varphi\|_{L^2} \big)|t-\tau|, \quad t \in[s, T] .
 \end{aligned}
 $$
\end{lemma}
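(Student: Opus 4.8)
The plan is to prove the four estimates in sequence, since each rests on the preceding one together with the growth and Lipschitz bounds \eqref{H'} derived from \textbf{(H0)}, and a repeated use of Grönwall's inequality in the Hilbert space $\Vspace$. All four follow the same template: write the relevant difference as an integral of $f$ along trajectories, take $\|\cdot\|_{L^2}$-norms inside the integral, invoke \eqref{H'}, and close with Grönwall.

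\medskip

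\noindent\textbf{Estimates 1 and 2.} For \textit{1.}) I would start from the integral equation \eqref{trajectory} written with initial time $s$,
\[
x_{s,\varphi}(t)=\varphi+\int_s^t f(x_{s,\varphi}(\sigma),u(\sigma))\,d\sigma,
\]
take $L^2$-norms, and apply the linear growth bound $\|f(\varphi,u)\|_{L^2}\le C_u(1+\|\varphi\|_{L^2})$ from \eqref{H'} to get $\|x_{s,\varphi}(t)\|_{L^2}\le \|\varphi\|_{L^2}+\int_s^t C_u(1+\|x_{s,\varphi}(\sigma)\|_{L^2})\,d\sigma$; absorbing the constant term and applying Grönwall yields the claimed bound with $c=C_u$ and $C(T)$ depending on $T$ and $\|u\|_{L^\infty}$ (which is uniformly bounded since $U(t)\subset\mathbf U$). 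For \textit{2.}), subtract the two integral equations for $x_{s,\varphi}$ and $x_{s,\bar\varphi}$ with the \emph{same} control $u$; the control-dependent terms do not cancel pointwise but the difference $f(x_{s,\varphi}(\sigma),u(\sigma))-f(x_{s,\bar\varphi}(\sigma),u(\sigma))$ is controlled by $K_u\|x_{s,\varphi}(\sigma)-x_{s,\bar\varphi}(\sigma)\|_{L^2}$ via the Lipschitz estimate in \eqref{H'}, so Grönwall immediately gives the exponential bound with $k=K_u$.

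\medskip

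\noindent\textbf{Estimates 3 and 4.} For \textit{4.}) (continuity in time), for $s\le\tau\le t$ write $x_{s,\varphi}(t)-x_{s,\varphi}(\tau)=\int_\tau^t f(x_{s,\varphi}(\sigma),u(\sigma))\,d\sigma$, bound the integrand by $C_u(1+\|x_{s,\varphi}(\sigma)\|_{L^2})$ and then by \textit{1.}) by $C_u C(T)e^{c(T-s)}(1+\|\varphi\|_{L^2})$, which is constant in $\sigma$; integrating over $[\tau,t]$ gives the factor $|t-\tau|$. For \textit{3.}) (continuity in the initial time), the key observation is that on $[\tau,T]$ the trajectory $x_{\tau,\varphi}$ with initial data $\varphi$ at time $\tau$ and the trajectory $x_{s,\varphi}$ satisfy the same ODE but with different "initial" values at time $\tau$: namely $x_{\tau,\varphi}(\tau)=\varphi$ while $x_{s,\varphi}(\tau)=\varphi+\int_s^\tau f(x_{s,\varphi}(\sigma),u(\sigma))\,d\sigma$. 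By \textit{4.}) (or directly by the growth bound and \textit{1.})) the discrepancy at time $\tau$ is $\|x_{s,\varphi}(\tau)-\varphi\|_{L^2}\le C(T)e^{c(\tau-s)}(1+\|\varphi\|_{L^2})(\tau-s)$; then apply \textit{2.}) with initial time $\tau$ and initial data $\varphi$ versus $x_{s,\varphi}(\tau)$ to propagate this discrepancy forward, picking up the factor $e^{k(t-\tau)}$.

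\medskip

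\noindent The estimates are routine and none presents a genuine obstacle; the only point requiring a little care is the bookkeeping in \textit{3.}), ensuring the two trajectories being compared do solve the \emph{same} evolution equation on $[\tau,T]$ with the \emph{same} control so that estimate \textit{2.}) is applicable, and tracking that the constants $C(T)$, $c$, $k$ can be chosen uniformly over $u\in\mathcal U_{\rm ad}$ because $\|u\|_{L^\infty}\le\sup_{w\in\mathbf U}|w|<\infty$. I would state this uniformity explicitly at the outset and then carry out the four Grönwall arguments in the order \textit{1.}), \textit{2.}), \textit{4.}), \textit{3.}).
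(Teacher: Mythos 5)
Your proposal is correct and follows exactly the route the paper intends: the paper's proof is a one-line appeal to "a standard application of Grönwall's Lemma" (citing Li--Yong, Ch.~6, Lemma~2.1), and your four Grönwall arguments, using the bounds \eqref{H'} and the uniform bound on $\|u\|_{L^\infty}$ from the compactness of $\textbf{U}$, are precisely that standard argument written out (the extra factor $e^{c(\tau-s)}$ appearing in your derivation of \textit{3.}) is harmless, as it is absorbed into $C(T)$).
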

\begin{proof}
It follows directly from a standard application of Grönwall's Lemma (see for instance \cite[Ch.6, Lemma 2.1]{liYong}). 
\end{proof}
%\begin{remark}For the sake of completeness, we note that the condition $\mu(\Omega) = 1$ above has been omitted.     \end{remark}
\begin{lemma}\label{filipschitz}
    Let $x\in C([t_0,T]:\Vspace)$ be a solution of \eqref{trajectory} for some control $u \in \mathcal{U}_{\rm ad}[t_0,T].$ Then, the maps $t\rightarrow f_i(x(t),\cdot)$ for all $i=0,...,m$ are Lipschitz continuous.   
\end{lemma}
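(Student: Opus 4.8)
The statement to prove is that for a solution $x\in C([t_0,T];\Vspace)$ of the integral equation \eqref{trajectory} driven by some $u\in\mathcal U_{\rm ad}[t_0,T]$, each map $t\mapsto f_i(x(t),\cdot)$, $i=0,\dots,m$, is Lipschitz continuous from $[t_0,T]$ into $\Vspace$. The plan is to combine the Lipschitz-in-state bound for $f_i$ coming from \textbf{(H0)} with the Lipschitz-in-time regularity of the trajectory $x$ itself, which is exactly item 4 of Lemma \ref{x.ineq} in this appendix.

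First I would fix $i\in\{0,\dots,m\}$ and take $s,t\in[t_0,T]$. Using the pointwise (in $\omega$) Lipschitz estimate \eqref{fLipschitz} from \textbf{(H0)}-(ii), namely $|f_i(y,\omega)-f_i(y',\omega)|\le k_i|y-y'|$ for all $y,y'\in\mathbb R^n$ and all $\omega\in\Omega$, I would write
\begin{equation*}
\|f_i(x(t),\cdot)-f_i(x(s),\cdot)\|_{L^2}^2=\int_\Omega |f_i(x(t,\omega),\omega)-f_i(x(s,\omega),\omega)|^2\,d\mu(\omega)\le k_i^2\int_\Omega |x(t,\omega)-x(s,\omega)|^2\,d\mu(\omega),
\end{equation*}
so that $\|f_i(x(t),\cdot)-f_i(x(s),\cdot)\|_{L^2}\le k_i\,\|x(t)-x(s)\|_{L^2}$. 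Then I would invoke Lemma \ref{x.ineq}, item 4, applied with initial time $s=t_0$ and data $\varphi$ equal to the actual initial condition of $x$, which gives $\|x(t)-x(s)\|_{L^2}\le C(T)e^{c(T-t_0)}(1+\|\varphi\|_{L^2})\,|t-s|$. Chaining the two inequalities yields a Lipschitz constant $L_i:=k_i\,C(T)e^{c(T-t_0)}(1+\|\varphi\|_{L^2})$, which is finite and independent of $t,s$, proving the claim.

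There is essentially no obstacle here: the only subtlety is making sure item 4 of Lemma \ref{x.ineq} applies to the given solution $x$, which is immediate since $x$ solves \eqref{trajectory} for the control $u$ and some fixed initial datum $\varphi\in\Vspace$, so $x=x_{t_0,\varphi}$ in the notation of that lemma. I would also note that $\|x(t)-x(s)\|_{L^2}$ refers to the $\Vspace$-norm of the difference of the Hilbert-space-valued function $x$ evaluated at two times, consistent with the continuity statement $x\in C([t_0,T];\Vspace)$, and that squaring the pointwise Lipschitz bound and integrating is legitimate because $|x(t,\cdot)-x(s,\cdot)|\in L^2(\mu)$ — indeed $x(t),x(s)\in\Vspace$. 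Hence the result follows with explicit constants, and the proof is essentially a two-line estimate once Lemma \ref{x.ineq} is in hand.
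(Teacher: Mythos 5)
Your proof is correct and follows exactly the same route as the paper: pass the pointwise Lipschitz bound \eqref{fLipschitz} through the $L^2(\mu)$-norm to get $\|f_i(x(t),\cdot)-f_i(x(s),\cdot)\|_{L^2}\leq k_i\|x(t)-x(s)\|_{L^2}$, then apply item 4 of Lemma \ref{x.ineq} to bound $\|x(t)-x(s)\|_{L^2}$ by a constant times $|t-s|$. Your version just spells out the constant explicitly; no differences of substance.
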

\begin{proof}
  Fix $s,t \in [t_0,T],$ then, from \eqref{fLipschitz} and Lemma \ref{x.ineq} -{\it 4.)} we have that 
  $$
  ||f_i(x(t),\cdot)-f_i(x(s),\cdot)||_{L^2}\leq k_i||x(t)-x(s) ||_{L^2}\leq C(T)|t-s|
  $$
  for all $i=0,...,m.$
\end{proof}

%\section{Proof of Lemma \ref{FJlemma}}

\subsection{Proof of Lemma \ref{FJlemma}}
 For all $ \psi \in \Vspace,$ with $\|\psi\|_{L^2}=1,$ we claim that

\begin{equation*}
\frac{|\mathcal{J}(\varphi+\varepsilon\psi)-\mathcal{J}(\varphi)-\langle\nabla_xg(\varphi(\cdot),\cdot),\varepsilon\psi(\cdot)\rangle|}{\varepsilon}
\end{equation*}
tends to $0$ as $\varepsilon\rightarrow 0.$ Indeed, the whole term above can be written as
\small{\begin{equation*}
\begin{aligned}
&\frac{1}{\varepsilon}\bigg| \int_{\Omega}\,[g(\varphi(\omega)+\varepsilon\psi(\omega),\omega)-g(\varphi(\omega),\omega) ]\,d\mu(\omega)-\langle\nabla_xg(\varphi(\cdot),\cdot),\varepsilon\psi(\cdot)\rangle\bigg| \\
&=\frac{1}{\varepsilon}\bigg| \int_{\Omega} \int_{0}^{1}\frac{d}{d\sigma}g(\varphi(\omega)+\sigma\varepsilon\psi(\omega),\omega) \,d\sigma \,d\mu(\omega)-\langle\nabla_xg(\varphi(\cdot),\cdot),\varepsilon\psi(\cdot)\rangle\bigg| \\
&=\frac{1}{\varepsilon}\bigg| \int_{\Omega} \int_{0}^{1} \nabla_x g(\varphi(\omega)+\sigma\varepsilon\psi(\omega),\omega)\cdot \varepsilon\psi(\omega)  \,d\sigma -\nabla_xg(\varphi(\omega),\omega)\cdot\varepsilon\psi(\omega)\,d\mu(\omega)\bigg|\\
&= \bigg|\int_{\Omega} \int_{0}^{1} \nabla_x g(\varphi(\omega)+\sigma\varepsilon\psi(\omega),\omega)\cdot \psi(\omega)  \,d\sigma-\nabla_xg(\varphi(\omega),\omega)\cdot\psi(\omega)\,d\mu(\omega)\bigg|,
\end{aligned}
\end{equation*}}
\normalsize since $\nabla_x g(\cdot,\cdot) $ is continuous, the claim follows, and \eqref{FréchetJ} holds true.
\begin{flushright}
$\bf \square$
\end{flushright}

%\begin{appendices}

%%=============================================%%
%% For submissions to Nature Portfolio Journals %%
%% please use the heading ``Extended Data''.   %%
%%=============================================%%

%%=============================================================%%
%% Sample for another appendix section			       %%
%%=============================================================%%

%% \section{Example of another appendix section}\label{secA2}%
%% Appendices may be used for helpful, supporting or essential material that would otherwise 
%% clutter, break up or be distracting to the text. Appendices can consist of sections, figures, 
%% tables and equations etc.

\end{appendices}

\section*{Acknowledgements} The authors acknowledge the support of the Brazilian agencies FAPERJ (Rio de Janeiro)  for processes E-28/201.346/2021, 210.037/2024, SEI 260003/000175/2024
and  CAPES 88887.488134/2020-00 and CNPq 312407/2023-8.
\\

\noindent
{\bf Data Availability.} It does not apply to this article as no new data were created or analyzed in
this study.
\\

\noindent
{\bf Conflict of interest.} This work does not have any conflicts of interest.

%%===========================================================================================%%
%% If you are submitting to one of the Nature Portfolio journals, using the eJP submission   %%
%% system, please include the references within the manuscript file itself. You may do this  %%
%% by copying the reference list from your .bbl file, paste it into the main manuscript .tex %%
%% file, and delete the associated \verb+\bibliography+ commands.                            %%
%%===========================================================================================%%

%\printbibliography
%\bibliography{sn-bibliography}% common bib file
%% if required, the content of .bbl file can be included here once bbl is generated
%%\input sn-article.bbl

\end{document}